 
\documentclass[11pt, a4paper, oneside, fleqn]{article}
\usepackage{verbatim}
\usepackage[utf8]{inputenc}
\usepackage[british]{babel}
\usepackage[english]{varioref}
\usepackage[square, sort, numbers]{natbib}
\usepackage{comment}
\usepackage{amsmath,amsthm,amssymb}
\usepackage{mathrsfs}
\usepackage{multicol}
\usepackage{multirow}
\usepackage{enumerate}
\usepackage{longtable}
\usepackage{float}
\usepackage[plain]{algorithm}
\usepackage{algpseudocode}
\usepackage{tikz}


\includecomment{comment:upperBound}

\includecomment{comment:kBPP:solutionAlgorithm}

\includecomment{comment:kBPP:optimalityProof}

\includecomment{comment:kBPP:objectiveValue}

\includecomment{comment:kBPP:approximationRatio}

\includecomment{comment:appendix}

\makeatletter
\newenvironment{breakablealgorithm}
  {
   \begin{center}
     \refstepcounter{algorithm}
     \renewcommand{\caption}[2][\relax]{
       {{\ALG@name~\thealgorithm:} ##2\par}%
       \ifx\relax##1\relax 
         \addcontentsline{loa}{algorithm}{\protect\numberline{\thealgorithm}##2}%
       \else 
         \addcontentsline{loa}{algorithm}{\protect\numberline{\thealgorithm}##1}%
       \fi
     }
  }{
   \end{center}
  }
\makeatother

\algrenewcommand\Return{\State \algorithmicreturn{} }%

\newenvironment{tikzpictureGuestGraph}
	{
		\begin{tikzpicture}[
			xscale=1.40, yscale=0.72,
			node/.style={circle, draw=black!100, fill=white!100, thick, inner sep=0pt, minimum size=7.3mm},
			nodeBlack/.style={circle, draw=black!100, fill=black!50, thick, inner sep=0pt, minimum size=7.3mm},
			nodeRed/.style={circle, draw=black!100, fill=red!50, thick, inner sep=0pt, minimum size=7.3mm},
			nodeBlue/.style={circle, draw=black!100, fill=blue!50, thick, inner sep=0pt, minimum size=7.3mm},
			nodeGreen/.style={circle, draw=black!100, fill=green!50, thick, inner sep=0pt, minimum size=7.3mm},
			nodeCyan/.style={circle, draw=black!100, fill=cyan!50, thick, inner sep=0pt, minimum size=7.3mm},
			nodeMagenta/.style={circle, draw=black!100, fill=magenta!50, thick, inner sep=0pt, minimum size=7.3mm},
			nodeYellow/.style={circle, draw=black!100, fill=yellow!50, thick, inner sep=0pt, minimum size=7.3mm},
			nodeGrey/.style={circle, draw=black!100, fill=black!25, thick, inner sep=0pt, minimum size=7.3mm},
			nodeBlackDashed/.style={circle, dashed, draw=black!100, fill=black!50, thick, inner sep=0pt, minimum size=7.3mm},
			nodeRedDashed/.style={circle, dashed, draw=black!100, fill=red!50, thick, inner sep=0pt, minimum size=7.3mm},
			nodeBlueDashed/.style={circle, dashed, draw=black!100, fill=blue!50, thick, inner sep=0pt, minimum size=7.3mm},
			nodeGreenDashed/.style={circle, dashed, draw=black!100, fill=green!50, thick, inner sep=0pt, minimum size=7.3mm},
			nodeCyanDashed/.style={circle, dashed, draw=black!100, fill=cyan!50, thick, inner sep=0pt, minimum size=7.3mm},
			nodeMagentaDashed/.style={circle, dashed, draw=black!100, fill=magenta!50, thick, inner sep=0pt, minimum size=7.3mm},
			nodeYellowDashed/.style={circle, dashed, draw=black!100, fill=yellow!50, thick, inner sep=0pt, minimum size=7.3mm},
			nodeGreyDashed/.style={circle, dashed, draw=black!100, fill=black!25, thick, inner sep=0pt, minimum size=7.3mm}
		]
	}
	{
		\end{tikzpicture}
	}

\newenvironment{tikzpictureHostGraph}
	{
		\begin{tikzpicture}[
			xscale=1.40, yscale=0.68,
			leafUsed/.style={circle, draw=black!100, fill=black!50, thick, inner sep=0pt, minimum size=7.3mm},
			leafUsedBlack/.style={circle, draw=black!100, fill=black!50, thick, inner sep=0pt, minimum size=7.3mm},
			leafUsedRed/.style={circle, draw=black!100, fill=red!50, thick, inner sep=0pt, minimum size=7.3mm},
			leafUsedBlue/.style={circle, draw=black!100, fill=blue!50, thick, inner sep=0pt, minimum size=7.3mm},
			leafUsedGreen/.style={circle, draw=black!100, fill=green!50, thick, inner sep=0pt, minimum size=7.3mm},
			leafUsedCyan/.style={circle, draw=black!100, fill=cyan!50, thick, inner sep=0pt, minimum size=7.3mm},
			leafUsedMagenta/.style={circle, draw=black!100, fill=magenta!50, thick, inner sep=0pt, minimum size=7.3mm},
			leafUsedYellow/.style={circle, draw=black!100, fill=yellow!50, thick, inner sep=0pt, minimum size=7.3mm},
			leafUsedGrey/.style={circle, draw=black!100, fill=black!25, thick, inner sep=0pt, minimum size=7.3mm},
			leafUsedBlackDashed/.style={circle, dashed, draw=black!100, fill=black!50, thick, inner sep=0pt, minimum size=7.3mm},
			leafUsedRedDashed/.style={circle, dashed, draw=black!100, fill=red!50, thick, inner sep=0pt, minimum size=7.3mm},
			leafUsedBlueDashed/.style={circle, dashed, draw=black!100, fill=blue!50, thick, inner sep=0pt, minimum size=7.3mm},
			leafUsedGreenDashed/.style={circle, dashed, draw=black!100, fill=green!50, thick, inner sep=0pt, minimum size=7.3mm},
			leafUsedCyanDashed/.style={circle, dashed, draw=black!100, fill=cyan!50, thick, inner sep=0pt, minimum size=7.3mm},
			leafUsedMagentaDashed/.style={circle, dashed, draw=black!100, fill=magenta!50, thick, inner sep=0pt, minimum size=7.3mm},
			leafUsedYellowDashed/.style={circle, dashed, draw=black!100, fill=yellow!50, thick, inner sep=0pt, minimum size=7.3mm},
			leafUsedGreyDashed/.style={circle, dashed, draw=black!100, fill=black!25, thick, inner sep=0pt, minimum size=7.3mm},
			leafUnused/.style={circle, draw=black!100, fill=white!100, thick, inner sep=0pt, minimum size=7.3mm},
			otherNode/.style={circle, draw=black!50, fill=white!100, thick, inner sep=0pt, minimum size=5mm}
		]
	}
	{
		\end{tikzpicture}
	}

\algnewcommand\algorithmicsubroutine{\textbf{\large Subroutine: }}
\algnewcommand\Subroutine{\item[\algorithmicsubroutine]}
\algnewcommand\algorithmicmain{\textbf{\large Main: }}
\algnewcommand\Main{\item[\algorithmicmain]}
\algnewcommand\Andoperator{\textbf{and }}
\algrenewcommand\Return{\State \algorithmicreturn{} }
\algnewcommand\algorithmicto{\textbf{to}}
\algrenewtext{For}[2]{\algorithmicfor\ #1 \algorithmicto\ #2 \algorithmicdo}
\algnewcommand\algorithmicdownto{\textbf{downto}}
\algblockdefx[Loop]{Fordownto}{EndFor}[3][]{\algorithmicfor\ #2 \algorithmicdownto\ #3 \algorithmicdo}{\algorithmicend\ \algorithmicfor}
\algnewcommand{\LineComment}[1]{\State \(\triangleright\) #1}

\usetikzlibrary{petri,decorations.pathreplacing,decorations.pathmorphing}

\textheight=20.25truecm
\textwidth=14.75truecm
\hoffset=-1.00truecm
\voffset=0.25truecm

\begin{document}

\newtheorem{axiom}{Axiom}
\newtheorem{conjecture}[axiom]{Conjecture}
\newtheorem{corollary}[axiom]{Corollary}
\newtheorem{definition}[axiom]{Definition}
\newtheorem{example}[axiom]{Example}
\newtheorem{lemma}[axiom]{Lemma}
\newtheorem{observation}[axiom]{Observation}
\newtheorem{proposition}[axiom]{Proposition}
\newtheorem{theorem}[axiom]{Theorem}

\newcommand{\rz}{{\mathbb{R}}}
\newcommand{\nz}{{\mathbb{N}}}
\newcommand{\zz}{{\mathbb{Z}}}
\newcommand{\eps}{\varepsilon}
\newcommand{\cei}[1]{\left\lceil #1\right\rceil}
\newcommand{\flo}[1]{\left\lfloor #1\right\rfloor}
\newcommand{\seq}[1]{\langle #1\rangle}

\newcommand{\toDo}{{\fontencoding{OT1}\fontfamily{cmtt}\fontseries{m}\fontshape{sc}\fontsize{12}{16pt}\selectfont \#\#\# to do \#\#\#}}
\newcommand{\toSolve}{{\fontencoding{OT1}\fontfamily{cmtt}\fontseries{m}\fontshape{sc}\fontsize{12}{16pt}\selectfont \#\#\# to solve \#\#\#}}
\newcommand{\reference}{{\fontencoding{OT1}\fontfamily{cmtt}\fontseries{m}\fontshape{sc}\fontsize{12}{16pt}\selectfont \#\#\# reference \#\#\#}}
\newcommand{\otherComment}[1]{{\fontencoding{OT1}\fontfamily{cmtt}\fontseries{m}\fontshape{sc}\fontsize{12}{16pt}\selectfont \#\#\# #1 \#\#\#}}

\newcommand{\qap}{\mbox{\sc QAP}}
\newcommand{\minqap}{\mbox{$\min$-{\sc QAP}}}
\newcommand{\maxqap}{\mbox{$\max$-{\sc QAP}}}
\newcommand{\onion}{\mbox{\sc Onion}}
\newcommand{\ocone}{\mbox{\sc OnionCone}}
\newcommand{\aaa}{\alpha}
\newcommand{\minctv}{\mbox{$\min$-{\sc CTV}}}
\newcommand{\maxctv}{\mbox{$\max$-{\sc CTV}}}
\newcommand{\cbar}{\overline{C}{}}
\newcommand{\var}{\mbox{\sc Var}}
\newcommand*{\defeq}{\mathrel{\vcenter{\baselineskip0.5ex \lineskiplimit0pt
                     \hbox{\scriptsize.}\hbox{\scriptsize.}}}%
                     =}
\newcommand*{\eqdef}{=\mathrel{\vcenter{\baselineskip0.5ex \lineskiplimit0pt
                     \hbox{\scriptsize.}\hbox{\scriptsize.}}}%
                     }

\makeatletter
	\newenvironment{figurehere}
		{\def\@captype{figure}}
		{}
\makeatother

\title{\bf The data arrangement problem on binary trees}
\author{
	\sc Eranda \c{C}ela{\footnotemark[1]}
	\and
	\sc Joachim Schauer{\footnotemark[7]}
	\and
	\sc Rostislav Stan\v{e}k{\footnotemark[7]}
}
\date{}
\maketitle
\renewcommand{\thefootnote}{\fnsymbol{footnote}}
\footnotetext[1]{
	{\tt cela@opt.math.tu-graz.ac.at}. 
	Department of Optimization and Discrete Mathematics, Graz University of Technology, Steyrergasse~30, A-8010 Graz, Austria
}
\footnotetext[7]{
	{\tt \{joachim.schauer, rostislav.stanek\}@uni-graz.at}.
	Department of Statistics and Operations Research, University of Graz, Universitaetsstrasse~15, A-8010 Graz, Austria}	

\begin{abstract}
	The {\em data arrangement problem on regular trees} ({\em DAPT}) consists in assigning the vertices of a  given graph $G$, called the guest graph,  to the leaves of a $d$-regular tree $T$, called the host graph, such that the sum of the pairwise distances of all pairs of leaves in $T$ which correspond to the edges of $G$ is minimised. {\sc Luczak} and {\sc Noble}~\cite{LuczakNoble:OptimalArrangementOfDataInATreeDirectory} have shown that this problem is ${\cal NP}$-hard for every fixed $d \geq 2$. In this paper we show  that the DAPT remains ${\cal NP}$-hard even if the guest graph is a tree, an issue which was posed as  an open question in \cite{LuczakNoble:OptimalArrangementOfDataInATreeDirectory}.
	
	We deal with a special case of the DAPT where both the guest and the host graph are binary regular trees and  provide a $1.015$-approximation algorithm for special case. The solution produced by the algorithm and the corresponding value of the objective function are given in closed form. The analysis of the approximation algorithm involves an auxiliary problem which is interesting on its own, namely the {\em $k$-balanced partitioning problem} ({\em $k$-BPP}) for  binary regular trees and particular choices of $k$. We derive a lower bound for the later problem and obtain a lower bound for the original problem by solving $h_G$ instances of the $k$-BPP, where $h_G$ is the height of the host graph $G$.
	
	\medskip\noindent\emph{Keywords.}
	Graph embedding; data arrangement problem; approximation algorithm; partitioning; $k$-balanced partitioning problem; binary trees
\end{abstract}
\medskip

\section{Introduction}
	\label{section:introduction}
	Given an undirected graph $G = \big(V(G), E(G)\big)$ with $\big|V(G)\big| = n$, an undirected graph 
$H = \big(V(H), E(H)\big)$
 with $|V(H)| \geq n$ and some subset $B$ of the vertex set of $H$, $B \subseteq V(H)$ with $|B| \geq n$, the
 {\bf generic graph embedding problem (GEP}) consists of finding an injective embedding of the vertices of $G$ 
into the vertices 
in $B$ such that some prespecified objective function is minimised. Throughout this paper we will call $G$ 
{\bf the guest graph} 
and $H$ {\bf the host graph}. A commonly used objective function maps an embedding $\phi\colon V(G) \to B$ to
	\begin{equation}
		\sum_{(i, j) \in E(G)}{d\big(\phi(i), \phi(j)\big)},
	\end{equation}
	where $d(x, y)$ denotes the length of the shortest path between $x$ and $y$ in $H$. The host graph $H$ may 
be a weighted 
or a non-weighted graph; in the second case the path lengths coincide with the respective number of edges. 
Given a non-negative
 number $A \in \rz$, the decision version of the GEP asks whether there is an injective embedding $\phi\colon V(G) \to B$ 
such that 
the objective function does not exceed $A$.
	
	Different versions of the GEP have been studied in the literature; the {\bf linear arrangement problem}, 
where the guest graph is a one dimensional equidistant grid with $n$ vertices is probably the most prominent among them 
(see {\sc Chung}~\cite{Chung:AnOptimalLinearArrangementOfTrees}, {\sc Juvan} and 
{\sc Mohar}~\cite{JuvanMohar:OptimalLinearLabelingsAndEigenvaluesOfGraphs}, 
{\sc Shiloach}~\cite{Shiloach:AMinimumLinearArrangementAlgorithmForUndirectedTrees}).

	This paper deals with the version of the GEP where the guest graph $G$ has $n$ vertices, the host graph $H$ is a
 complete $d$-regular tree of height $\lceil\log_d{n}\rceil$ and the set $B$ consists of the leaves of $H$. From now on 
 we will denote the host graph by $T$. The height of $T$ as specified above guarantees that the number $|B|$ of leaves 
fulfills $|B| \geq n$ and that the number of the direct predecessors of the leaves in $T$ is smaller than $n$. 
Thus $\lceil\log_d{n}\rceil$ is the smallest height of a $d$-regular tree which is able to accommodate an injective
 embedding of the vertices of the guest graph on its leaves. This problem is originally motivated by real problems in 
communication systems and was first posed by {\sc Luczak} and 
{\sc Noble}~\cite{LuczakNoble:OptimalArrangementOfDataInATreeDirectory}. 

We call the above described  version of the GEP the {\bf data arrangement problem on regular trees (DAPT)}. {\sc Luczak} and
 {\sc Noble}~\cite{LuczakNoble:OptimalArrangementOfDataInATreeDirectory} have shown that the DAPT is ${\cal NP}$-hard for every 
fixed $d \geq 2$ and have posed as an open question the computational complexity of the DAPT in the case where the guest graph is a tree.
In this paper we answer this question and show that this particular case of the problem is ${\cal NP}$-hard for every $d\ge 2$.
 In the special case where both the guest graph $G$ and the host graph $T$ are binary regular trees we give a $1.015$-approximation 
algorithm.  
\smallskip
	
	The paper is organised as follows. Section~\ref{section:notationsAndGeneralPropertiesOfTheDAPT} discusses some general 
properties of the problem and introduces the notation used throughout the paper. Section~\ref{section:solutionAlgorithm} 
presents an algorithm for the DAPT on regular binary trees,  where the guest graph is also a regular binary tree. 
Section~\ref{section:approximationRatio:kBalancedPartitioningProblem}  deals with the {\em $k$-balanced partitioning problem} 
($k$-BPP) in binary regular trees. This version of the $k$-BPP serves as an auxiliary problem 
in the sense that it  leads to  a lower bound for the objective function value of the DAPT on regular binary trees. 
In Section~\ref{section:approxratio}
we  use the  auxiliary problem and the lower bound mentioned above to analyze  the algorithm presented 
in Section~\ref{section:solutionAlgorithm} and show that the latter is an  $1.015$-approximation  algorithm. 
In Section~\ref{section:complexity} it is proven that the DAPT is ${\cal NP}$-hard for every $d\ge 2$ even if the guest graph 
is a tree.
Finally, in Section~\ref{section:finalNotesConclusionsAndOutlook} we  provide some final notes, conclusions and questions for
 further research.

\section{Notations and general properties of the DAPT}
	\label{section:notationsAndGeneralPropertiesOfTheDAPT}
	First, we formally define a {\bf {\em d}-regular tree} as follows:
	\begin{definition}[{\bf {\em d}-regular tree}]
		\label{definition:dRegularTree}
		A tree $T = \big(V(T), E(T)\big)$ is called a {\bf {\em d}-regular tree},
 $d \in \nz$, $d \geq 2$, if
		\begin{enumerate}
			\item it contains a specific vertex $v_1 \in V$ of degree $d$ which is called the {\bf root} of $T$ 
and is also denoted  by $r(T)$ ,
			\item every vertex but the leaves and the root has degree $d + 1$ and
			\item there is a number $h \in \nz$ such that the length $d(l, v_1)$ of the path between the root $v_1$ and a
 leaf $l$ equals $h$ for every leaf $l$ of $T$.
		\end{enumerate}
		The number $h$ is called the {\bf height} of the tree $T$, and is also denoted by $h(T)$. For every vertex $v \in V \backslash \{v_1\}$, i.e. for any 
vertex $v$ but the root $v_1$, the unique neighbour of $v$ in the path between $v_1$ and $v$ in $T$ is called the {\bf father} of $v$. 
All other neighbours of $v$ (if any) are called the {\bf children of $v$}. The neighbours of the root $v_1$ are called {\bf children 
of $v_1$}. The {\bf level of a vertex} $v$, denoted by $\mbox{level}(v)$, is the length (i.e. the number of edges) of the unique path 
joining $v$ and the root $v_1$ of the tree. Thus in a $d$-regular tree of height $h$ the level of each leaf equals $h$, whereas the 
level of the root $v_1$ equals $0$. All vertices $w$, $w \neq v$, of the unique path joining $v$ and the root $v_1$ of the tree are 
called {\bf ancestors of $v$}. Given two vertices $v$ and $u$ their {\bf most recent common ancestor $w$} is their common ancestor 
with the highest level, i.e. $w = \mbox{argmax}\{\mbox{level}(t)\colon t \text{ is a common ancesteor of $v$ and $u$}\}$.
		
	A {\bf subtree of {\em k}-th order} of a $d$-regular tree $T$ is a $d$-regular subtree $T^\prime$
of $T$ of  height $h(T^\prime) = h(T) - k$, rooted at some vertex of level $k$ in $T$. A subtree of first order  will be  
called a
{\bf basic subtree}.
	\end{definition}

	Consider a guest graph $G = (V, E)$ with $n$ vertices, and a host graph $T$ which is a $d$-regular tree of height $h$, 
$h \defeq \lceil \log_d{n} \rceil$. Let $B$ be the set of leaves of $T$. Notice that due to the above choice of $h$ we get the
 following 
upper bound for the number $b = |B|$ of leaves:
	\begin{equation}
		\label{equation:b}
		b \defeq |B| = d^h = d^{h-1} d < nd.
	\end{equation}
	
	\begin{definition}[{\bf data arrangement problem on regular trees}]
		\label{definition:dAPT}
		Given a guest graph $G = (V, E)$ with $|V|=n$ and a host graph $T$ which is a $d$-regular tree with set of 
leaves $B$ and height 
equal to $\lceil \log_d{n} \rceil$, an
 {\bf arrangement} is an injective mapping $\phi\colon V \to B$. The {\bf data arrangement problem on regular trees (DAPT)} 
asks for an arrangement $\phi$ that minimises the objective value $OV(G, d, \phi)$
		\begin{equation}
			\label{equation:definition:dAPT:oV}
			OV(G, d, \phi) \defeq \sum_{(u, v) \in E}{d_T\big(\phi(u), \phi(v)\big)},
		\end{equation}
		where $d_T\big(\phi(u), \phi(v)\big)$ denotes the length of the unique $\phi(u)$-$\phi(v)$-path in the $d$-regular
 tree $T$. Such an arrangement is called an {\bf optimal arrangement}. The corresponding value of the objective functions is called the {\bf optimal value} of the problem.  An instance of the DAPT is fully determined by the guest graph 
and the parameter $d$ of the $d$-regular tree $T$ which serves as  a host graph. Such an instance of the problem will be denoted by 
$DAPT(G, d)$ and its optimal value will be denoted by $OPT(G,d)$.
	\end{definition}
	
	\begin{theorem}
		\label{theorem:dAPTIsNPHard}
		The DAPT is ${\cal NP}$-hard for every fixed $d \geq 2$.
		
		\begin{proof}
			See {\sc Luczak} and {\sc Noble}~\cite{LuczakNoble:OptimalArrangementOfDataInATreeDirectory}.
		\end{proof}
	\end{theorem}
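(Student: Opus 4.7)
The plan is to prove $\mathcal{NP}$-hardness by polynomial reduction from a known $\mathcal{NP}$-hard balanced partitioning problem --- \textsc{Minimum Bisection} for $d=2$ and its balanced $d$-partition analogue for $d\geq 3$. The starting point is a structural observation: since $T$ is a tree, for any arrangement $\phi$ the objective value decomposes over the edges of $T$ as
\begin{equation*}
OV(G,d,\phi) \;=\; \sum_{e\in E(T)} c_e(\phi),
\end{equation*}
where $c_e(\phi)$ counts those edges $(u,v)\in E(G)$ whose unique $\phi(u)$-$\phi(v)$-path in $T$ contains $e$. Grouping by level, the contribution of level $\ell$ equals the number of edges of $G$ whose endpoints are mapped to distinct subtrees of $\ell$-th order, so every arrangement induces a balanced partition of the guest vertices at each level, and the objective is the sum over all levels of the numbers of $G$-edges cut by the corresponding partition.

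Starting from an instance $(G,k)$ of \textsc{Minimum Bisection} (or, for $d\geq 3$, of balanced $d$-partition), I would pad $V(G)$ with isolated vertices so that $|V|$ becomes $d^h$ for some $h\in\nz$, and take the padded graph as the guest of a DAPT instance with parameter $d$. By the above decomposition, the contribution of level $\ell=1$ to $OV$ is exactly the number of $G$-edges crossing the balanced $d$-partition induced on the leaves; hence any arrangement with sufficiently small objective must, in particular, realise a good top-level cut, from which a low-weight bisection of $G$ can be read off.

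The main obstacle is that $OV$ aggregates cut values across \emph{all} levels of $T$, not only the top one, so the contributions from deeper levels must be prevented from drowning the combinatorics of the top cut. A natural way to control them is to attach small gadget subgraphs to the padding vertices whose cost contribution at every level below the top is identical for all sufficiently balanced arrangements, so that only the level-$1$ term depends non-trivially on the bisection of $G$. One then chooses a threshold for $OV$ such that an arrangement meets it if and only if the induced bisection of $G$ has weight at most $k$. Verifying that a single such gadget family works uniformly for every fixed $d\geq 2$ is the combinatorial heart of the argument, and this is essentially the route carried out by \textsc{Luczak} and \textsc{Noble} in \cite{LuczakNoble:OptimalArrangementOfDataInATreeDirectory}, to which we refer for the complete construction.
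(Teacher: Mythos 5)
The paper offers no argument for this theorem at all: the result is imported from Luczak and Noble and the ``proof'' is a bare citation, so there is nothing in the paper to compare your reduction against in detail. Your opening observation is sound and is in fact the same decomposition the paper later uses in Section~5: writing $OV(G,d,\phi)=\sum_{e\in E(T)}c_e(\phi)$ and grouping the host edges by depth recovers $OV=2\sum_i s_i(\phi)$, where $s_i$ counts guest edges cut by the induced balanced partition at the corresponding level (note the factor $2$ --- each such guest edge crosses two host edges at that depth, one on the way up and one on the way down --- which your sketch drops).

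As a proof, however, the proposal has a genuine gap, and it is precisely the step you label the ``combinatorial heart'' and then defer back to the reference. Two concrete problems. First, padding with \emph{isolated} vertices destroys the correspondence between the top-level cut and a bisection of the original instance: the arrangement is free to distribute the original vertices of $G$ arbitrarily among the $d$ basic subtrees, so nothing forces the induced partition of $V(G)$ itself to be balanced; indeed, whenever $|V(G)|\le d^{h-1}$ the whole of $G$ fits into one basic subtree and the level-one contribution is $0$ irrespective of the bisection width of $G$. Second, the contributions of the deeper levels are themselves recursive instances of the same arrangement problem inside each part, and they interact with the choice of top-level partition; asserting that some gadget family makes them ``identical for all sufficiently balanced arrangements'' is exactly the claim that needs a construction and a verification, neither of which is supplied, and no threshold value is computed. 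Since the argument ends by referring to Luczak and Noble for ``the complete construction,'' the proposal in effect reduces to the same citation the paper gives, with an outline wrapped around it whose essential step is missing. (For a model of how such a reduction is actually closed --- padding with \emph{star} gadgets whose optimal placement is rigid, plus exact formulas for their contribution --- see the paper's own ${\cal NP}$-hardness proof in Section~6, which reduces from Numerical Matching with Target Sums rather than from Minimum Bisection.)
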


\begin{example}
		\label{example:dataArrangementProblemOnRegularTrees}	
	A  guest graph $G$ of height $3$ is shown in  
Figure~\ref{figure:guestGraphGForHG3DepictingTheArrangementPhi}. 
Figure  \ref{figure:guestGraphGForHG3DepictingTheArrangementPhiA} represnets the same guest graph $G$,
 but with another colouring of its vertices; the role of the colouring will be explained below. 
Figures  \ref{figure:arrangementPhiForHG3} and \ref{figure:arrangementPhiUForHG3} depict  a feasible $\phi$
arrangement and an optimal arrangement $\phi_A$ of $G$, yielding  the objective function values 
$OV(G, 2, \phi) = 58$ and $OV(G, 2, \phi_A) = 56$, respectively. 

Note that the labels in the vertices of the guest graphs denote the index of the  vertices 
 in the so-called   canonical ordering (defined below). 
The labels of the leaves  in the host graphs represent the arrangement: the label of each leaf coincides 
with the index of the vertex arranged at that leaf (in the canocinal ordering). 

 The colours should help to capture some properties of the arrangement at a glance: the set of vertices 
of a certain colour in  the guest graph is arranged at the set of leaves of the same colour in the host 
graph $T$. Moreover,  some of the vertices  in the guest graph have a dashed boundary,
 the others have a solid boundary. The graphical representation of an arrangement preserves  the boundary property, in the sense that  
vertices with  a dashed boundary in $G$ are arranged at  dashed-boundary  leaves of the same colour in $T$. 
	\end{example}
	
\begin{definition}[{\bf canonical order}]	
The  canonical order of the vertices of the guest graph and the canonical order of the leaves of the host 
graph are defined 
recursively as follows.

\begin{itemize}
\item[(a)] 
 The {\bf canonical order of the leaves of a {\em d}-regular tree  $T$} is an arbitrary but fixed order 
if $h(T)=1$.
If $h(T)>1$ then an order of the leaves is called canonical if (i) it implies a canonical order of 
 the leaves of every basic subtree of $T$,  and (ii) for an arbitrary but fixed order of the children $ch_1,\ldots,ch_d$ of 
the root $r(T)$ of $T$ all leaves of 
the basic subtree rooted at $ch_i$ precede all leaves of the basic subtree rooted at $ch_j$, for $i<j$, $i,j\in \{1,2,\ldots,d\}$,
 in this order. 
\item[(b)] 
 A {\bf canonical ordering of the vertices  of a {\em d}-regular tree $T$} is the unique   order 
if 
$h(T)=0$. If $h(T)\ge 1$, a canonical order of the vertices of $T$ is an order obtained by extending the 
canonical order of 
the vertices
 of the $d$-regular tree $T'$ of height $h(T')=h(T)-1$ obtained from $T$ by removing all of its leaves and 
fulfilling the following 
two properties:
(i) all vertices of $T'$ precede the leaves of $T$, and (ii) for any two leaves $a$ and $b$ of $T'$, if   $a$ precedes $b$, 
then  all children of $a$ in $T$ precede all children of $b$ in $T$.  
\end{itemize}
\end{definition} 

	If the leaves of a $d$-regular tree $T$ are ordered according to the canonical order as above, then the pairwise 
distances
 between them are given by a simple formula.
	\begin{observation}
		\label{observation:distance}
		Let $T$ be a $d$-regular tree of height $h \defeq h(T)$ and let its $b$ leaves be labelled according to the canonical 
order $b_1\prec b_2\prec \ldots \prec b_b$. Then the  distances between the leaves in $T$ are given as $d_T(b_i, b_j)= 2 l$, 
where
		\begin{equation}
			\label{equation:observation:distance:l}
			l \defeq \min \left\{ k \in \{1, 2, \ldots, h\}\colon \left\lfloor \frac{i - 1}{d^k} \right\rfloor = 
\left \lfloor\frac{j - 1}{d^k} \right\rfloor \right\},
		\end{equation}
		for all $i, j \in \{1, 2, \ldots, b\}$. If vertex $u$ is the most recent common ancestor of $b_i$ and $b_j$, 
then $h - l = \mbox{level}(u)$.
		
		\begin{proof}
			See {\sc \c{C}ela} and {\sc Stan\v{e}k}~\cite{CelaStanek:HeuristicsForTheDataArrangementProblemOnRegularTrees}.
		\end{proof}
	\end{observation}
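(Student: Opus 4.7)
My plan is to prove the observation by induction on the height $h$ of the tree $T$, exploiting the recursive structure of the canonical order: the first $d^{h-1}$ leaves lie in the basic subtree rooted at $ch_1$, the next $d^{h-1}$ in the basic subtree rooted at $ch_2$, and so on. Once the level of the most recent common ancestor $u$ is established as $h-l$, the claim $d_T(b_i,b_j)=2l$ follows immediately, since $b_i$ and $b_j$ both sit at level $h$ and the unique $b_i$-$b_j$-path in the tree goes through $u$.

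For the base case $h=1$, every two distinct leaves are siblings whose most recent common ancestor is the root at level $0$, and the index condition $\lfloor (i-1)/d\rfloor=\lfloor (j-1)/d\rfloor=0$ is satisfied for $k=1$ but never for $k=0$ (since $i\ne j$), so $l=1=h$, consistent with $h-l=0=\mathrm{level}(u)$.

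For the inductive step, I would distinguish two cases. In the first case, $b_i$ and $b_j$ lie in different basic subtrees, which by the canonical order is equivalent to $\lfloor (i-1)/d^{h-1}\rfloor\neq\lfloor (j-1)/d^{h-1}\rfloor$; then $l=h$ and the most recent common ancestor is the root $r(T)$ at level $0=h-l$. In the second case, both leaves belong to the same basic subtree $T'$ rooted at some child $ch_{s+1}$ of $r(T)$, so $\lfloor(i-1)/d^{h-1}\rfloor=\lfloor(j-1)/d^{h-1}\rfloor=s$. Within $T'$, whose height is $h-1$, the leaves receive local indices $i'=i-sd^{h-1}$ and $j'=j-sd^{h-1}$ (again in canonical order, by property (i) of the definition), and the induction hypothesis applies.

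The key bookkeeping step, and the only point that takes any real care, is verifying that the floor-condition defining $l$ is invariant under the shift from global to local indices, for all $k\in\{1,\ldots,h-1\}$. This follows from the identity
\begin{equation*}
  \left\lfloor\frac{i'-1}{d^k}\right\rfloor
  \;=\;\left\lfloor\frac{i-1}{d^k}\right\rfloor - s\,d^{h-1-k},
\end{equation*}
and its counterpart for $j$, which together show that $\lfloor (i'-1)/d^k\rfloor=\lfloor(j'-1)/d^k\rfloor$ iff $\lfloor(i-1)/d^k\rfloor=\lfloor(j-1)/d^k\rfloor$. Thus the value $l'$ produced by the induction hypothesis on $T'$ coincides with $l$, and the common ancestor is at level $(h-1)-l'$ within $T'$, hence at level $1+(h-1)-l=h-l$ within $T$, as required. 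The main (minor) obstacle is simply this index translation; the rest of the argument is structural and direct.
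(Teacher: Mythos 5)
The paper gives no proof of this observation at all---it simply defers to the reference by {\sc \c{C}ela} and {\sc Stan\v{e}k}---so there is no in-paper argument to compare yours against; what you have written is a self-contained proof of a statement the paper only cites. Your induction on $h$ is sound and is the natural argument: reducing $d_T(b_i,b_j)=2l$ to the claim $\mbox{level}(u)=h-l$ is legitimate because both leaves sit at level $h$ and the unique path between them passes through $u$; the index translation $i'=i-s\,d^{h-1}$ together with the identity $\lfloor (i'-1)/d^k\rfloor=\lfloor (i-1)/d^k\rfloor-s\,d^{h-1-k}$ is valid precisely because $s\,d^{h-1-k}$ is an integer for $k\le h-1$; and the level bookkeeping $1+\big((h-1)-l'\big)=h-l$ is correct. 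One small point you should make explicit in Case~1: from the failure of the floor condition at $k=h-1$ you immediately conclude $l=h$, but the definition of $l$ as a minimum also requires that the condition fail for every $k<h-1$. This follows from the monotonicity in $k$ of the agreement condition, which in turn rests on the nested-floor identity $\lfloor\lfloor x\rfloor/m\rfloor=\lfloor x/m\rfloor$: if $\lfloor (i-1)/d^{k_0}\rfloor=\lfloor (j-1)/d^{k_0}\rfloor$ held for some $k_0<h-1$, the floors would also agree at $k=h-1$, a contradiction. (The same fact guarantees in Case~2 that the minimum over $k\in\{1,\dots,h\}$ is attained at some $k\le h-1$, though there you already observe this directly since both local floors vanish at $k=h-1$.) With that one line added, the proof is complete.
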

	
	In this paper we deal with the special case where both the guest graph $G$ and the host graph $T$ are binary regular
 trees; an instance of this problem is fully specified by the guest graph $G$  and will be denoted by  $DAPT(G,2)$.
From now on we  denote by  $h_G$ the height of the guest graph $G$ and by $h$ the height of the host graph $T$.
  Moreover we will 
always use the canonical order $v_1\prec v_2 \prec \ldots 
\prec v_n$ of the vertices $v_i$, $1\le i \le n$, $n \defeq |V(G)|$,  of the guest graph, and the canonical order 
$b_1\prec b_2\prec \ldots \prec b_b$ of the $b$ leaves  of the host graph $T$  as in the 
observation above.  See e.g.\ Figure~\ref{figure:guestGraphGForHG3DepictingTheArrangementPhi} for an illustration of the canonical order of the vertices 
of  
a regular tree of height $3$; for simplicity  we specify  the indices $i$, $1\le i\le 15$  instead of the labels $v_i$, 
$1\le i\le 15$. 
	
	In the follwoing  we list some obvious equalities which will be used through the rest of this paper.
	\begin{observation}
		\label{observation:someObviousEqualities}
		\begin{alignat}{2}
			\label{equation:observation:someObviousEqualities:n}
			n &
				= 2^{h_G + 1} - 1\\
			\label{equation:observation:someObviousEqualities:h}
			h &
				= \lceil\log_2{(2^{h_G + 1} - 1)}\rceil = h_G + 1\\
			\label{equation:observation:someObviousEqualities:b}
			b &
				= 2^{h_G + 1} = 2 \cdot 2^{h_G} = n + 1
		\end{alignat}
	\end{observation}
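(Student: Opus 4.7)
The plan is to verify the three equalities separately; all of them are elementary counting facts about binary regular trees and there is no substantive obstacle, so the argument can be carried out essentially by inspection.

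For the first equality, I would count the vertices of $G$ level by level. Since $G$ is a binary regular tree of height $h_G$, level $k$ contains exactly $2^k$ vertices for each $k \in \{0, 1, \ldots, h_G\}$, so $n = \sum_{k=0}^{h_G} 2^k = 2^{h_G + 1} - 1$ by the standard geometric sum. For the second equality, the definition of the DAPT sets $h = \lceil \log_2 n \rceil$. Substituting the value of $n$ just computed and invoking the strict inequalities $2^{h_G} < 2^{h_G + 1} - 1 < 2^{h_G + 1}$ yields $h_G < \log_2 n < h_G + 1$, hence $\lceil \log_2 n \rceil = h_G + 1$. For the third equality, $T$ has exactly $2^h$ leaves because it is a binary regular tree of height $h$; combining this with the value of $h$ just determined gives $b = 2^{h_G + 1} = 2 \cdot 2^{h_G}$, and the identity $2^{h_G + 1} = n + 1$ then follows at once from the first equality.

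The only point worth double-checking is that the strict inequality $2^{h_G + 1} - 1 < 2^{h_G + 1}$ really forces the ceiling to equal exactly $h_G + 1$ rather than some larger integer; this in turn needs the implicit assumption $h_G \geq 1$, which is consistent with the setting since the definition of a $d$-regular tree requires the root to have degree $d$ and therefore the tree to have height at least $1$.
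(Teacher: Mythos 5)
Your proof is correct and supplies exactly the routine verification the paper omits --- the observation is stated there without any proof, as an obvious consequence of the definitions, and your level-by-level count plus the two strict inequalities is the intended argument. One small remark on your closing caveat: the inequality that actually requires $h_G \geq 1$ is the lower one, $2^{h_G} < 2^{h_G+1}-1$ (at $h_G = 0$ one has $2^{h_G} = 2^{h_G+1}-1 = 1$, so $\lceil\log_2 n\rceil = 0 \neq h_G+1$), whereas the upper bound $2^{h_G+1}-1 < 2^{h_G+1}$ holds for every $h_G$.
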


\section{An approximation algorithm}\label{section:solutionAlgorithm}
In this section we  describe an approximation algorithm $\cal A$ for the $DAPT(G,2)$ where the guest graph $G$ is a binary 
regular tree.
Later in Section~\ref{section:approxratio} it will be shown that this is an $\alpha$-approximation algorithm with 
$\alpha=\frac{203}{200}$, i.e.\ 
$ OV(G,2,\phi_A)\le \alpha OV(G,2,\phi_{\ast})$ holds for every binary regular tree $G$, where $\phi_{\ast}$ denotes the optimal 
arrangement of $DAPT(G,2)$ and $\phi_A$ denotes the arrangement computed by algorithm $\cal A$ described below.

	\begin{breakablealgorithm}
\title{\bf An approximation  algorithm} 
		\begin{algorithmic}[1]

			\Require binary regular tree $G = (V, E)$ of height $h_G$ whose vertices are labelled according to the canonical order
			\Ensure arrangement $\phi_A$
			\State $b \defeq 2^{h_G + 1}$;
			\label{algorithm:arrangementPhiU:settingOfB}
			\If{$h_G = 0$}
			\label{algorithm:arrangementPhiU:hGEqual0Condition}
				\State $\phi_A(v_1) \defeq b_1$;
				\label{algorithm:arrangementPhiU:hGEqual0Statement}
			\Else [$h_G > 0$]
			\label{algorithm:arrangementPhiU:hGEqual0ConditionElse}
				\State solve the problem for the basic subtrees $\widehat{G_1}$ and $\widehat{G_2}$ of 
height $\widehat{h_G} = h_G - 1$ and obtain the respective arrangements $\widehat{\phi_A}^{(1)}$ and $\widehat{\phi_A}^{(2)}$;
				\label{algorithm:arrangementPhiU:hGEqual0ConditionElseSolveTheSubproblems}
				\State arrange the vertices of the left basic subtree on the leaves $b_1$, $b_2$, \ldots, 
$b_{\frac{1}{2} b}$ according to the arrangement $\widehat{\phi_A}^{(1)}$ and the vertices of the right basic subtree on the leaves 
$b_{\frac{1}{2} b + 1}$, $b_{\frac{1}{2} b + 2}$, \ldots, $b_{b}$ according to the arrangement $\widehat{\phi_A}^{(2)}$;
				\label{algorithm:arrangementPhiU:hGEqual0ConditionElseArrangeTheSubproblems}
				\State $\phi_A(v_1) \defeq b_{\frac{1}{2} b}$;
				\label{algorithm:arrangementPhiU:hGEqual0ConditionElseArrangeTheRoot}
				\If{$h_G$ is odd and $h_G \geq 3$}
				\label{algorithm:arrangementPhiU:hGEqual0ConditionElsePairExchangeCondition}
					\State exchange the vertices arranged on the leaves $b_{\frac{1}{4} b - 1}$ 
and $b_{\frac{1}{2} b}$ (pair-exchange);
					\label{algorithm:arrangementPhiU:hGEqual0ConditionElsePairExchangeStatement}
				\EndIf
			\EndIf
	\Return $\phi_A$;
			\label{algorithm:arrangementPhiU:return}
		\end{algorithmic}
		\caption{The  approximation algorithm  ${\cal A}$ computes the arrangement $\phi_A$.}
		\label{algorithm:arrangementPhiU}
	\end{breakablealgorithm}
 \smallskip
	
\noindent In the following we apply this algorithm on an instamce of $DAPT(G,2)$ with $h_G=3$.
Observe that the leaf $b_{\frac{1}{2} b}$ is always free prior to the execution of  pseudocode 
line~\ref{algorithm:arrangementPhiU:hGEqual0ConditionElseArrangeTheRoot} due to the recursion and due to the assignment in 
pseudocode line~\ref{algorithm:arrangementPhiU:hGEqual0Statement}.
	
	\begin{example}
		Consider the guest graph $G = (V, E)$ of height $h_G = 3$ depicted in Figure~\ref{figure:guestGraphGForHG3DepictingTheArrangementPhi} and apply algorithm $\cal A$.
		\begin{figure}[htb]
			\begin{multicols}{2}
				\begin{figurehere}
					\centering
					\begin{comment:upperBound}
						\begin{tikzpictureGuestGraph}
							\node[nodeRedDashed] (node1) at (1.75, 3.00) {1};
				
							\node[nodeBlackDashed] (node2) at (0.75, 2.00) {2};
							\node[nodeBlueDashed] (node3) at (2.75, 2.00) {3};
				
							\node[nodeBlack] (node4) at (0.25, 1.00) {4};
							\node[nodeRed] (node5) at (1.25, 1.00) {5};
							\node[nodeBlue] (node6) at (2.25, 1.00) {6};
							\node[nodeGreen] (node7) at (3.25, 1.00) {7};
				
							\node[nodeBlack] (node8) at (0.00, 0.00) {8};
							\node[nodeBlackDashed] (node9) at (0.50, 0.00) {9};
							\node[nodeRed] (node10) at (1.00, 0.00) {10};
							\node[nodeRedDashed] (node11) at (1.50, 0.00) {11};
							\node[nodeBlue] (node12) at (2.00, 0.00) {12};
							\node[nodeBlueDashed] (node13) at (2.50, 0.00) {13};
							\node[nodeGreen] (node14) at (3.00, 0.00) {14};
							\node[nodeGreenDashed] (node15) at (3.50, 0.00) {15};
							
							\draw (node1) to (node2);
							\draw (node1) to (node3);
				
							\draw (node2) to (node4);
							\draw (node2) to (node5);
							\draw (node3) to (node6);
							\draw (node3) to (node7);
				
							\draw (node4) to (node8);
							\draw (node4) to (node9);
							\draw (node5) to (node10);
							\draw (node5) to (node11);
							\draw (node6) to (node12);
							\draw (node6) to (node13);
							\draw (node7) to (node14);
							\draw (node7) to (node15);
						\end{tikzpictureGuestGraph}
					\end{comment:upperBound}
					\caption{A guest graph $G = (V, E)$ (binary regular tree of height $h_G = 3$). The colours are related to  the arrangement $\phi$ depicted in Figure~\ref{figure:arrangementPhiForHG3}.}
					\label{figure:guestGraphGForHG3DepictingTheArrangementPhi}
				\end{figurehere}

				\begin{figurehere}
					\centering
					\begin{comment:upperBound}
						\begin{tikzpictureGuestGraph}
							\node[nodeBlackDashed] (node1) at (1.75, 3.00) {1};
				
							\node[nodeBlackDashed] (node2) at (0.75, 2.00) {2};
							\node[nodeBlueDashed] (node3) at (2.75, 2.00) {3};
				
							\node[nodeBlack] (node4) at (0.25, 1.00) {4};
							\node[nodeRed] (node5) at (1.25, 1.00) {5};
							\node[nodeBlue] (node6) at (2.25, 1.00) {6};
							\node[nodeGreen] (node7) at (3.25, 1.00) {7};
				
							\node[nodeBlack] (node8) at (0.00, 0.00) {8};
							\node[nodeRedDashed] (node9) at (0.50, 0.00) {9};
							\node[nodeRed] (node10) at (1.00, 0.00) {10};
							\node[nodeRedDashed] (node11) at (1.50, 0.00) {11};
							\node[nodeBlue] (node12) at (2.00, 0.00) {12};
							\node[nodeBlueDashed] (node13) at (2.50, 0.00) {13};
							\node[nodeGreen] (node14) at (3.00, 0.00) {14};
							\node[nodeGreenDashed] (node15) at (3.50, 0.00) {15};
							
							\draw (node1) to (node2);
							\draw (node1) to (node3);
				
							\draw (node2) to (node4);
							\draw (node2) to (node5);
							\draw (node3) to (node6);
							\draw (node3) to (node7);
				
							\draw (node4) to (node8);
							\draw (node4) to (node9);
							\draw (node5) to (node10);
							\draw (node5) to (node11);
							\draw (node6) to (node12);
							\draw (node6) to (node13);
							\draw (node7) to (node14);
							\draw (node7) to (node15);
						\end{tikzpictureGuestGraph}
					\end{comment:upperBound}
					\caption{A guest graph $G = (V, E)$ (binary regular tree of height $h_G = 3$). The colours are related to the arrangement $\phi$ depicted in Figure~\ref{figure:arrangementPhiUForHG3}.}
					\label{figure:guestGraphGForHG3DepictingTheArrangementPhiA}
				\end{figurehere}
			\end{multicols}
		\end{figure}
		
\noindent Since $h_G = 3 > 0$, the algorithm executes  the else part beginning in pseudocode 
line~\ref{algorithm:arrangementPhiU:hGEqual0ConditionElse}. 
In pseudocode lines~\ref{algorithm:arrangementPhiU:hGEqual0ConditionElseSolveTheSubproblems} and 
\ref{algorithm:arrangementPhiU:hGEqual0ConditionElseArrangeTheSubproblems}  the arrangements for both basic subtrees, 
i.e.\ for graphs of height $\widehat{h_G} = h_G - 1 = 3 - 1 = 2$ are computed.
(The arrangement $\widehat{\phi_A}$ for $\widehat{h_G} = 2$ is depicted in Figure~\ref{figure:arrangementPhiUForHG2} in Appendix.)
 In the next step, the root is arranged at the middle leaf (see pseudocode 
line~\ref{algorithm:arrangementPhiU:hGEqual0ConditionElseArrangeTheRoot}) and  the arrangement $\phi$ depicted in 
Figure~\ref{figure:arrangementPhiForHG3} is obtained. The label of each leaf corresponds to the index of the vertex 
of the guest graph  arranged at that leaf.  The  objective value which  corresponds to  arrangement $\phi$ is   $OV(G, 2, \phi) = 58$.

		\begin{figure}[htb]
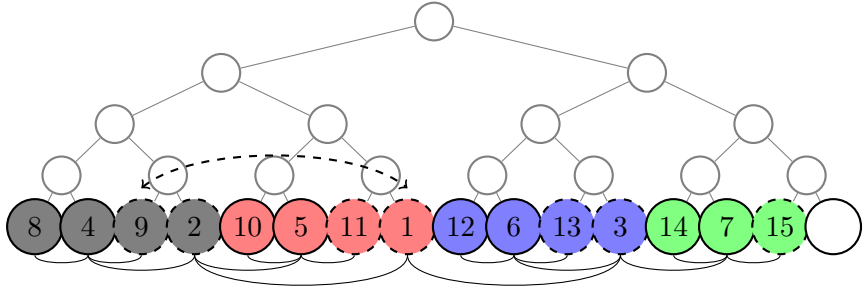

			\centering
			\begin{comment:upperBound}
				\begin{tikzpictureHostGraph}
					\node[otherNode] (node1) at (3.75, 4.00) {};
		
					\node[otherNode] (node2) at (1.75, 3.00) {};
					\node[otherNode] (node3) at (5.75, 3.00) {};
		
					\node[otherNode] (node4) at (0.75, 2.00) {};
					\node[otherNode] (node5) at (2.75, 2.00) {};
					\node[otherNode] (node6) at (4.75, 2.00) {};
					\node[otherNode] (node7) at (6.75, 2.00) {};
		
					\node[otherNode] (node8) at (0.25, 1.00) {};
					\node[otherNode] (node9) at (1.25, 1.00) {};
					\node[otherNode] (node10) at (2.25, 1.00) {};
					\node[otherNode] (node11) at (3.25, 1.00) {};
					\node[otherNode] (node12) at (4.25, 1.00) {};
					\node[otherNode] (node13) at (5.25, 1.00) {};
					\node[otherNode] (node14) at (6.25, 1.00) {};
					\node[otherNode] (node15) at (7.25, 1.00) {};
		
					\node[leafUsedBlack] (node16) at (0.00, 0.00) {8};
					\node[leafUsedBlack] (node17) at (0.50, 0.00) {4};
					\node[leafUsedBlackDashed] (node18) at (1.00, 0.00) {9};
					\node[leafUsedBlackDashed] (node19) at (1.50, 0.00) {2};
					\node[leafUsedRed] (node20) at (2.00, 0.00) {10};
					\node[leafUsedRed] (node21) at (2.50, 0.00) {5};
					\node[leafUsedRedDashed] (node22) at (3.00, 0.00) {11};
					\node[leafUsedRedDashed] (node23) at (3.50, 0.00) {1};
					\node[leafUsedBlue] (node24) at (4.00, 0.00) {12};
					\node[leafUsedBlue] (node25) at (4.50, 0.00) {6};
					\node[leafUsedBlueDashed] (node26) at (5.00, 0.00) {13};
					\node[leafUsedBlueDashed] (node27) at (5.50, 0.00) {3};
					\node[leafUsedGreen] (node28) at (6.00, 0.00) {14};
					\node[leafUsedGreen] (node29) at (6.50, 0.00) {7};
					\node[leafUsedGreenDashed] (node30) at (7.00, 0.00) {15};
					\node[leafUnused] (node31) at (7.50, 0.00) {};
		
					\draw [-, black!50] (node1) to (node2);
					\draw [-, black!50] (node1) to (node3);
		
					\draw [-, black!50] (node2) to (node4);
					\draw [-, black!50] (node2) to (node5);
					\draw [-, black!50] (node3) to (node6);
					\draw [-, black!50] (node3) to (node7);
		
					\draw [-, black!50] (node4) to (node8);
					\draw [-, black!50] (node4) to (node9);
					\draw [-, black!50] (node5) to (node10);
					\draw [-, black!50] (node5) to (node11);
					\draw [-, black!50] (node6) to (node12);
					\draw [-, black!50] (node6) to (node13);
					\draw [-, black!50] (node7) to (node14);
					\draw [-, black!50] (node7) to (node15);
		
					\draw [-, black!50] (node8) to (node16);
					\draw [-, black!50] (node8) to (node17);
					\draw [-, black!50] (node9) to (node18);
					\draw [-, black!50] (node9) to (node19);
					\draw [-, black!50] (node10) to (node20);
					\draw [-, black!50] (node10) to (node21);
					\draw [-, black!50] (node11) to (node22);
					\draw [-, black!50] (node11) to (node23);
					\draw [-, black!50] (node12) to (node24);
					\draw [-, black!50] (node12) to (node25);
					\draw [-, black!50] (node13) to (node26);
					\draw [-, black!50] (node13) to (node27);
					\draw [-, black!50] (node14) to (node28);
					\draw [-, black!50] (node14) to (node29);
					\draw [-, black!50] (node15) to (node30);
					\draw [-, black!50] (node15) to (node31);
					
					\draw [-] (node23) to [out=-90,in=-90] (node19);
					\draw [-] (node23) to [out=-90,in=-90] (node27);
		
					\draw [-] (node19) to [out=-90,in=-90] (node17);
					\draw [-] (node19) to [out=-90,in=-90] (node21);
					\draw [-] (node27) to [out=-90,in=-90] (node25);
					\draw [-] (node27) to [out=-90,in=-90] (node29);
		
					\draw [-] (node17) to [out=-90,in=-90] (node16);
					\draw [-] (node17) to [out=-90,in=-90] (node18);
					\draw [-] (node21) to [out=-90,in=-90] (node20);
					\draw [-] (node21) to [out=-90,in=-90] (node22);
					\draw [-] (node25) to [out=-90,in=-90] (node24);
					\draw [-] (node25) to [out=-90,in=-90] (node26);
					\draw [-] (node29) to [out=-90,in=-90] (node28);
					\draw [-] (node29) to [out=-90,in=-90] (node30);
					
					\draw [<->, dashed, thick] (1.00, 0.75) to [out=60,in=120] (3.50, 0.75);
				\end{tikzpictureHostGraph}
			\end{comment:upperBound}
			\caption{Arrangement $\phi$ with objective function value $OV(G, 2, \phi) = 58$.}
			\label{figure:arrangementPhiForHG3}
		\end{figure}
		
		Next  consider the condition in pseudocode
 line~\ref{algorithm:arrangementPhiU:hGEqual0ConditionElsePairExchangeCondition}: 
Since $h_G = 3$ is odd and $h_G = 3 \geq 3$, the pair-exchange marked in Figure~\ref{figure:arrangementPhiForHG3} by 
arrows is performed. The value of the objective function corresponding to the resulting arrangement $\phi_A$ 
in Figure~\ref{figure:arrangementPhiUForHG3}
is $OV(G, 2, \phi_A) = 56$. The guest graph coloured according to this arrangement is depicted in Figure~\ref{figure:guestGraphGForHG3DepictingTheArrangementPhiA}. In fact, this arrangement is optimal, but in general Algorithm~\ref{algorithm:arrangementPhiU} 
does not yield an optimal arrangement.
		
		\begin{figure}[htb]
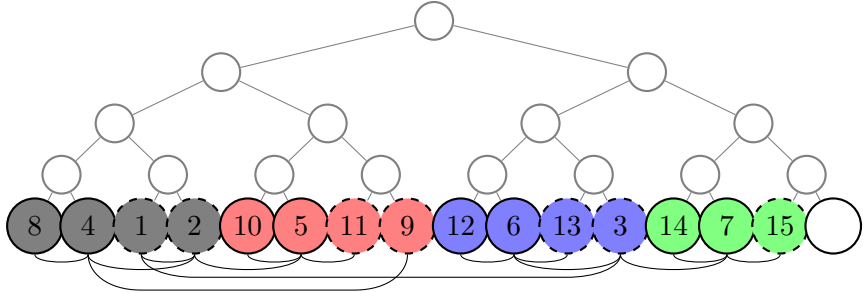

			\centering
			\begin{comment:upperBound}
				\begin{tikzpictureHostGraph}
					\node[otherNode] (node1) at (3.75, 4.00) {};
		
					\node[otherNode] (node2) at (1.75, 3.00) {};
					\node[otherNode] (node3) at (5.75, 3.00) {};
		
					\node[otherNode] (node4) at (0.75, 2.00) {};
					\node[otherNode] (node5) at (2.75, 2.00) {};
					\node[otherNode] (node6) at (4.75, 2.00) {};
					\node[otherNode] (node7) at (6.75, 2.00) {};
		
					\node[otherNode] (node8) at (0.25, 1.00) {};
					\node[otherNode] (node9) at (1.25, 1.00) {};
					\node[otherNode] (node10) at (2.25, 1.00) {};
					\node[otherNode] (node11) at (3.25, 1.00) {};
					\node[otherNode] (node12) at (4.25, 1.00) {};
					\node[otherNode] (node13) at (5.25, 1.00) {};
					\node[otherNode] (node14) at (6.25, 1.00) {};
					\node[otherNode] (node15) at (7.25, 1.00) {};
		
					\node[leafUsedBlack] (node16) at (0.00, 0.00) {8};
					\node[leafUsedBlack] (node17) at (0.50, 0.00) {4};
					\node[leafUsedBlackDashed] (node18) at (1.00, 0.00) {1};
					\node[leafUsedBlackDashed] (node19) at (1.50, 0.00) {2};
					\node[leafUsedRed] (node20) at (2.00, 0.00) {10};
					\node[leafUsedRed] (node21) at (2.50, 0.00) {5};
					\node[leafUsedRedDashed] (node22) at (3.00, 0.00) {11};
					\node[leafUsedRedDashed] (node23) at (3.50, 0.00) {9};
					\node[leafUsedBlue] (node24) at (4.00, 0.00) {12};
					\node[leafUsedBlue] (node25) at (4.50, 0.00) {6};
					\node[leafUsedBlueDashed] (node26) at (5.00, 0.00) {13};
					\node[leafUsedBlueDashed] (node27) at (5.50, 0.00) {3};
					\node[leafUsedGreen] (node28) at (6.00, 0.00) {14};
					\node[leafUsedGreen] (node29) at (6.50, 0.00) {7};
					\node[leafUsedGreenDashed] (node30) at (7.00, 0.00) {15};
					\node[leafUnused] (node31) at (7.50, 0.00) {};
		
					\draw [-, black!50] (node1) to (node2);
					\draw [-, black!50] (node1) to (node3);
		
					\draw [-, black!50] (node2) to (node4);
					\draw [-, black!50] (node2) to (node5);
					\draw [-, black!50] (node3) to (node6);
					\draw [-, black!50] (node3) to (node7);
		
					\draw [-, black!50] (node4) to (node8);
					\draw [-, black!50] (node4) to (node9);
					\draw [-, black!50] (node5) to (node10);
					\draw [-, black!50] (node5) to (node11);
					\draw [-, black!50] (node6) to (node12);
					\draw [-, black!50] (node6) to (node13);
					\draw [-, black!50] (node7) to (node14);
					\draw [-, black!50] (node7) to (node15);
		
					\draw [-, black!50] (node8) to (node16);
					\draw [-, black!50] (node8) to (node17);
					\draw [-, black!50] (node9) to (node18);
					\draw [-, black!50] (node9) to (node19);
					\draw [-, black!50] (node10) to (node20);
					\draw [-, black!50] (node10) to (node21);
					\draw [-, black!50] (node11) to (node22);
					\draw [-, black!50] (node11) to (node23);
					\draw [-, black!50] (node12) to (node24);
					\draw [-, black!50] (node12) to (node25);
					\draw [-, black!50] (node13) to (node26);
					\draw [-, black!50] (node13) to (node27);
					\draw [-, black!50] (node14) to (node28);
					\draw [-, black!50] (node14) to (node29);
					\draw [-, black!50] (node15) to (node30);
					\draw [-, black!50] (node15) to (node31);
					
					\draw [-] (node18) to [out=-90,in=-90] (node19);
					\draw [-] (node18) to [out=-90,in=180] (1.50, -1.00) to [out=0,in=180] (5.00, -1.00) 
to [out=0,in=-90] (node27);
		
					\draw [-] (node19) to [out=-90,in=-90] (node17);
					\draw [-] (node19) to [out=-90,in=-90] (node21);
					\draw [-] (node27) to [out=-90,in=-90] (node25);
					\draw [-] (node27) to [out=-90,in=-90] (node29);
		
					\draw [-] (node17) to [out=-90,in=-90] (node16);
					\draw [-] (node17) to [out=-90,in=180] (1.00, -1.25) to [out=0,in=180] (3.00, -1.25) 
to [out=0,in=-90] (node23);
					\draw [-] (node21) to [out=-90,in=-90] (node20);
					\draw [-] (node21) to [out=-90,in=-90] (node22);
					\draw [-] (node25) to [out=-90,in=-90] (node24);
					\draw [-] (node25) to [out=-90,in=-90] (node26);
					\draw [-] (node29) to [out=-90,in=-90] (node28);
					\draw [-] (node29) to [out=-90,in=-90] (node30);
				\end{tikzpictureHostGraph}
			\end{comment:upperBound}
			\caption{Arrangement $\phi_A$ obtained from Algorithm~\ref{algorithm:arrangementPhiU} with objective function value  $OV(G, 2, \phi_A) = 56$.}
			\label{figure:arrangementPhiUForHG3}
		\end{figure}
\end{example}
	
	Next we give a closed formula for the objective function value corresponding to the arrangement $\phi_A$ computed by the  
algorithm ${\cal A}$.
	
\begin{lemma}\label{lemma:improvementForEveryPairExchange}
		Let the guest graph $G = (V, E)$ and the host graph $T$ be  binary regular trees  of heights $h_G$ and $h = h_G + 1$ 
respectively, where $h_G$ is odd, $h_G \geq 3$. Then the pair-exchange defined in Algorithm~\ref{algorithm:arrangementPhiU} in
 pseudocode 
line~\ref{algorithm:arrangementPhiU:hGEqual0ConditionElsePairExchangeStatement} 
decreases by $1$ the number of edges which contribute to the objective by $4$, increases by $1$ the number of edges which 
contribute to 
the objective value by $2$, and does not change the number of edges which  contribute to 
the objective value by $2i$ for $i\ge 3$. Summarizing such a  pair-exchange 
improves the value of the objective function  by $2$ as compared to the value corresponding to the arrangement available
 prior to this pair-exchange.
		
		\begin{proof}
Let $\phi$ be the  arrangement available prior to the pair-exchange steps done in pseudocode 
line~\ref{algorithm:arrangementPhiU:hGEqual0ConditionElsePairExchangeStatement} in  Algorithm~\ref{algorithm:arrangementPhiU}. 
Denote the arrangement obtained after the pair-exchange by $\phi_A$. 
Consider the vertices and edges which are affected by the pair-exchange in pseudocode 
line~\ref{algorithm:arrangementPhiU:hGEqual0ConditionElsePairExchangeStatement}.
						According to the algorithm (see  pseudocode 
line~\ref{algorithm:arrangementPhiU:hGEqual0ConditionElseArrangeTheRoot}) the root $v_1$ of $G$ is arranged   
on the leaf $b_{\frac{1}{2} b}$ 
  and its left and right children $v_2$ and $v_3$ are arranged   on the leaves $b_{\frac{1}{4} b}$ and $b_{\frac{3}{4} b}$, respectively.
(Recall that  the pair-exchange is performed only if $h_G$ is odd). Moreover the algorithm places the rightmost leaf, say $x$, 
 of the basic subtree of 
$G$ rooted at the child $v_2$ on leaf  $b_{\frac{1}{4} b-1}$ of $T$. So the pair-exchange involves the vertices $v_1$ and $x$ of $G$
 and $\phi_A(v_1)=\phi(x)$, $\phi_A(x)=\phi(v_1)$, $\phi_A(y)=\phi(y)$, for $y\in V\setminus \{v_1,x\}$, hold. 
The change $\Delta$ in  the value of the objective functions corresponding to $\phi$ and $\phi_A$, respectively, 
 is then  given as follows
\[
 \Delta \defeq OV(G,2,\phi)-OV(G,2,\phi_A)=\sum_{\stackrel{v\in V\setminus\{x\}}{\{v,v_1\}\in E}} d_T\big(\phi(v),\phi(v_1)\big)+\]
\[
\sum_{\stackrel{v\in V\setminus\{v_1\}}{\{v,x\}\in E}} d_T\big(\phi(v),\phi(x)\big)-\sum_{\stackrel{v\in V\setminus\{x\}}{\{v,v_1\}\in E}} d_T\big(\phi_A(v),\phi_A(v_1)\big)-
\sum_{\stackrel{v\in V\setminus\{v_1\}}{\{v,x\}\in E}} d_T\big(\phi_A(v),\phi_A(x)\big)=\]
\[
\sum_{\stackrel{v\in V\setminus\{x\}}{\{v,v_1\}\in E}} \Big [d_T\big(\phi(v),\phi(v_1)\big)-d_T\big(\phi_A(v),\phi_A(v_1)\big) \Big ] +\]
\[
\sum_{\stackrel{v\in V\setminus\{v_1\}}{\{v,x\}\in E}}
\Big [d_T\big(\phi(v),\phi(x)\big)-d_T\big(\phi_A(v),\phi_A(x)\big) \Big ]=\]
\[
\sum_{\stackrel{v\in V\setminus\{x\}}{\{v,v_1\}\in E}} \Big [d_T\big(\phi(v),\phi(v_1)\big)-d_T\big(\phi(v),\phi(x)\big) \Big] +\]
\[
\sum_{\stackrel{v\in V\setminus\{v_1\}}{\{v,x\}\in E}} \Big [d_T\big(\phi(v),\phi(x)\big)-d_T\big(\phi(v),\phi(v_1)\big)\Big ]
\]
Considering that  $v_1$ has only  two neighbours, namely $v_2$ and $v_3$,  and denoting by $y$ the unique  neighbour 
(i.e. the father) of leaf $x$ in $G$ we get  
\[ \Delta= d_T\big(\phi(v_2)\phi(v_1)\big)-d_T\big(\phi(v_2)\phi(x)\big)+d_T\big(\phi(v_3)\phi(v_1)\big)-d_T\big(\phi(v_3)\phi(x)\big)+\]
\[d_T\big(\phi(y)\phi(x)\big)-d_T\big(\phi(y)\phi(v_1)\big) 
=2h_G-2+2(h_G+1)-2(h_G+1)+4 - 2h_G=2\, .\]
		\end{proof}
	\end{lemma}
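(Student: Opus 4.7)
The plan is to compute directly the change $\Delta \defeq OV(G,2,\phi) - OV(G,2,\phi_A)$ induced by the pair-exchange. Only the two vertices $v_1$ and $x$ (where $x$ denotes the vertex with $\phi(x)=b_{\frac{1}{4}b-1}$) are moved, and $\phi_A$ agrees with $\phi$ elsewhere, so the only edges of $G$ whose contribution changes are those incident in $G$ to $v_1$ or to $x$. The root $v_1$ has exactly the two neighbours $v_2,v_3$; the argument below will identify $x$ as a leaf of $G$, so $x$ has a single neighbour $y$ in $G$, namely its father. Hence $\Delta$ reduces to three pairwise differences of distances in $T$.

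The first and main task is to pin down the five positions $\phi(v_1),\phi(v_2),\phi(v_3),\phi(x),\phi(y)$. By pseudocode line~\ref{algorithm:arrangementPhiU:hGEqual0ConditionElseArrangeTheRoot} we have $\phi(v_1)=b_{\frac{1}{2}b}$ and by construction $\phi(x)=b_{\frac{1}{4}b-1}$. Because $h_G$ is odd, the two recursive calls run on guest graphs of \emph{even} height $h_G-1\ge 2$, hence neither of them performs a top-level pair-exchange, which gives $\phi(v_2)=b_{\frac{1}{4}b}$ and $\phi(v_3)=b_{\frac{3}{4}b}$. Tracing the absolute position $b_{\frac{1}{4}b-1}$ downward through the recursion, it lies in the left sub-subtree of $G$ (rooted at $v_4$) and, because $\frac{1}{4}b-1\equiv 3\pmod 4$, it is the ``right child'' position of a bottom basic subtree of $G$; thus the vertex $x$ placed there is indeed a leaf of $G$, and its father $y$ is placed at the ``root'' position of the same bottom basic subtree, giving $\phi(y)=b_{\frac{1}{4}b-2}$. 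Finally one verifies that none of the intermediate pair-exchanges, each of which swaps only two specific interior leaves of some proper sub-subtree, ever disturbs any of the four positions $b_{\frac{1}{4}b},b_{\frac{3}{4}b},b_{\frac{1}{4}b-1},b_{\frac{1}{4}b-2}$.

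With the five positions fixed, Observation~\ref{observation:distance} and a short bit-counting on the canonical indices give
\[
d_T(\phi(v_2),\phi(v_1)) = 2h_G, \qquad d_T(\phi(v_2),\phi(x)) = 2,
\]
\[
d_T(\phi(v_3),\phi(v_1)) = d_T(\phi(v_3),\phi(x)) = 2(h_G+1),
\]
\[
d_T(\phi(y),\phi(x)) = 4, \qquad d_T(\phi(y),\phi(v_1)) = 2h_G.
\]
Substituting into
\[
\Delta = \bigl[d_T(\phi(v_2),\phi(v_1))-d_T(\phi(v_2),\phi(x))\bigr] + \bigl[d_T(\phi(v_3),\phi(v_1))-d_T(\phi(v_3),\phi(x))\bigr] + \bigl[d_T(\phi(y),\phi(x))-d_T(\phi(y),\phi(v_1))\bigr]
\]
yields $\Delta = (2h_G-2) + 0 + (4-2h_G) = 2$. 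The edge-count claims then follow by comparing the multiset of contributions of the three affected edges before the exchange, $\{2h_G,\,2(h_G+1),\,4\}$, with that after, $\{2,\,2(h_G+1),\,2h_G\}$: exactly one edge leaves the contribution-$4$ class, exactly one edge enters the contribution-$2$ class, and every class $2i$ with $i\ge 3$ (namely $2h_G\ge 6$ and $2(h_G+1)\ge 8$) has the same multiplicity before and after.

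The main obstacle is the second paragraph, in particular the claim that the deeper recursive pair-exchanges never disturb any of the four positions involved in the calculation. The algorithm performs a pair-exchange at every odd-height level of the recursion, so the final arrangement $\phi$ deviates from the ``naive'' recursive arrangement at many leaves; one has to check, by a small case analysis on the local position of each relevant host leaf inside each containing sub-subtree, that no deeper pair-exchange ever hits one of those four positions. Once this invariance is in place the remainder is only bookkeeping of the distances via Observation~\ref{observation:distance}.
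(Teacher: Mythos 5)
Your proposal is correct and follows essentially the same route as the paper's proof: restrict attention to the three edges incident to $v_1$ and to the leaf $x$ placed at $b_{\frac{1}{4}b-1}$, evaluate the six relevant distances in $T$, and obtain $\Delta=(2h_G-2)+0+(4-2h_G)=2$ together with the multiset comparison $\{2h_G,\,2(h_G+1),\,4\}$ versus $\{2,\,2(h_G+1),\,2h_G\}$. The only difference is that you make explicit (and verify) the position-tracking through the recursion that the paper merely asserts, which is a welcome addition rather than a deviation.
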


	\begin{lemma}
		\label{lemma:upperBound}
Let the guest graph $G = (V, E)$ and the host graph $T$ be  binary regular trees  of heights $h_G$ and $h = h_G + 1$ 
respectively. Then the value  $OV(G, 2, \phi_A)$ of the objective  function of the DAPT 
correspondig to the arrangement   $\phi_A$  obtained from Algorithm~\ref{algorithm:arrangementPhiU} is given as follows:

		\begin{equation}
			\label{equation:lemma:upperBound}
				OV(G_{h_G}, 2, \phi_A) \defeq OV(G, 2, \phi_A) = \left\{ 
				\begin{array}{ll}
					0
						& \text{for } h_G = 0\\
					\frac{29}{3} \cdot 2^{h_G} - 4 h_G - 9 + \frac{1}{3} (-1)^{h_G}
						& \text{for } h_G \geq 1\\
				\end{array} \right.
				.
		\end{equation}
		
\begin{proof}
The proof is done by induction with respect to $h_G$. 
Let us denote $G_h$ a binary regular tree of height $h$ throughout this proof. 
Clearly we have $OV(G_0, 2, \phi_A) = 0$.
For $h_G = 1$ we obviously have $OV(G_1, 2, \phi_A) = 2 + 4 = 6$ by the construction 
(see Figures~\ref{figure:guestGraphGForHG1} and \ref{figure:arrangementPhiUForHG1} in Appendix). Both this equalities 
 are consistent with (\ref{equation:lemma:upperBound}). 
\smallskip
			
\noindent Assume that (\ref{equation:lemma:upperBound}) holds for some $h_G \geq 1$. For $h_G + 1$ we get
\begin{equation}
				\label{equation:equation:lemma:upperBound:proof}
				\begin{split}
					OV(G_{h_G + 1}, 2, \phi_A) = \left\{
						\begin{array}{l}
							2 OV(G_{h_G}, 2, \phi_A) + 2 (h_G + 1) + 2 (h_G + 2) - 2\\
							\qquad \text{for } h_G + 1 \text{ odd}\\
							2 OV(G_{h_G}, 2, \phi_A) + 2 (h_G + 1) + 2 (h_G + 2)\\
							\qquad \text{for } h_G + 1 \text{ even}\\
						\end{array} \right.
						,
				\end{split}
\end{equation}
			where:

\begin{itemize}
\item $2 OV(G_{h_G + 1}, 2, \phi_A)$ represents the objective function value corresponding to the arrangements of the basic subtrees.
\item $2 (h_G + 1)$ and $2 (h_G + 2)$ represent the contribution of   the edges connecting the root $v_1$ with its left and right 
child in the objective function value, respectively. 
(Prior to the pair-exchange step the root $v_1$ is arranged at the leaf $b_{\frac{1}{2} b}$ while  its children, 
$v_2$ and $v_3$ are arranged   at the leaves $v_{\frac{1}{4} b}$ and $b_{\frac{3}{4} b}$, respectively.)
\item  $-2$ represents the contribution of the pair-exchange step  if $h_G + 1$ is odd 
($h_G + 1 \geq 3$ since $h_G \geq 1$), according to Lemma~\ref{lemma:improvementForEveryPairExchange}.
\end{itemize}
\smallskip
									
According to the induction assumption we substitute $OV(G_{h_G},2,\phi_A)$ by the expression on the right hand side of 
equation~(\ref{equation:lemma:upperBound}) and after simplifying we get
		
$$
OV(G_{h_G + 1}, 2, \phi_A)=\left \{
\begin{array}{ll}
\frac{29}{3} \cdot 2^{h_G + 1} - 4 (h_G + 1) - 9 + \frac{1}{3} (-1)^{h_G + 1} & \mbox{if $h_G+1$ is odd}\\[0.2cm]
 \frac{29}{3} \cdot 2^{h_G + 1} - 4 (h_G + 1) - 9 + \frac{1}{3} (-1)^{h_G + 1} &  \mbox{if $h_G+1$ is even.}
\end{array} \right .
$$
 		\end{proof}
	\end{lemma}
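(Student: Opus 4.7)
The plan is to prove the closed form by induction on $h_G$, relying on the recursion already established in equation (\ref{equation:equation:lemma:upperBound:proof}) of the lemma statement. The base cases $h_G=0$ (empty edge set, so $OV=0$) and $h_G=1$ (directly $OV=2+4=6$, which matches $\tfrac{29}{3}\cdot 2 - 4 - 9 - \tfrac{1}{3} = 6$) can be checked by inspection, exactly as indicated in the excerpt. The bulk of the work is the inductive step.

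For the inductive step I would assume the formula at level $h_G\ge 1$ and compute $OV(G_{h_G+1},2,\phi_A)$ by substituting the induction hypothesis into the recursion. Writing $\delta := -2$ if $h_G+1$ is odd (equivalently $h_G$ even) and $\delta := 0$ otherwise, the recursion can be uniformly stated as
\[
OV(G_{h_G+1},2,\phi_A) = 2\,OV(G_{h_G},2,\phi_A) + 2(h_G+1) + 2(h_G+2) + \delta.
\]
Plugging in the induction hypothesis gives
\[
OV(G_{h_G+1},2,\phi_A) = \tfrac{29}{3}\cdot 2^{h_G+1} - 4h_G - 12 + \tfrac{2}{3}(-1)^{h_G} + \delta,
\]
and the target expression for $h_G+1$ equals $\tfrac{29}{3}\cdot 2^{h_G+1} - 4h_G - 13 - \tfrac{1}{3}(-1)^{h_G}$. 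Matching these forces $\delta = -1 - (-1)^{h_G}$, which takes the value $-2$ when $h_G$ is even (so $h_G+1$ is odd) and $0$ when $h_G$ is odd; this is exactly the case distinction produced by the pair-exchange in line \ref{algorithm:arrangementPhiU:hGEqual0ConditionElsePairExchangeStatement} combined with Lemma~\ref{lemma:improvementForEveryPairExchange}.

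The only subtle point is to justify the recursion itself. I would verify that (i) the two basic subtree contributions really equal $2\,OV(G_{h_G},2,\phi_A)$, which holds because each basic subtree is mapped to the leaves of a host subtree of height $h=h_G+1$ using the same recursively constructed arrangement, and the distances within each basic subtree depend only on its internal structure; (ii) the two edges incident to the new root $v_1$ contribute $2(h_G+1)+2(h_G+2)$, which follows from Observation~\ref{observation:distance} and the placement $\phi_A(v_1)=b_{b/2}$, $\phi_A(v_2)=b_{b/4}$, $\phi_A(v_3)=b_{3b/4}$ (the most recent common ancestor of the first pair lies at level $1$ of $T$, and of the second pair at the root of $T$); (iii) when $h_G+1\ge 3$ is odd the pair-exchange further reduces the objective by exactly $2$, as proved in Lemma~\ref{lemma:improvementForEveryPairExchange}, and when $h_G+1\le 2$ or is even no pair-exchange occurs.

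The main obstacle is really bookkeeping: keeping track of the heights ($h=h_G+1$ for the original tree, $h_G$ for its basic subtrees viewed as new guest graphs, and $h_G+2$ for the host tree at level $h_G+1$), and ensuring the parity flip in $(-1)^{h_G}$ interacts correctly with the conditional term $\delta$. Once these are set up carefully, the verification reduces to the elementary identity $-1-(-1)^{h_G} = \delta$, completing the induction.
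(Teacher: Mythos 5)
Your proposal is correct and follows essentially the same route as the paper: induction on $h_G$ with the base cases $h_G=0,1$ checked directly, the recursion $OV(G_{h_G+1},2,\phi_A)=2\,OV(G_{h_G},2,\phi_A)+2(h_G+1)+2(h_G+2)+\delta$ justified via the placement of $v_1,v_2,v_3$ and Lemma~\ref{lemma:improvementForEveryPairExchange}, and then algebraic simplification. Your explicit reduction of the parity bookkeeping to the identity $\delta=-1-(-1)^{h_G}$ is a clean way to package the case distinction the paper handles by writing out both branches.
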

	
	Finally, notice that this approximation algorithm ${\cal A}$ does not solve the problem to optimality as illustrated by the following example.
	\begin{example}
		\label{exmaple:thisApproximationAlgorithmDoesNotSolveTheProblemToOptimality}
		Consider a guest graph $G = (V, E)$ of height $h_G = 6$ depicted in 
Figure~\ref{figure:guestGraphGForHG6DepictingTheArrangementPhiA}. The arrangement $\phi_A$ computed by  
 Algorithm~\ref{algorithm:arrangementPhiU} is depicted in Figures~\ref{figure:arrangementPhiAForHG6FirstPart} and \ref{figure:arrangementPhiAForHG6SecondPart}; it yields  an  objective function value of $OV(G, 2, \phi_A) = 586$. Consider now another arrangement $\phi$ for the same graph yilding  an  objective function  value of $OV(G, 2, \phi) = 584$ and depicted explicitly in  Figures~\ref{figure:guestGraphGForHG6DepictingTheArrangementPhi}, \ref{figure:arrangementPhiForHG6FirstPart} and \ref{figure:arrangementPhiForHG6SecondPart}. 
Later in Section~\ref{section:approxratio}  we will show  that  the approximation algorithm 
${\cal A}$ yields an optimal arrangement $\phi_A$ for $h_G \leq 5$. Thus this is the smallest instance of the $DAPT(G,2)$
for which the algorithm ${\cal A}$ does not compute an optimal arrangement.

		\begin{figure}[p]
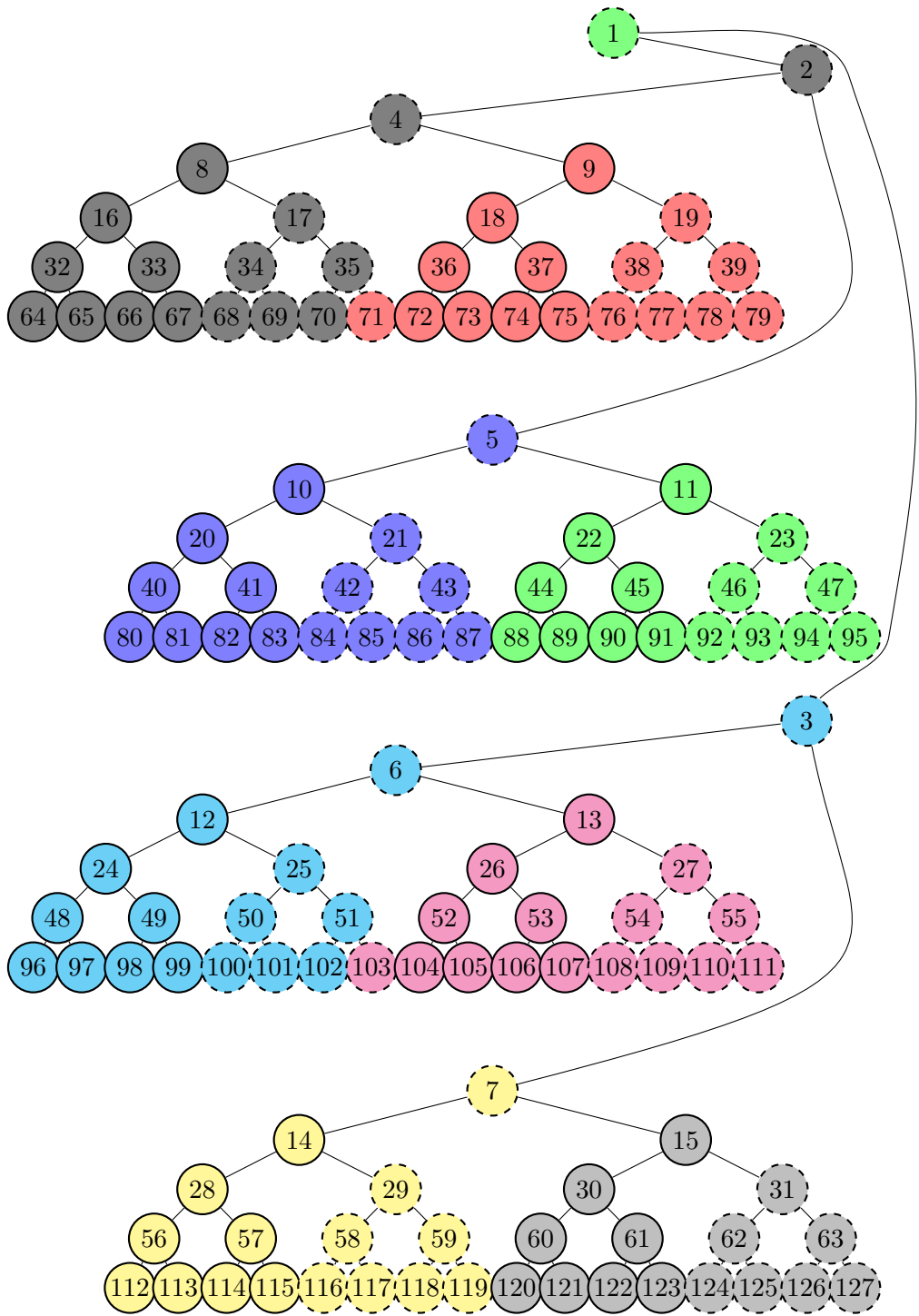

			\centering
			\begin{comment:upperBound}
				\begin{tikzpictureGuestGraph}
					\node[nodeGreenDashed] (node0) at (6.00, 5.75) {1};
					
					\node[nodeBlackDashed] (node1) at (8.00, 5.00) {2};
					
					\node[nodeBlackDashed] (node2) at (3.75, 4.00) {4};
		
					\node[nodeBlack] (node4) at (1.75, 3.00) {8};
					\node[nodeRed] (node5) at (5.75, 3.00) {9};
		
					\node[nodeBlack] (node8) at (0.75, 2.00) {16};
					\node[nodeBlackDashed] (node9) at (2.75, 2.00) {17};
					\node[nodeRed] (node10) at (4.75, 2.00) {18};
					\node[nodeRedDashed] (node11) at (6.75, 2.00) {19};
		
					\node[nodeBlack] (node16) at (0.25, 1.00) {32};
					\node[nodeBlack] (node17) at (1.25, 1.00) {33};
					\node[nodeBlackDashed] (node18) at (2.25, 1.00) {34};
					\node[nodeBlackDashed] (node19) at (3.25, 1.00) {35};
					\node[nodeRed] (node20) at (4.25, 1.00) {36};
					\node[nodeRed] (node21) at (5.25, 1.00) {37};
					\node[nodeRedDashed] (node22) at (6.25, 1.00) {38};
					\node[nodeRedDashed] (node23) at (7.25, 1.00) {39};
		
					\node[nodeBlack] (node32) at (0.00, 0.00) {64};
					\node[nodeBlack] (node33) at (0.50, 0.00) {65};
					\node[nodeBlack] (node34) at (1.00, 0.00) {66};
					\node[nodeBlack] (node35) at (1.50, 0.00) {67};
					\node[nodeBlackDashed] (node36) at (2.00, 0.00) {68};
					\node[nodeBlackDashed] (node37) at (2.50, 0.00) {69};
					\node[nodeBlackDashed] (node38) at (3.00, 0.00) {70};
					\node[nodeRedDashed] (node39) at (3.50, 0.00) {71};
					\node[nodeRed] (node40) at (4.00, 0.00) {72};
					\node[nodeRed] (node41) at (4.50, 0.00) {73};
					\node[nodeRed] (node42) at (5.00, 0.00) {74};
					\node[nodeRed] (node43) at (5.50, 0.00) {75};
					\node[nodeRedDashed] (node44) at (6.00, 0.00) {76};
					\node[nodeRedDashed] (node45) at (6.50, 0.00) {77};
					\node[nodeRedDashed] (node46) at (7.00, 0.00) {78};
					\node[nodeRedDashed] (node47) at (7.50, 0.00) {79};
		
					\node[nodeBlueDashed] (node3) at (4.75, -2.50) {5};
		
					\node[nodeBlue] (node6) at (2.75, -3.50) {10};
					\node[nodeGreen] (node7) at (6.75, -3.50) {11};
		
					\node[nodeBlue] (node12) at (1.75, -4.50) {20};
					\node[nodeBlueDashed] (node13) at (3.75, -4.50) {21};
					\node[nodeGreen] (node14) at (5.75, -4.50) {22};
					\node[nodeGreenDashed] (node15) at (7.75, -4.50) {23};
		
					\node[nodeBlue] (node24) at (1.25, -5.50) {40};
					\node[nodeBlue] (node25) at (2.25, -5.50) {41};
					\node[nodeBlueDashed] (node26) at (3.25, -5.50) {42};
					\node[nodeBlueDashed] (node27) at (4.25, -5.50) {43};
					\node[nodeGreen] (node28) at (5.25, -5.50) {44};
					\node[nodeGreen] (node29) at (6.25, -5.50) {45};
					\node[nodeGreenDashed] (node30) at (7.25, -5.50) {46};
					\node[nodeGreenDashed] (node31) at (8.25, -5.50) {47};
		
					\node[nodeBlue] (node48) at (1.00, -6.50) {80};
					\node[nodeBlue] (node49) at (1.50, -6.50) {81};
					\node[nodeBlue] (node50) at (2.00, -6.50) {82};
					\node[nodeBlue] (node51) at (2.50, -6.50) {83};
					\node[nodeBlueDashed] (node52) at (3.00, -6.50) {84};
					\node[nodeBlueDashed] (node53) at (3.50, -6.50) {85};
					\node[nodeBlueDashed] (node54) at (4.00, -6.50) {86};
					\node[nodeBlueDashed] (node55) at (4.50, -6.50) {87};
					\node[nodeGreen] (node56) at (5.00, -6.50) {88};
					\node[nodeGreen] (node57) at (5.50, -6.50) {89};
					\node[nodeGreen] (node58) at (6.00, -6.50) {90};
					\node[nodeGreen] (node59) at (6.50, -6.50) {91};
					\node[nodeGreenDashed] (node60) at (7.00, -6.50) {92};
					\node[nodeGreenDashed] (node61) at (7.50, -6.50) {93};
					\node[nodeGreenDashed] (node62) at (8.00, -6.50) {94};
					\node[nodeGreenDashed] (node63) at (8.50, -6.50) {95};
		
					\node[nodeCyanDashed] (node1a) at (8.00, 5.00 - 13.20) {3};
					
					\node[nodeCyanDashed] (node2a) at (3.75, 4.00 - 13.20) {6};
		
					\node[nodeCyan] (node4a) at (1.75, 3.00 - 13.20) {12};
					\node[nodeMagenta] (node5a) at (5.75, 3.00 - 13.20) {13};
		
					\node[nodeCyan] (node8a) at (0.75, 2.00 - 13.20) {24};
					\node[nodeCyanDashed] (node9a) at (2.75, 2.00 - 13.20) {25};
					\node[nodeMagenta] (node10a) at (4.75, 2.00 - 13.20) {26};
					\node[nodeMagentaDashed] (node11a) at (6.75, 2.00 - 13.20) {27};
		
					\node[nodeCyan] (node16a) at (0.25, 1.00 - 13.20) {48};
					\node[nodeCyan] (node17a) at (1.25, 1.00 - 13.20) {49};
					\node[nodeCyanDashed] (node18a) at (2.25, 1.00 - 13.20) {50};
					\node[nodeCyanDashed] (node19a) at (3.25, 1.00 - 13.20) {51};
					\node[nodeMagenta] (node20a) at (4.25, 1.00 - 13.20) {52};
					\node[nodeMagenta] (node21a) at (5.25, 1.00 - 13.20) {53};
					\node[nodeMagentaDashed] (node22a) at (6.25, 1.00 - 13.20) {54};
					\node[nodeMagentaDashed] (node23a) at (7.25, 1.00 - 13.20) {55};
		
					\node[nodeCyan] (node32a) at (0.00, 0.00 - 13.20) {96};
					\node[nodeCyan] (node33a) at (0.50, 0.00 - 13.20) {97};
					\node[nodeCyan] (node34a) at (1.00, 0.00 - 13.20) {98};
					\node[nodeCyan] (node35a) at (1.50, 0.00 - 13.20) {99};
					\node[nodeCyanDashed] (node36a) at (2.00, 0.00 - 13.20) {100};
					\node[nodeCyanDashed] (node37a) at (2.50, 0.00 - 13.20) {101};
					\node[nodeCyanDashed] (node38a) at (3.00, 0.00 - 13.20) {102};
					\node[nodeMagentaDashed] (node39a) at (3.50, 0.00 - 13.20) {103};
					\node[nodeMagenta] (node40a) at (4.00, 0.00 - 13.20) {104};
					\node[nodeMagenta] (node41a) at (4.50, 0.00 - 13.20) {105};
					\node[nodeMagenta] (node42a) at (5.00, 0.00 - 13.20) {106};
					\node[nodeMagenta] (node43a) at (5.50, 0.00 - 13.20) {107};
					\node[nodeMagentaDashed] (node44a) at (6.00, 0.00 - 13.20) {108};
					\node[nodeMagentaDashed] (node45a) at (6.50, 0.00 - 13.20) {109};
					\node[nodeMagentaDashed] (node46a) at (7.00, 0.00 - 13.20) {110};
					\node[nodeMagentaDashed] (node47a) at (7.50, 0.00 - 13.20) {111};
		
					\node[nodeYellowDashed] (node3a) at (4.75, -2.50 - 13.20) {7};
		
					\node[nodeYellow] (node6a) at (2.75, -3.50 - 13.20) {14};
					\node[nodeGrey] (node7a) at (6.75, -3.50 - 13.20) {15};
		
					\node[nodeYellow] (node12a) at (1.75, -4.50 - 13.20) {28};
					\node[nodeYellowDashed] (node13a) at (3.75, -4.50 - 13.20) {29};
					\node[nodeGrey] (node14a) at (5.75, -4.50 - 13.20) {30};
					\node[nodeGreyDashed] (node15a) at (7.75, -4.50 - 13.20) {31};
		
					\node[nodeYellow] (node24a) at (1.25, -5.50 - 13.20) {56};
					\node[nodeYellow] (node25a) at (2.25, -5.50 - 13.20) {57};
					\node[nodeYellowDashed] (node26a) at (3.25, -5.50 - 13.20) {58};
					\node[nodeYellowDashed] (node27a) at (4.25, -5.50 - 13.20) {59};
					\node[nodeGrey] (node28a) at (5.25, -5.50 - 13.20) {60};
					\node[nodeGrey] (node29a) at (6.25, -5.50 - 13.20) {61};
					\node[nodeGreyDashed] (node30a) at (7.25, -5.50 - 13.20) {62};
					\node[nodeGreyDashed] (node31a) at (8.25, -5.50 - 13.20) {63};
		
					\node[nodeYellow] (node48a) at (1.00, -6.50 - 13.20) {112};
					\node[nodeYellow] (node49a) at (1.50, -6.50 - 13.20) {113};
					\node[nodeYellow] (node50a) at (2.00, -6.50 - 13.20) {114};
					\node[nodeYellow] (node51a) at (2.50, -6.50 - 13.20) {115};
					\node[nodeYellowDashed] (node52a) at (3.00, -6.50 - 13.20) {116};
					\node[nodeYellowDashed] (node53a) at (3.50, -6.50 - 13.20) {117};
					\node[nodeYellowDashed] (node54a) at (4.00, -6.50 - 13.20) {118};
					\node[nodeYellowDashed] (node55a) at (4.50, -6.50 - 13.20) {119};
					\node[nodeGrey] (node56a) at (5.00, -6.50 - 13.20) {120};
					\node[nodeGrey] (node57a) at (5.50, -6.50 - 13.20) {121};
					\node[nodeGrey] (node58a) at (6.00, -6.50 - 13.20) {122};
					\node[nodeGrey] (node59a) at (6.50, -6.50 - 13.20) {123};
					\node[nodeGreyDashed] (node60a) at (7.00, -6.50 - 13.20) {124};
					\node[nodeGreyDashed] (node61a) at (7.50, -6.50 - 13.20) {125};
					\node[nodeGreyDashed] (node62a) at (8.00, -6.50 - 13.20) {126};
					\node[nodeGreyDashed] (node63a) at (8.50, -6.50 - 13.20) {127};
					
					\draw (node0) to (node1);
		
					\draw (node1) to (node2);
					\draw (node1) .. controls (8.50, 0.10) and (9.50, -0.10) .. (node3);
		
					\draw (node2) to (node4);
					\draw (node2) to (node5);
					\draw (node3) to (node6);
					\draw (node3) to (node7);
		
					\draw (node4) to (node8);
					\draw (node4) to (node9);
					\draw (node5) to (node10);
					\draw (node5) to (node11);
					\draw (node6) to (node12);
					\draw (node6) to (node13);
					\draw (node7) to (node14);
					\draw (node7) to (node15);
		
					\draw (node8) to (node16);
					\draw (node8) to (node17);
					\draw (node9) to (node18);
					\draw (node9) to (node19);
					\draw (node10) to (node20);
					\draw (node10) to (node21);
					\draw (node11) to (node22);
					\draw (node11) to (node23);
					\draw (node12) to (node24);
					\draw (node12) to (node25);
					\draw (node13) to (node26);
					\draw (node13) to (node27);
					\draw (node14) to (node28);
					\draw (node14) to (node29);
					\draw (node15) to (node30);
					\draw (node15) to (node31);
		
					\draw (node16) to (node32);
					\draw (node16) to (node33);
					\draw (node17) to (node34);
					\draw (node17) to (node35);
					\draw (node18) to (node36);
					\draw (node18) to (node37);
					\draw (node19) to (node38);
					\draw (node19) to (node39);
					\draw (node20) to (node40);
					\draw (node20) to (node41);
					\draw (node21) to (node42);
					\draw (node21) to (node43);
					\draw (node22) to (node44);
					\draw (node22) to (node45);
					\draw (node23) to (node46);
					\draw (node23) to (node47);
					\draw (node24) to (node48);
					\draw (node24) to (node49);
					\draw (node25) to (node50);
					\draw (node25) to (node51);
					\draw (node26) to (node52);
					\draw (node26) to (node53);
					\draw (node27) to (node54);
					\draw (node27) to (node55);
					\draw (node28) to (node56);
					\draw (node28) to (node57);
					\draw (node29) to (node58);
					\draw (node29) to (node59);
					\draw (node30) to (node60);
					\draw (node30) to (node61);
					\draw (node31) to (node62);
					\draw (node31) to (node63);
		
					\draw (node0) to [out=-0,in=150] (8.00, 5.60) to [out=-30,in=100] (8.50, 4.50) to [out=-80,in=95] (9.00, 1.00) to [out=-85,in=85] (8.85, -6.50) to [out=-95,in=60] (node1a);				
					\draw (node1a) to (node2a);
					\draw (node1a) .. controls (8.50, 0.10 - 13.20) and (9.50, -0.10 - 13.20) .. (node3a);
		
					\draw (node2a) to (node4a);
					\draw (node2a) to (node5a);
					\draw (node3a) to (node6a);
					\draw (node3a) to (node7a);
		
					\draw (node4a) to (node8a);
					\draw (node4a) to (node9a);
					\draw (node5a) to (node10a);
					\draw (node5a) to (node11a);
					\draw (node6a) to (node12a);
					\draw (node6a) to (node13a);
					\draw (node7a) to (node14a);
					\draw (node7a) to (node15a);
		
					\draw (node8a) to (node16a);
					\draw (node8a) to (node17a);
					\draw (node9a) to (node18a);
					\draw (node9a) to (node19a);
					\draw (node10a) to (node20a);
					\draw (node10a) to (node21a);
					\draw (node11a) to (node22a);
					\draw (node11a) to (node23a);
					\draw (node12a) to (node24a);
					\draw (node12a) to (node25a);
					\draw (node13a) to (node26a);
					\draw (node13a) to (node27a);
					\draw (node14a) to (node28a);
					\draw (node14a) to (node29a);
					\draw (node15a) to (node30a);
					\draw (node15a) to (node31a);
		
					\draw (node16a) to (node32a);
					\draw (node16a) to (node33a);
					\draw (node17a) to (node34a);
					\draw (node17a) to (node35a);
					\draw (node18a) to (node36a);
					\draw (node18a) to (node37a);
					\draw (node19a) to (node38a);
					\draw (node19a) to (node39a);
					\draw (node20a) to (node40a);
					\draw (node20a) to (node41a);
					\draw (node21a) to (node42a);
					\draw (node21a) to (node43a);
					\draw (node22a) to (node44a);
					\draw (node22a) to (node45a);
					\draw (node23a) to (node46a);
					\draw (node23a) to (node47a);
					\draw (node24a) to (node48a);
					\draw (node24a) to (node49a);
					\draw (node25a) to (node50a);
					\draw (node25a) to (node51a);
					\draw (node26a) to (node52a);
					\draw (node26a) to (node53a);
					\draw (node27a) to (node54a);
					\draw (node27a) to (node55a);
					\draw (node28a) to (node56a);
					\draw (node28a) to (node57a);
					\draw (node29a) to (node58a);
					\draw (node29a) to (node59a);
					\draw (node30a) to (node60a);
					\draw (node30a) to (node61a);
					\draw (node31a) to (node62a);
					\draw (node31a) to (node63a);
				\end{tikzpictureGuestGraph}
			\end{comment:upperBound}
			\vspace*{-0.27cm}
			\caption[Guest graph $G = (V, E)$ (binary regular tree of height $h_G = 6$). 
The colours are related to  the arrangement  the arrangement $\phi$ depicted in Figures~\ref{figure:arrangementPhiForHG6FirstPart} and \ref{figure:arrangementPhiForHG6SecondPart}.]{Guest graph $G = (V, E)$ (binary regular tree of height $h_G = 6$). The colours are related to  the arrangement $\phi_A$ depicted in Figures~\ref{figure:arrangementPhiAForHG6FirstPart} and \ref{figure:arrangementPhiAForHG6SecondPart}.}
			\label{figure:guestGraphGForHG6DepictingTheArrangementPhiA}
		\end{figure}
		\begin{figure}[p]
			\centering
			\begin{comment:upperBound}
				\begin{tikzpictureHostGraph}
					\node[otherNode] (node00) at (3.75, 6.00) {};
				
					\node[otherNode] (node0) at (6.00, 5.75) {};
					
					\node[otherNode] (node1) at (8.00, 5.00) {};
					
					\node[otherNode] (node2) at (3.75, 4.00) {};
		
					\node[otherNode] (node4) at (1.75, 3.00) {};
					\node[otherNode] (node5) at (5.75, 3.00) {};
		
					\node[otherNode] (node8) at (0.75, 2.00) {};
					\node[otherNode] (node9) at (2.75, 2.00) {};
					\node[otherNode] (node10) at (4.75, 2.00) {};
					\node[otherNode] (node11) at (6.75, 2.00) {};
		
					\node[otherNode] (node16) at (0.25, 1.00) {};
					\node[otherNode] (node17) at (1.25, 1.00) {};
					\node[otherNode] (node18) at (2.25, 1.00) {};
					\node[otherNode] (node19) at (3.25, 1.00) {};
					\node[otherNode] (node20) at (4.25, 1.00) {};
					\node[otherNode] (node21) at (5.25, 1.00) {};
					\node[otherNode] (node22) at (6.25, 1.00) {};
					\node[otherNode] (node23) at (7.25, 1.00) {};
		
					\node[leafUsedBlack] (node32) at (0.00, 0.00) {64};
					\node[leafUsedBlack] (node33) at (0.50, 0.00) {32};
					\node[leafUsedBlack] (node34) at (1.00, 0.00) {8};
					\node[leafUsedBlack] (node35) at (1.50, 0.00) {16};
					\node[leafUsedBlack] (node36) at (2.00, 0.00) {66};
					\node[leafUsedBlack] (node37) at (2.50, 0.00) {33};
					\node[leafUsedBlack] (node38) at (3.00, 0.00) {67};
					\node[leafUsedBlack] (node39) at (3.50, 0.00) {65};
					\node[leafUsedBlackDashed] (node40) at (4.00, 0.00) {68};
					\node[leafUsedBlackDashed] (node41) at (4.50, 0.00) {34};
					\node[leafUsedBlackDashed] (node42) at (5.00, 0.00) {69};
					\node[leafUsedBlackDashed] (node43) at (5.50, 0.00) {17};
					\node[leafUsedBlackDashed] (node44) at (6.00, 0.00) {70};
					\node[leafUsedBlackDashed] (node45) at (6.50, 0.00) {35};
					\node[leafUsedBlackDashed] (node46) at (7.00, 0.00) {2};
					\node[leafUsedBlackDashed] (node47) at (7.50, 0.00) {4};
		
					\node[otherNode] (node3) at (4.75, -2.50) {};
		
					\node[otherNode] (node6) at (2.75, -3.50) {};
					\node[otherNode] (node7) at (6.75, -3.50) {};
		
					\node[otherNode] (node12) at (1.75, -4.50) {};
					\node[otherNode] (node13) at (3.75, -4.50) {};
					\node[otherNode] (node14) at (5.75, -4.50) {};
					\node[otherNode] (node15) at (7.75, -4.50) {};
		
					\node[otherNode] (node24) at (1.25, -5.50) {};
					\node[otherNode] (node25) at (2.25, -5.50) {};
					\node[otherNode] (node26) at (3.25, -5.50) {};
					\node[otherNode] (node27) at (4.25, -5.50) {};
					\node[otherNode] (node28) at (5.25, -5.50) {};
					\node[otherNode] (node29) at (6.25, -5.50) {};
					\node[otherNode] (node30) at (7.25, -5.50) {};
					\node[otherNode] (node31) at (8.25, -5.50) {};
		
					\node[leafUsedRed] (node48) at (1.00, -6.50) {72};
					\node[leafUsedRed] (node49) at (1.50, -6.50) {36};
					\node[leafUsedRed] (node50) at (2.00, -6.50) {9};
					\node[leafUsedRed] (node51) at (2.50, -6.50) {18};
					\node[leafUsedRed] (node52) at (3.00, -6.50) {74};
					\node[leafUsedRed] (node53) at (3.50, -6.50) {37};
					\node[leafUsedRed] (node54) at (4.00, -6.50) {75};
					\node[leafUsedRed] (node55) at (4.50, -6.50) {73};
					\node[leafUsedRedDashed] (node56) at (5.00, -6.50) {76};
					\node[leafUsedRedDashed] (node57) at (5.50, -6.50) {38};
					\node[leafUsedRedDashed] (node58) at (6.00, -6.50) {77};
					\node[leafUsedRedDashed] (node59) at (6.50, -6.50) {19};
					\node[leafUsedRedDashed] (node60) at (7.00, -6.50) {78};
					\node[leafUsedRedDashed] (node61) at (7.50, -6.50) {39};
					\node[leafUsedRedDashed] (node62) at (8.00, -6.50) {79};
					\node[leafUsedRedDashed] (node63) at (8.50, -6.50) {71};
		
					\node[otherNode] (node1a) at (8.00, 5.00 - 13.20) {};
					
					\node[otherNode] (node2a) at (3.75, 4.00 - 13.20) {};
		
					\node[otherNode] (node4a) at (1.75, 3.00 - 13.20) {};
					\node[otherNode] (node5a) at (5.75, 3.00 - 13.20) {};
		
					\node[otherNode] (node8a) at (0.75, 2.00 - 13.20) {};
					\node[otherNode] (node9a) at (2.75, 2.00 - 13.20) {};
					\node[otherNode] (node10a) at (4.75, 2.00 - 13.20) {};
					\node[otherNode] (node11a) at (6.75, 2.00 - 13.20) {};
		
					\node[otherNode] (node16a) at (0.25, 1.00 - 13.20) {};
					\node[otherNode] (node17a) at (1.25, 1.00 - 13.20) {};
					\node[otherNode] (node18a) at (2.25, 1.00 - 13.20) {};
					\node[otherNode] (node19a) at (3.25, 1.00 - 13.20) {};
					\node[otherNode] (node20a) at (4.25, 1.00 - 13.20) {};
					\node[otherNode] (node21a) at (5.25, 1.00 - 13.20) {};
					\node[otherNode] (node22a) at (6.25, 1.00 - 13.20) {};
					\node[otherNode] (node23a) at (7.25, 1.00 - 13.20) {};
		
					\node[leafUsedBlue] (node32a) at (0.00, 0.00 - 13.20) {80};
					\node[leafUsedBlue] (node33a) at (0.50, 0.00 - 13.20) {40};
					\node[leafUsedBlue] (node34a) at (1.00, 0.00 - 13.20) {10};
					\node[leafUsedBlue] (node35a) at (1.50, 0.00 - 13.20) {20};
					\node[leafUsedBlue] (node36a) at (2.00, 0.00 - 13.20) {82};
					\node[leafUsedBlue] (node37a) at (2.50, 0.00 - 13.20) {41};
					\node[leafUsedBlue] (node38a) at (3.00, 0.00 - 13.20) {83};
					\node[leafUsedBlue] (node39a) at (3.50, 0.00 - 13.20) {81};
					\node[leafUsedBlueDashed] (node40a) at (4.00, 0.00 - 13.20) {84};
					\node[leafUsedBlueDashed] (node41a) at (4.50, 0.00 - 13.20) {42};
					\node[leafUsedBlueDashed] (node42a) at (5.00, 0.00 - 13.20) {85};
					\node[leafUsedBlueDashed] (node43a) at (5.50, 0.00 - 13.20) {21};
					\node[leafUsedBlueDashed] (node44a) at (6.00, 0.00 - 13.20) {86};
					\node[leafUsedBlueDashed] (node45a) at (6.50, 0.00 - 13.20) {43};
					\node[leafUsedBlueDashed] (node46a) at (7.00, 0.00 - 13.20) {87};
					\node[leafUsedBlueDashed] (node47a) at (7.50, 0.00 - 13.20) {5};
		
					\node[otherNode] (node3a) at (4.75, -2.50 - 13.20) {};
		
					\node[otherNode] (node6a) at (2.75, -3.50 - 13.20) {};
					\node[otherNode] (node7a) at (6.75, -3.50 - 13.20) {};
		
					\node[otherNode] (node12a) at (1.75, -4.50 - 13.20) {};
					\node[otherNode] (node13a) at (3.75, -4.50 - 13.20) {};
					\node[otherNode] (node14a) at (5.75, -4.50 - 13.20) {};
					\node[otherNode] (node15a) at (7.75, -4.50 - 13.20) {};
		
					\node[otherNode] (node24a) at (1.25, -5.50 - 13.20) {};
					\node[otherNode] (node25a) at (2.25, -5.50 - 13.20) {};
					\node[otherNode] (node26a) at (3.25, -5.50 - 13.20) {};
					\node[otherNode] (node27a) at (4.25, -5.50 - 13.20) {};
					\node[otherNode] (node28a) at (5.25, -5.50 - 13.20) {};
					\node[otherNode] (node29a) at (6.25, -5.50 - 13.20) {};
					\node[otherNode] (node30a) at (7.25, -5.50 - 13.20) {};
					\node[otherNode] (node31a) at (8.25, -5.50 - 13.20) {};
		
					\node[leafUsedGreen] (node48a) at (1.00, -6.50 - 13.20) {88};
					\node[leafUsedGreen] (node49a) at (1.50, -6.50 - 13.20) {44};
					\node[leafUsedGreen] (node50a) at (2.00, -6.50 - 13.20) {11};
					\node[leafUsedGreen] (node51a) at (2.50, -6.50 - 13.20) {22};
					\node[leafUsedGreen] (node52a) at (3.00, -6.50 - 13.20) {90};
					\node[leafUsedGreen] (node53a) at (3.50, -6.50 - 13.20) {45};
					\node[leafUsedGreen] (node54a) at (4.00, -6.50 - 13.20) {91};
					\node[leafUsedGreen] (node55a) at (4.50, -6.50 - 13.20) {89};
					\node[leafUsedGreenDashed] (node56a) at (5.00, -6.50 - 13.20) {92};
					\node[leafUsedGreenDashed] (node57a) at (5.50, -6.50 - 13.20) {46};
					\node[leafUsedGreenDashed] (node58a) at (6.00, -6.50 - 13.20) {93};
					\node[leafUsedGreenDashed] (node59a) at (6.50, -6.50 - 13.20) {23};
					\node[leafUsedGreenDashed] (node60a) at (7.00, -6.50 - 13.20) {94};
					\node[leafUsedGreenDashed] (node61a) at (7.50, -6.50 - 13.20) {47};
					\node[leafUsedGreenDashed] (node62a) at (8.00, -6.50 - 13.20) {95};
					\node[leafUsedGreenDashed] (node63a) at (8.50, -6.50 - 13.20) {1};
					
					\draw [-, black!50] (node00) to [out=30,in=180] (4.875, 6.50) to [out=0,in=180] (7.00, 6.50);
					\draw [dashed, black!50] (7.00, 6.50) to (9.52, 6.50);
					\draw [-, black!50] (node00) to (node0);
					
					\draw [-, black!50] (node0) to (node1);
		
					\draw [-, black!50] (node1) to (node2);
					\draw [-, black!50] (node1) .. controls (8.50, 0.10) and (9.50, -0.10) .. (node3);
		
					\draw [-, black!50] (node2) to (node4);
					\draw [-, black!50] (node2) to (node5);
					\draw [-, black!50] (node3) to (node6);
					\draw [-, black!50] (node3) to (node7);
		
					\draw [-, black!50] (node4) to (node8);
					\draw [-, black!50] (node4) to (node9);
					\draw [-, black!50] (node5) to (node10);
					\draw [-, black!50] (node5) to (node11);
					\draw [-, black!50] (node6) to (node12);
					\draw [-, black!50] (node6) to (node13);
					\draw [-, black!50] (node7) to (node14);
					\draw [-, black!50] (node7) to (node15);
		
					\draw [-, black!50] (node8) to (node16);
					\draw [-, black!50] (node8) to (node17);
					\draw [-, black!50] (node9) to (node18);
					\draw [-, black!50] (node9) to (node19);
					\draw [-, black!50] (node10) to (node20);
					\draw [-, black!50] (node10) to (node21);
					\draw [-, black!50] (node11) to (node22);
					\draw [-, black!50] (node11) to (node23);
					\draw [-, black!50] (node12) to (node24);
					\draw [-, black!50] (node12) to (node25);
					\draw [-, black!50] (node13) to (node26);
					\draw [-, black!50] (node13) to (node27);
					\draw [-, black!50] (node14) to (node28);
					\draw [-, black!50] (node14) to (node29);
					\draw [-, black!50] (node15) to (node30);
					\draw [-, black!50] (node15) to (node31);
		
					\draw [-, black!50] (node16) to (node32);
					\draw [-, black!50] (node16) to (node33);
					\draw [-, black!50] (node17) to (node34);
					\draw [-, black!50] (node17) to (node35);
					\draw [-, black!50] (node18) to (node36);
					\draw [-, black!50] (node18) to (node37);
					\draw [-, black!50] (node19) to (node38);
					\draw [-, black!50] (node19) to (node39);
					\draw [-, black!50] (node20) to (node40);
					\draw [-, black!50] (node20) to (node41);
					\draw [-, black!50] (node21) to (node42);
					\draw [-, black!50] (node21) to (node43);
					\draw [-, black!50] (node22) to (node44);
					\draw [-, black!50] (node22) to (node45);
					\draw [-, black!50] (node23) to (node46);
					\draw [-, black!50] (node23) to (node47);
					\draw [-, black!50] (node24) to (node48);
					\draw [-, black!50] (node24) to (node49);
					\draw [-, black!50] (node25) to (node50);
					\draw [-, black!50] (node25) to (node51);
					\draw [-, black!50] (node26) to (node52);
					\draw [-, black!50] (node26) to (node53);
					\draw [-, black!50] (node27) to (node54);
					\draw [-, black!50] (node27) to (node55);
					\draw [-, black!50] (node28) to (node56);
					\draw [-, black!50] (node28) to (node57);
					\draw [-, black!50] (node29) to (node58);
					\draw [-, black!50] (node29) to (node59);
					\draw [-, black!50] (node30) to (node60);
					\draw [-, black!50] (node30) to (node61);
					\draw [-, black!50] (node31) to (node62);
					\draw [-, black!50] (node31) to (node63);
		
					\draw [-, black!50] (node0) to [out=-0,in=150] (8.00, 5.60) to [out=-30,in=100] (8.50, 4.50) to [out=-80,in=95] (9.00, 1.00) to [out=-85,in=85] (8.85, -6.50) to [out=-95,in=60] (node1a);				
					\draw [-, black!50] (node1a) to (node2a);
					\draw [-, black!50] (node1a) .. controls (8.50, 0.10 - 13.20) and (9.50, -0.10 - 13.20) .. (node3a);
		
					\draw [-, black!50] (node2a) to (node4a);
					\draw [-, black!50] (node2a) to (node5a);
					\draw [-, black!50] (node3a) to (node6a);
					\draw [-, black!50] (node3a) to (node7a);
		
					\draw [-, black!50] (node4a) to (node8a);
					\draw [-, black!50] (node4a) to (node9a);
					\draw [-, black!50] (node5a) to (node10a);
					\draw [-, black!50] (node5a) to (node11a);
					\draw [-, black!50] (node6a) to (node12a);
					\draw [-, black!50] (node6a) to (node13a);
					\draw [-, black!50] (node7a) to (node14a);
					\draw [-, black!50] (node7a) to (node15a);
		
					\draw [-, black!50] (node8a) to (node16a);
					\draw [-, black!50] (node8a) to (node17a);
					\draw [-, black!50] (node9a) to (node18a);
					\draw [-, black!50] (node9a) to (node19a);
					\draw [-, black!50] (node10a) to (node20a);
					\draw [-, black!50] (node10a) to (node21a);
					\draw [-, black!50] (node11a) to (node22a);
					\draw [-, black!50] (node11a) to (node23a);
					\draw [-, black!50] (node12a) to (node24a);
					\draw [-, black!50] (node12a) to (node25a);
					\draw [-, black!50] (node13a) to (node26a);
					\draw [-, black!50] (node13a) to (node27a);
					\draw [-, black!50] (node14a) to (node28a);
					\draw [-, black!50] (node14a) to (node29a);
					\draw [-, black!50] (node15a) to (node30a);
					\draw [-, black!50] (node15a) to (node31a);
		
					\draw [-, black!50] (node16a) to (node32a);
					\draw [-, black!50] (node16a) to (node33a);
					\draw [-, black!50] (node17a) to (node34a);
					\draw [-, black!50] (node17a) to (node35a);
					\draw [-, black!50] (node18a) to (node36a);
					\draw [-, black!50] (node18a) to (node37a);
					\draw [-, black!50] (node19a) to (node38a);
					\draw [-, black!50] (node19a) to (node39a);
					\draw [-, black!50] (node20a) to (node40a);
					\draw [-, black!50] (node20a) to (node41a);
					\draw [-, black!50] (node21a) to (node42a);
					\draw [-, black!50] (node21a) to (node43a);
					\draw [-, black!50] (node22a) to (node44a);
					\draw [-, black!50] (node22a) to (node45a);
					\draw [-, black!50] (node23a) to (node46a);
					\draw [-, black!50] (node23a) to (node47a);
					\draw [-, black!50] (node24a) to (node48a);
					\draw [-, black!50] (node24a) to (node49a);
					\draw [-, black!50] (node25a) to (node50a);
					\draw [-, black!50] (node25a) to (node51a);
					\draw [-, black!50] (node26a) to (node52a);
					\draw [-, black!50] (node26a) to (node53a);
					\draw [-, black!50] (node27a) to (node54a);
					\draw [-, black!50] (node27a) to (node55a);
					\draw [-, black!50] (node28a) to (node56a);
					\draw [-, black!50] (node28a) to (node57a);
					\draw [-, black!50] (node29a) to (node58a);
					\draw [-, black!50] (node29a) to (node59a);
					\draw [-, black!50] (node30a) to (node60a);
					\draw [-, black!50] (node30a) to (node61a);
					\draw [-, black!50] (node31a) to (node62a);
					\draw [-, black!50] (node31a) to (node63a);

					\draw [-] (node63a) to [out=90,in=-90] (9.00, -6.50) to [out=90,in=-90] (node46);  
					\draw [-] (node63a) to [out=90,in=-90] (9.125, -6.50);  
					\draw [dashed] (9.125, -6.50) to [out=90,in=180] (9.52, -2.00);  
					
					\draw [-] (node46) to [out=-90,in=-90] (node47);  
					\draw [-] (node46) to [out=-90,in=90] (8.875, -6.50) to [out=-90,in=90] (node47a);  

					\draw [-] (node47) to [out=-90,in=0] ([shift={(-0.50, -1.00)}]node47) to [out=180,in=0] ([shift={(+0.50, -1.00)}]node34) to [out=180,in=-90] (node34);  
					\draw [-] (node47) to [out=-90,in=0] (7.00,-1.75) to [out=180,in=0] (2.50, -1.75) to [out=180,in=90] (2.00, -2.75) to [out=-90,in=90] (node50);  
					\draw [-] (node47a) to [out=-90,in=0] ([shift={(-0.50, -1.00)}]node47a) to [out=180,in=0] ([shift={(+0.50, -1.00)}]node34a) to [out=180,in=-90] (node34a);  
					\draw [-] (node47a) to [out=-90,in=0] (7.00,-14.95) to [out=180,in=0] (2.50, -14.95) to [out=180,in=90] (2.00, -15.95) to [out=-90,in=90] (node50a);  
					
					\draw [-] (node34) to [out=-90,in=-90] (node35);  
					\draw [-] (node34) to [out=-90,in=180] ([shift={(+0.50, -1.25)}]node34) to [out=0,in=180] ([shift={(-0.50, -1.25)}]node43) to [out=0,in=-90] (node43);  
					\draw [-] (node50) to [out=-90,in=-90] (node51);  
					\draw [-] (node50) to [out=-90,in=180] ([shift={(+0.50, -1.00)}]node50) to [out=0,in=180] ([shift={(-0.50, -1.00)}]node59) to [out=0,in=-90] (node59);  
					\draw [-] (node34a) to [out=-90,in=-90] (node35a);  
					\draw [-] (node34a) to [out=-90,in=180] ([shift={(+0.50, -1.25)}]node34a) to [out=0,in=180] ([shift={(-0.50, -1.25)}]node43a) to [out=0,in=-90] (node43a);  
					\draw [-] (node50a) to [out=-90,in=-90] (node51a);  
					\draw [-] (node50a) to [out=-90,in=180] ([shift={(+0.50, -1.00)}]node50a) to [out=0,in=180] ([shift={(-0.50, -1.00)}]node59a) to [out=0,in=-90] (node59a);  

					\draw [-] (node35) to [out=-90,in=-90] (node33);  
					\draw [-] (node35) to [out=-90,in=-90] (node37);  
					\draw [-] (node43) to [out=-90,in=-90] (node41);  
					\draw [-] (node43) to [out=-90,in=-90] (node45);  
					\draw [-] (node51) to [out=-90,in=-90] (node49);  
					\draw [-] (node51) to [out=-90,in=-90] (node53);  
					\draw [-] (node59) to [out=-90,in=-90] (node57);  
					\draw [-] (node59) to [out=-90,in=-90] (node61);  
					\draw [-] (node35a) to [out=-90,in=-90] (node33a);  
					\draw [-] (node35a) to [out=-90,in=-90] (node37a);  
					\draw [-] (node43a) to [out=-90,in=-90] (node41a);  
					\draw [-] (node43a) to [out=-90,in=-90] (node45a);  
					\draw [-] (node51a) to [out=-90,in=-90] (node49a);  
					\draw [-] (node51a) to [out=-90,in=-90] (node53a);  
					\draw [-] (node59a) to [out=-90,in=-90] (node57a);  
					\draw [-] (node59a) to [out=-90,in=-90] (node61a);  
					
					\draw [-] (node33) to [out=-90,in=-90] (node32);  
					\draw [-] (node33) to [out=-90,in=180] ([shift={(+0.50, -1.50)}]node33) to [out=0,in=180] ([shift={(-0.50, -1.50)}]node39) to [out=0,in=-90] (node39);  
					\draw [-] (node37) to [out=-90,in=-90] (node36);  
					\draw [-] (node37) to [out=-90,in=-90] (node38);  
					\draw [-] (node41) to [out=-90,in=-90] (node40);  
					\draw [-] (node41) to [out=-90,in=-90] (node42);  
					\draw [-] (node45) to [out=-90,in=-90] (node44);  
					\draw [-] (node45) to [out=-90,in=90] (node63);  
					\draw [-] (node49) to [out=-90,in=-90] (node48);  
					\draw [-] (node49) to [out=-90,in=180] ([shift={(+0.50, -1.25)}]node49) to [out=0,in=180] ([shift={(-0.50, -1.25)}]node55) to [out=0,in=-90] (node55);  
					\draw [-] (node53) to [out=-90,in=-90] (node52);  
					\draw [-] (node53) to [out=-90,in=-90] (node54);  
					\draw [-] (node57) to [out=-90,in=-90] (node56);  
					\draw [-] (node57) to [out=-90,in=-90] (node58);  
					\draw [-] (node61) to [out=-90,in=-90] (node60);  
					\draw [-] (node61) to [out=-90,in=-90] (node62);  
					\draw [-] (node33a) to [out=-90,in=-90] (node32a);  
					\draw [-] (node33a) to [out=-90,in=180] ([shift={(+0.50, -1.50)}]node33a) to [out=0,in=180] ([shift={(-0.50, -1.50)}]node39a) to [out=0,in=-90] (node39a);  
					\draw [-] (node37a) to [out=-90,in=-90] (node36a);  
					\draw [-] (node37a) to [out=-90,in=-90] (node38a);  
					\draw [-] (node41a) to [out=-90,in=-90] (node40a);  
					\draw [-] (node41a) to [out=-90,in=-90] (node42a);  
					\draw [-] (node45a) to [out=-90,in=-90] (node44a);  
					\draw [-] (node45a) to [out=-90,in=-90] (node46a);  
					\draw [-] (node49a) to [out=-90,in=-90] (node48a);  
					\draw [-] (node49a) to [out=-90,in=180] ([shift={(+0.50, -1.25)}]node49a) to [out=0,in=180] ([shift={(-0.50, -1.25)}]node55a) to [out=0,in=-90] (node55a);  
					\draw [-] (node53a) to [out=-90,in=-90] (node52a);  
					\draw [-] (node53a) to [out=-90,in=-90] (node54a);  
					\draw [-] (node57a) to [out=-90,in=-90] (node56a);  
					\draw [-] (node57a) to [out=-90,in=-90] (node58a);  
					\draw [-] (node61a) to [out=-90,in=-90] (node60a);  
					\draw [-] (node61a) to [out=-90,in=-90] (node62a);  
				\end{tikzpictureHostGraph}
			\end{comment:upperBound}
			\vspace*{-0.27cm}
			\caption[Arrangement $\phi_A$ obtained from Algorithm~\ref{algorithm:arrangementPhiU} for the guest graph $G = (V, E)$ depicted in Figure~\ref{figure:guestGraphGForHG6DepictingTheArrangementPhiA} (binary regular tree of height $h_G = 6$) -- first part.]{Arrangement $\phi_A$ obtained from Algorithm~\ref{algorithm:arrangementPhiU} for the guest graph $G = (V, E)$ depicted in Figure~\ref{figure:guestGraphGForHG6DepictingTheArrangementPhiA} -- first part. Its objective function  value is $OV(G, 2, \phi_A) = 586$.}
			\label{figure:arrangementPhiAForHG6FirstPart}
		\end{figure}
		\begin{figure}[p]
			\centering
			\begin{comment:upperBound}
				\begin{tikzpictureHostGraph}
					\node[otherNode] (node0) at (6.00, 5.75) {};
					
					\node[otherNode] (node1) at (8.00, 5.00) {};
					
					\node[otherNode] (node2) at (3.75, 4.00) {};
		
					\node[otherNode] (node4) at (1.75, 3.00) {};
					\node[otherNode] (node5) at (5.75, 3.00) {};
		
					\node[otherNode] (node8) at (0.75, 2.00) {};
					\node[otherNode] (node9) at (2.75, 2.00) {};
					\node[otherNode] (node10) at (4.75, 2.00) {};
					\node[otherNode] (node11) at (6.75, 2.00) {};
		
					\node[otherNode] (node16) at (0.25, 1.00) {};
					\node[otherNode] (node17) at (1.25, 1.00) {};
					\node[otherNode] (node18) at (2.25, 1.00) {};
					\node[otherNode] (node19) at (3.25, 1.00) {};
					\node[otherNode] (node20) at (4.25, 1.00) {};
					\node[otherNode] (node21) at (5.25, 1.00) {};
					\node[otherNode] (node22) at (6.25, 1.00) {};
					\node[otherNode] (node23) at (7.25, 1.00) {};
		
					\node[leafUsedCyan] (node32) at (0.00, 0.00) {96};
					\node[leafUsedCyan] (node33) at (0.50, 0.00) {48};
					\node[leafUsedCyan] (node34) at (1.00, 0.00) {12};
					\node[leafUsedCyan] (node35) at (1.50, 0.00) {24};
					\node[leafUsedCyan] (node36) at (2.00, 0.00) {98};
					\node[leafUsedCyan] (node37) at (2.50, 0.00) {49};
					\node[leafUsedCyan] (node38) at (3.00, 0.00) {99};
					\node[leafUsedCyan] (node39) at (3.50, 0.00) {97};
					\node[leafUsedCyanDashed] (node40) at (4.00, 0.00) {100};
					\node[leafUsedCyanDashed] (node41) at (4.50, 0.00) {50};
					\node[leafUsedCyanDashed] (node42) at (5.00, 0.00) {101};
					\node[leafUsedCyanDashed] (node43) at (5.50, 0.00) {25};
					\node[leafUsedCyanDashed] (node44) at (6.00, 0.00) {102};
					\node[leafUsedCyanDashed] (node45) at (6.50, 0.00) {51};
					\node[leafUsedCyanDashed] (node46) at (7.00, 0.00) {3};
					\node[leafUsedCyanDashed] (node47) at (7.50, 0.00) {6};
		
					\node[otherNode] (node3) at (4.75, -2.50) {};
		
					\node[otherNode] (node6) at (2.75, -3.50) {};
					\node[otherNode] (node7) at (6.75, -3.50) {};
		
					\node[otherNode] (node12) at (1.75, -4.50) {};
					\node[otherNode] (node13) at (3.75, -4.50) {};
					\node[otherNode] (node14) at (5.75, -4.50) {};
					\node[otherNode] (node15) at (7.75, -4.50) {};
		
					\node[otherNode] (node24) at (1.25, -5.50) {};
					\node[otherNode] (node25) at (2.25, -5.50) {};
					\node[otherNode] (node26) at (3.25, -5.50) {};
					\node[otherNode] (node27) at (4.25, -5.50) {};
					\node[otherNode] (node28) at (5.25, -5.50) {};
					\node[otherNode] (node29) at (6.25, -5.50) {};
					\node[otherNode] (node30) at (7.25, -5.50) {};
					\node[otherNode] (node31) at (8.25, -5.50) {};
		
					\node[leafUsedMagenta] (node48) at (1.00, -6.50) {104};
					\node[leafUsedMagenta] (node49) at (1.50, -6.50) {52};
					\node[leafUsedMagenta] (node50) at (2.00, -6.50) {13};
					\node[leafUsedMagenta] (node51) at (2.50, -6.50) {26};
					\node[leafUsedMagenta] (node52) at (3.00, -6.50) {106};
					\node[leafUsedMagenta] (node53) at (3.50, -6.50) {53};
					\node[leafUsedMagenta] (node54) at (4.00, -6.50) {107};
					\node[leafUsedMagenta] (node55) at (4.50, -6.50) {105};
					\node[leafUsedMagentaDashed] (node56) at (5.00, -6.50) {108};
					\node[leafUsedMagentaDashed] (node57) at (5.50, -6.50) {54};
					\node[leafUsedMagentaDashed] (node58) at (6.00, -6.50) {109};
					\node[leafUsedMagentaDashed] (node59) at (6.50, -6.50) {27};
					\node[leafUsedMagentaDashed] (node60) at (7.00, -6.50) {110};
					\node[leafUsedMagentaDashed] (node61) at (7.50, -6.50) {55};
					\node[leafUsedMagentaDashed] (node62) at (8.00, -6.50) {111};
					\node[leafUsedMagentaDashed] (node63) at (8.50, -6.50) {103};
		
					\node[otherNode] (node1a) at (8.00, 5.00 - 13.20) {};
					
					\node[otherNode] (node2a) at (3.75, 4.00 - 13.20) {};
		
					\node[otherNode] (node4a) at (1.75, 3.00 - 13.20) {};
					\node[otherNode] (node5a) at (5.75, 3.00 - 13.20) {};
		
					\node[otherNode] (node8a) at (0.75, 2.00 - 13.20) {};
					\node[otherNode] (node9a) at (2.75, 2.00 - 13.20) {};
					\node[otherNode] (node10a) at (4.75, 2.00 - 13.20) {};
					\node[otherNode] (node11a) at (6.75, 2.00 - 13.20) {};
		
					\node[otherNode] (node16a) at (0.25, 1.00 - 13.20) {};
					\node[otherNode] (node17a) at (1.25, 1.00 - 13.20) {};
					\node[otherNode] (node18a) at (2.25, 1.00 - 13.20) {};
					\node[otherNode] (node19a) at (3.25, 1.00 - 13.20) {};
					\node[otherNode] (node20a) at (4.25, 1.00 - 13.20) {};
					\node[otherNode] (node21a) at (5.25, 1.00 - 13.20) {};
					\node[otherNode] (node22a) at (6.25, 1.00 - 13.20) {};
					\node[otherNode] (node23a) at (7.25, 1.00 - 13.20) {};
		
					\node[leafUsedYellow] (node32a) at (0.00, 0.00 - 13.20) {112};
					\node[leafUsedYellow] (node33a) at (0.50, 0.00 - 13.20) {56};
					\node[leafUsedYellow] (node34a) at (1.00, 0.00 - 13.20) {14};
					\node[leafUsedYellow] (node35a) at (1.50, 0.00 - 13.20) {28};
					\node[leafUsedYellow] (node36a) at (2.00, 0.00 - 13.20) {114};
					\node[leafUsedYellow] (node37a) at (2.50, 0.00 - 13.20) {57};
					\node[leafUsedYellow] (node38a) at (3.00, 0.00 - 13.20) {115};
					\node[leafUsedYellow] (node39a) at (3.50, 0.00 - 13.20) {113};
					\node[leafUsedYellowDashed] (node40a) at (4.00, 0.00 - 13.20) {116};
					\node[leafUsedYellowDashed] (node41a) at (4.50, 0.00 - 13.20) {58};
					\node[leafUsedYellowDashed] (node42a) at (5.00, 0.00 - 13.20) {117};
					\node[leafUsedYellowDashed] (node43a) at (5.50, 0.00 - 13.20) {29};
					\node[leafUsedYellowDashed] (node44a) at (6.00, 0.00 - 13.20) {118};
					\node[leafUsedYellowDashed] (node45a) at (6.50, 0.00 - 13.20) {59};
					\node[leafUsedYellowDashed] (node46a) at (7.00, 0.00 - 13.20) {119};
					\node[leafUsedYellowDashed] (node47a) at (7.50, 0.00 - 13.20) {7};
		
					\node[otherNode] (node3a) at (4.75, -2.50 - 13.20) {};
		
					\node[otherNode] (node6a) at (2.75, -3.50 - 13.20) {};
					\node[otherNode] (node7a) at (6.75, -3.50 - 13.20) {};
		
					\node[otherNode] (node12a) at (1.75, -4.50 - 13.20) {};
					\node[otherNode] (node13a) at (3.75, -4.50 - 13.20) {};
					\node[otherNode] (node14a) at (5.75, -4.50 - 13.20) {};
					\node[otherNode] (node15a) at (7.75, -4.50 - 13.20) {};
		
					\node[otherNode] (node24a) at (1.25, -5.50 - 13.20) {};
					\node[otherNode] (node25a) at (2.25, -5.50 - 13.20) {};
					\node[otherNode] (node26a) at (3.25, -5.50 - 13.20) {};
					\node[otherNode] (node27a) at (4.25, -5.50 - 13.20) {};
					\node[otherNode] (node28a) at (5.25, -5.50 - 13.20) {};
					\node[otherNode] (node29a) at (6.25, -5.50 - 13.20) {};
					\node[otherNode] (node30a) at (7.25, -5.50 - 13.20) {};
					\node[otherNode] (node31a) at (8.25, -5.50 - 13.20) {};
		
					\node[leafUsedGrey] (node48a) at (1.00, -6.50 - 13.20) {120};
					\node[leafUsedGrey] (node49a) at (1.50, -6.50 - 13.20) {60};
					\node[leafUsedGrey] (node50a) at (2.00, -6.50 - 13.20) {15};
					\node[leafUsedGrey] (node51a) at (2.50, -6.50 - 13.20) {30};
					\node[leafUsedGrey] (node52a) at (3.00, -6.50 - 13.20) {122};
					\node[leafUsedGrey] (node53a) at (3.50, -6.50 - 13.20) {61};
					\node[leafUsedGrey] (node54a) at (4.00, -6.50 - 13.20) {123};
					\node[leafUsedGrey] (node55a) at (4.50, -6.50 - 13.20) {121};
					\node[leafUsedGreyDashed] (node56a) at (5.00, -6.50 - 13.20) {124};
					\node[leafUsedGreyDashed] (node57a) at (5.50, -6.50 - 13.20) {62};
					\node[leafUsedGreyDashed] (node58a) at (6.00, -6.50 - 13.20) {125};
					\node[leafUsedGreyDashed] (node59a) at (6.50, -6.50 - 13.20) {31};
					\node[leafUsedGreyDashed] (node60a) at (7.00, -6.50 - 13.20) {126};
					\node[leafUsedGreyDashed] (node61a) at (7.50, -6.50 - 13.20) {63};
					\node[leafUsedGreyDashed] (node62a) at (8.00, -6.50 - 13.20) {127};
					\node[leafUnused] (node63a) at (8.50, -6.50 - 13.20) {};
					
					\draw [-, black!50] (node0) to [out=150,in=0] (4.875, 6.50) to [out=180,in=0] (2.23, 6.50);
					\draw [dashed, black!50] (2.23, 6.50) to (-0.29, 6.50);
					
					\draw [-, black!50] (node0) to (node1);
		
					\draw [-, black!50] (node1) to (node2);
					\draw [-, black!50] (node1) .. controls (8.50, 0.10) and (9.50, -0.10) .. (node3);
		
					\draw [-, black!50] (node2) to (node4);
					\draw [-, black!50] (node2) to (node5);
					\draw [-, black!50] (node3) to (node6);
					\draw [-, black!50] (node3) to (node7);
		
					\draw [-, black!50] (node4) to (node8);
					\draw [-, black!50] (node4) to (node9);
					\draw [-, black!50] (node5) to (node10);
					\draw [-, black!50] (node5) to (node11);
					\draw [-, black!50] (node6) to (node12);
					\draw [-, black!50] (node6) to (node13);
					\draw [-, black!50] (node7) to (node14);
					\draw [-, black!50] (node7) to (node15);
		
					\draw [-, black!50] (node8) to (node16);
					\draw [-, black!50] (node8) to (node17);
					\draw [-, black!50] (node9) to (node18);
					\draw [-, black!50] (node9) to (node19);
					\draw [-, black!50] (node10) to (node20);
					\draw [-, black!50] (node10) to (node21);
					\draw [-, black!50] (node11) to (node22);
					\draw [-, black!50] (node11) to (node23);
					\draw [-, black!50] (node12) to (node24);
					\draw [-, black!50] (node12) to (node25);
					\draw [-, black!50] (node13) to (node26);
					\draw [-, black!50] (node13) to (node27);
					\draw [-, black!50] (node14) to (node28);
					\draw [-, black!50] (node14) to (node29);
					\draw [-, black!50] (node15) to (node30);
					\draw [-, black!50] (node15) to (node31);
		
					\draw [-, black!50] (node16) to (node32);
					\draw [-, black!50] (node16) to (node33);
					\draw [-, black!50] (node17) to (node34);
					\draw [-, black!50] (node17) to (node35);
					\draw [-, black!50] (node18) to (node36);
					\draw [-, black!50] (node18) to (node37);
					\draw [-, black!50] (node19) to (node38);
					\draw [-, black!50] (node19) to (node39);
					\draw [-, black!50] (node20) to (node40);
					\draw [-, black!50] (node20) to (node41);
					\draw [-, black!50] (node21) to (node42);
					\draw [-, black!50] (node21) to (node43);
					\draw [-, black!50] (node22) to (node44);
					\draw [-, black!50] (node22) to (node45);
					\draw [-, black!50] (node23) to (node46);
					\draw [-, black!50] (node23) to (node47);
					\draw [-, black!50] (node24) to (node48);
					\draw [-, black!50] (node24) to (node49);
					\draw [-, black!50] (node25) to (node50);
					\draw [-, black!50] (node25) to (node51);
					\draw [-, black!50] (node26) to (node52);
					\draw [-, black!50] (node26) to (node53);
					\draw [-, black!50] (node27) to (node54);
					\draw [-, black!50] (node27) to (node55);
					\draw [-, black!50] (node28) to (node56);
					\draw [-, black!50] (node28) to (node57);
					\draw [-, black!50] (node29) to (node58);
					\draw [-, black!50] (node29) to (node59);
					\draw [-, black!50] (node30) to (node60);
					\draw [-, black!50] (node30) to (node61);
					\draw [-, black!50] (node31) to (node62);
					\draw [-, black!50] (node31) to (node63);
		
					\draw [-, black!50] (node0) to [out=-0,in=150] (8.00, 5.60) to [out=-30,in=100] (8.50, 4.50) to [out=-80,in=95] (9.00, 1.00) to [out=-85,in=85] (8.85, -6.50) to [out=-95,in=60] (node1a);				
					\draw [-, black!50] (node1a) to (node2a);
					\draw [-, black!50] (node1a) .. controls (8.50, 0.10 - 13.20) and (9.50, -0.10 - 13.20) .. (node3a);
		
					\draw [-, black!50] (node2a) to (node4a);
					\draw [-, black!50] (node2a) to (node5a);
					\draw [-, black!50] (node3a) to (node6a);
					\draw [-, black!50] (node3a) to (node7a);
		
					\draw [-, black!50] (node4a) to (node8a);
					\draw [-, black!50] (node4a) to (node9a);
					\draw [-, black!50] (node5a) to (node10a);
					\draw [-, black!50] (node5a) to (node11a);
					\draw [-, black!50] (node6a) to (node12a);
					\draw [-, black!50] (node6a) to (node13a);
					\draw [-, black!50] (node7a) to (node14a);
					\draw [-, black!50] (node7a) to (node15a);
		
					\draw [-, black!50] (node8a) to (node16a);
					\draw [-, black!50] (node8a) to (node17a);
					\draw [-, black!50] (node9a) to (node18a);
					\draw [-, black!50] (node9a) to (node19a);
					\draw [-, black!50] (node10a) to (node20a);
					\draw [-, black!50] (node10a) to (node21a);
					\draw [-, black!50] (node11a) to (node22a);
					\draw [-, black!50] (node11a) to (node23a);
					\draw [-, black!50] (node12a) to (node24a);
					\draw [-, black!50] (node12a) to (node25a);
					\draw [-, black!50] (node13a) to (node26a);
					\draw [-, black!50] (node13a) to (node27a);
					\draw [-, black!50] (node14a) to (node28a);
					\draw [-, black!50] (node14a) to (node29a);
					\draw [-, black!50] (node15a) to (node30a);
					\draw [-, black!50] (node15a) to (node31a);
		
					\draw [-, black!50] (node16a) to (node32a);
					\draw [-, black!50] (node16a) to (node33a);
					\draw [-, black!50] (node17a) to (node34a);
					\draw [-, black!50] (node17a) to (node35a);
					\draw [-, black!50] (node18a) to (node36a);
					\draw [-, black!50] (node18a) to (node37a);
					\draw [-, black!50] (node19a) to (node38a);
					\draw [-, black!50] (node19a) to (node39a);
					\draw [-, black!50] (node20a) to (node40a);
					\draw [-, black!50] (node20a) to (node41a);
					\draw [-, black!50] (node21a) to (node42a);
					\draw [-, black!50] (node21a) to (node43a);
					\draw [-, black!50] (node22a) to (node44a);
					\draw [-, black!50] (node22a) to (node45a);
					\draw [-, black!50] (node23a) to (node46a);
					\draw [-, black!50] (node23a) to (node47a);
					\draw [-, black!50] (node24a) to (node48a);
					\draw [-, black!50] (node24a) to (node49a);
					\draw [-, black!50] (node25a) to (node50a);
					\draw [-, black!50] (node25a) to (node51a);
					\draw [-, black!50] (node26a) to (node52a);
					\draw [-, black!50] (node26a) to (node53a);
					\draw [-, black!50] (node27a) to (node54a);
					\draw [-, black!50] (node27a) to (node55a);
					\draw [-, black!50] (node28a) to (node56a);
					\draw [-, black!50] (node28a) to (node57a);
					\draw [-, black!50] (node29a) to (node58a);
					\draw [-, black!50] (node29a) to (node59a);
					\draw [-, black!50] (node30a) to (node60a);
					\draw [-, black!50] (node30a) to (node61a);
					\draw [-, black!50] (node31a) to (node62a);
					\draw [-, black!50] (node31a) to (node63a);

						\draw [-] (node46) to [out=-90,in=90] (7.00, -0.75) to [out=-90,in=0] (6.50, -2.00) to [out=180,in=0] (2.23, -2.00);  
					\draw [dashed] (2.23, -2.00) -- (-0.29, -2.00);  

					\draw [-] (node46) to [out=-90,in=-90] (node47);  
					\draw [-] (node46) to [out=-90,in=90] (8.875, -6.50) to [out=-90,in=90] (node47a);  

						\draw [-] (node47) to [out=-90,in=0] ([shift={(-0.50, -1.00)}]node47) to [out=180,in=0] ([shift={(+0.50, -1.00)}]node34) to [out=180,in=-90] (node34);  
					\draw [-] (node47) to [out=-90,in=0] (7.00,-1.75) to [out=180,in=0] (2.50, -1.75) to [out=180,in=90] (2.00, -2.75) to [out=-90,in=90] (node50);  
					\draw [-] (node47a) to [out=-90,in=0] ([shift={(-0.50, -1.00)}]node47a) to [out=180,in=0] ([shift={(+0.50, -1.00)}]node34a) to [out=180,in=-90] (node34a);  
					\draw [-] (node47a) to [out=-90,in=0] (7.00,-14.95) to [out=180,in=0] (2.50, -14.95) to [out=180,in=90] (2.00, -15.95) to [out=-90,in=90] (node50a);  
					
					\draw [-] (node34) to [out=-90,in=-90] (node35);  
					\draw [-] (node34) to [out=-90,in=180] ([shift={(+0.50, -1.25)}]node34) to [out=0,in=180] ([shift={(-0.50, -1.25)}]node43) to [out=0,in=-90] (node43);  
					\draw [-] (node50) to [out=-90,in=-90] (node51);  
					\draw [-] (node50) to [out=-90,in=180] ([shift={(+0.50, -1.00)}]node50) to [out=0,in=180] ([shift={(-0.50, -1.00)}]node59) to [out=0,in=-90] (node59);  
					\draw [-] (node34a) to [out=-90,in=-90] (node35a);  
					\draw [-] (node34a) to [out=-90,in=180] ([shift={(+0.50, -1.25)}]node34a) to [out=0,in=180] ([shift={(-0.50, -1.25)}]node43a) to [out=0,in=-90] (node43a);  
					\draw [-] (node50a) to [out=-90,in=-90] (node51a);  
					\draw [-] (node50a) to [out=-90,in=180] ([shift={(+0.50, -1.00)}]node50a) to [out=0,in=180] ([shift={(-0.50, -1.00)}]node59a) to [out=0,in=-90] (node59a);  

					\draw [-] (node35) to [out=-90,in=-90] (node37);  
					\draw [-] (node35) to [out=-90,in=-90] (node33);  
					\draw [-] (node43) to [out=-90,in=-90] (node41);  
					\draw [-] (node43) to [out=-90,in=-90] (node45);  
					\draw [-] (node51) to [out=-90,in=-90] (node49);  
					\draw [-] (node51) to [out=-90,in=-90] (node53);  
					\draw [-] (node59) to [out=-90,in=-90] (node57);  
					\draw [-] (node59) to [out=-90,in=-90] (node61);  
					\draw [-] (node35a) to [out=-90,in=-90] (node33a);  
					\draw [-] (node35a) to [out=-90,in=-90] (node37a);  
					\draw [-] (node43a) to [out=-90,in=-90] (node41a);  
					\draw [-] (node43a) to [out=-90,in=-90] (node45a);  
					\draw [-] (node51a) to [out=-90,in=-90] (node49a);  
					\draw [-] (node51a) to [out=-90,in=-90] (node53a);  
					\draw [-] (node59a) to [out=-90,in=-90] (node57a);  
					\draw [-] (node59a) to [out=-90,in=-90] (node61a);  
					
					\draw [-] (node33) to [out=-90,in=-90] (node32);  
					\draw [-] (node33) to [out=-90,in=180] ([shift={(+0.50, -1.50)}]node33) to [out=0,in=180] ([shift={(-0.50, -1.50)}]node39) to [out=0,in=-90] (node39);  
					\draw [-] (node37) to [out=-90,in=-90] (node36);  
					\draw [-] (node37) to [out=-90,in=-90] (node38);  
					\draw [-] (node41) to [out=-90,in=-90] (node40);  
					\draw [-] (node41) to [out=-90,in=-90] (node42);  
					\draw [-] (node45) to [out=-90,in=-90] (node44);  
					\draw [-] (node45) to [out=-90,in=90] (node63);  
					\draw [-] (node49) to [out=-90,in=-90] (node48);  
					\draw [-] (node49) to [out=-90,in=180] ([shift={(+0.50, -1.25)}]node49) to [out=0,in=180] ([shift={(-0.50, -1.25)}]node55) to [out=0,in=-90] (node55);  
					\draw [-] (node53) to [out=-90,in=-90] (node52);  
					\draw [-] (node53) to [out=-90,in=-90] (node54);  
					\draw [-] (node57) to [out=-90,in=-90] (node56);  
					\draw [-] (node57) to [out=-90,in=-90] (node58);  
					\draw [-] (node61) to [out=-90,in=-90] (node60);  
					\draw [-] (node61) to [out=-90,in=-90] (node62);  
					\draw [-] (node33a) to [out=-90,in=-90] (node32a);  
					\draw [-] (node33a) to [out=-90,in=180] ([shift={(+0.50, -1.50)}]node33a) to [out=0,in=180] ([shift={(-0.50, -1.50)}]node39a) to [out=0,in=-90] (node39a);  
					\draw [-] (node37a) to [out=-90,in=-90] (node36a);  
					\draw [-] (node37a) to [out=-90,in=-90] (node38a);  
					\draw [-] (node41a) to [out=-90,in=-90] (node40a);  
					\draw [-] (node41a) to [out=-90,in=-90] (node42a);  
					\draw [-] (node45a) to [out=-90,in=-90] (node44a);  
					\draw [-] (node45a) to [out=-90,in=-90] (node46a);  
					\draw [-] (node49a) to [out=-90,in=-90] (node48a);  
					\draw [-] (node49a) to [out=-90,in=180] ([shift={(+0.50, -1.25)}]node49a) to [out=0,in=180] ([shift={(-0.50, -1.25)}]node55a) to [out=0,in=-90] (node55a);  
					\draw [-] (node53a) to [out=-90,in=-90] (node52a);  
					\draw [-] (node53a) to [out=-90,in=-90] (node54a);  
					\draw [-] (node57a) to [out=-90,in=-90] (node56a);  
					\draw [-] (node57a) to [out=-90,in=-90] (node58a);  
					\draw [-] (node61a) to [out=-90,in=-90] (node60a);  
					\draw [-] (node61a) to [out=-90,in=-90] (node62a);  
				\end{tikzpictureHostGraph}
			\end{comment:upperBound}
			\vspace*{-0.27cm}
			\caption[Arrangement $\phi_A$ obtained from Algorithm~\ref{algorithm:arrangementPhiU} for the guest graph $G = (V, E)$ depicted in Figure~\ref{figure:guestGraphGForHG6DepictingTheArrangementPhiA} (binary regular tree of height $h_G = 6$) -- second part.]{Arrangement $\phi_A$ obtained from Algorithm~\ref{algorithm:arrangementPhiU} for the guest graph $G = (V, E)$ depicted in Figure~\ref{figure:guestGraphGForHG6DepictingTheArrangementPhiA} -- second part. Its objective function value is $OV(G, 2, \phi_A) = 586$.}
			\label{figure:arrangementPhiAForHG6SecondPart}
		\end{figure}
		\begin{figure}[p]
			\centering
			\begin{comment:upperBound}
				\begin{tikzpictureGuestGraph}
					\node[nodeBlack] (node0) at (6.00, 5.75) {1};
					
					\node[nodeBlack] (node1) at (8.00, 5.00) {2};
					
					\node[nodeBlack] (node2) at (3.75, 4.00) {4};
		
					\node[nodeBlack] (node4) at (1.75, 3.00) {8};
					\node[nodeRed] (node5) at (5.75, 3.00) {9};
		
					\node[nodeBlack] (node8) at (0.75, 2.00) {16};
					\node[nodeBlackDashed] (node9) at (2.75, 2.00) {17};
					\node[nodeRed] (node10) at (4.75, 2.00) {18};
					\node[nodeRedDashed] (node11) at (6.75, 2.00) {19};
		
					\node[nodeBlack] (node16) at (0.25, 1.00) {32};
					\node[nodeBlackDashed] (node17) at (1.25, 1.00) {33};
					\node[nodeBlackDashed] (node18) at (2.25, 1.00) {34};
					\node[nodeBlackDashed] (node19) at (3.25, 1.00) {35};
					\node[nodeRed] (node20) at (4.25, 1.00) {36};
					\node[nodeRed] (node21) at (5.25, 1.00) {37};
					\node[nodeRedDashed] (node22) at (6.25, 1.00) {38};
					\node[nodeRedDashed] (node23) at (7.25, 1.00) {39};
		
					\node[nodeBlack] (node32) at (0.00, 0.00) {64};
					\node[nodeBlack] (node33) at (0.50, 0.00) {65};
					\node[nodeBlackDashed] (node34) at (1.00, 0.00) {66};
					\node[nodeBlackDashed] (node35) at (1.50, 0.00) {67};
					\node[nodeBlackDashed] (node36) at (2.00, 0.00) {68};
					\node[nodeBlackDashed] (node37) at (2.50, 0.00) {69};
					\node[nodeGreenDashed] (node38) at (3.00, 0.00) {70};
					\node[nodeRedDashed] (node39) at (3.50, 0.00) {71};
					\node[nodeRed] (node40) at (4.00, 0.00) {72};
					\node[nodeRed] (node41) at (4.50, 0.00) {73};
					\node[nodeRed] (node42) at (5.00, 0.00) {74};
					\node[nodeRed] (node43) at (5.50, 0.00) {75};
					\node[nodeRedDashed] (node44) at (6.00, 0.00) {76};
					\node[nodeRedDashed] (node45) at (6.50, 0.00) {77};
					\node[nodeRedDashed] (node46) at (7.00, 0.00) {78};
					\node[nodeRedDashed] (node47) at (7.50, 0.00) {79};
		
					\node[nodeBlueDashed] (node3) at (4.75, -2.50) {5};
		
					\node[nodeBlue] (node6) at (2.75, -3.50) {10};
					\node[nodeGreen] (node7) at (6.75, -3.50) {11};
		
					\node[nodeBlue] (node12) at (1.75, -4.50) {20};
					\node[nodeBlueDashed] (node13) at (3.75, -4.50) {21};
					\node[nodeGreen] (node14) at (5.75, -4.50) {22};
					\node[nodeGreenDashed] (node15) at (7.75, -4.50) {23};
		
					\node[nodeBlue] (node24) at (1.25, -5.50) {40};
					\node[nodeBlue] (node25) at (2.25, -5.50) {41};
					\node[nodeBlueDashed] (node26) at (3.25, -5.50) {42};
					\node[nodeBlueDashed] (node27) at (4.25, -5.50) {43};
					\node[nodeGreen] (node28) at (5.25, -5.50) {44};
					\node[nodeGreen] (node29) at (6.25, -5.50) {45};
					\node[nodeGreenDashed] (node30) at (7.25, -5.50) {46};
					\node[nodeGreenDashed] (node31) at (8.25, -5.50) {47};
		
					\node[nodeBlue] (node48) at (1.00, -6.50) {80};
					\node[nodeBlue] (node49) at (1.50, -6.50) {81};
					\node[nodeBlue] (node50) at (2.00, -6.50) {82};
					\node[nodeBlue] (node51) at (2.50, -6.50) {83};
					\node[nodeBlueDashed] (node52) at (3.00, -6.50) {84};
					\node[nodeBlueDashed] (node53) at (3.50, -6.50) {85};
					\node[nodeBlueDashed] (node54) at (4.00, -6.50) {86};
					\node[nodeBlueDashed] (node55) at (4.50, -6.50) {87};
					\node[nodeGreen] (node56) at (5.00, -6.50) {88};
					\node[nodeGreen] (node57) at (5.50, -6.50) {89};
					\node[nodeGreen] (node58) at (6.00, -6.50) {90};
					\node[nodeGreen] (node59) at (6.50, -6.50) {91};
					\node[nodeGreenDashed] (node60) at (7.00, -6.50) {92};
					\node[nodeGreenDashed] (node61) at (7.50, -6.50) {93};
					\node[nodeGreenDashed] (node62) at (8.00, -6.50) {94};
					\node[nodeGreenDashed] (node63) at (8.50, -6.50) {95};
		
					\node[nodeCyanDashed] (node1a) at (8.00, 5.00 - 13.20) {3};
					
					\node[nodeCyanDashed] (node2a) at (3.75, 4.00 - 13.20) {6};
		
					\node[nodeCyan] (node4a) at (1.75, 3.00 - 13.20) {12};
					\node[nodeMagenta] (node5a) at (5.75, 3.00 - 13.20) {13};
		
					\node[nodeCyan] (node8a) at (0.75, 2.00 - 13.20) {24};
					\node[nodeCyanDashed] (node9a) at (2.75, 2.00 - 13.20) {25};
					\node[nodeMagenta] (node10a) at (4.75, 2.00 - 13.20) {26};
					\node[nodeMagentaDashed] (node11a) at (6.75, 2.00 - 13.20) {27};
		
					\node[nodeCyan] (node16a) at (0.25, 1.00 - 13.20) {48};
					\node[nodeCyan] (node17a) at (1.25, 1.00 - 13.20) {49};
					\node[nodeCyanDashed] (node18a) at (2.25, 1.00 - 13.20) {50};
					\node[nodeCyanDashed] (node19a) at (3.25, 1.00 - 13.20) {51};
					\node[nodeMagenta] (node20a) at (4.25, 1.00 - 13.20) {52};
					\node[nodeMagenta] (node21a) at (5.25, 1.00 - 13.20) {53};
					\node[nodeMagentaDashed] (node22a) at (6.25, 1.00 - 13.20) {54};
					\node[nodeMagentaDashed] (node23a) at (7.25, 1.00 - 13.20) {55};
		
					\node[nodeCyan] (node32a) at (0.00, 0.00 - 13.20) {96};
					\node[nodeCyan] (node33a) at (0.50, 0.00 - 13.20) {97};
					\node[nodeCyan] (node34a) at (1.00, 0.00 - 13.20) {98};
					\node[nodeCyan] (node35a) at (1.50, 0.00 - 13.20) {99};
					\node[nodeCyanDashed] (node36a) at (2.00, 0.00 - 13.20) {100};
					\node[nodeCyanDashed] (node37a) at (2.50, 0.00 - 13.20) {101};
					\node[nodeCyanDashed] (node38a) at (3.00, 0.00 - 13.20) {102};
					\node[nodeMagentaDashed] (node39a) at (3.50, 0.00 - 13.20) {103};
					\node[nodeMagenta] (node40a) at (4.00, 0.00 - 13.20) {104};
					\node[nodeMagenta] (node41a) at (4.50, 0.00 - 13.20) {105};
					\node[nodeMagenta] (node42a) at (5.00, 0.00 - 13.20) {106};
					\node[nodeMagenta] (node43a) at (5.50, 0.00 - 13.20) {107};
					\node[nodeMagentaDashed] (node44a) at (6.00, 0.00 - 13.20) {108};
					\node[nodeMagentaDashed] (node45a) at (6.50, 0.00 - 13.20) {109};
					\node[nodeMagentaDashed] (node46a) at (7.00, 0.00 - 13.20) {110};
					\node[nodeMagentaDashed] (node47a) at (7.50, 0.00 - 13.20) {111};
		
					\node[nodeYellowDashed] (node3a) at (4.75, -2.50 - 13.20) {7};
		
					\node[nodeYellow] (node6a) at (2.75, -3.50 - 13.20) {14};
					\node[nodeGrey] (node7a) at (6.75, -3.50 - 13.20) {15};
		
					\node[nodeYellow] (node12a) at (1.75, -4.50 - 13.20) {28};
					\node[nodeYellowDashed] (node13a) at (3.75, -4.50 - 13.20) {29};
					\node[nodeGrey] (node14a) at (5.75, -4.50 - 13.20) {30};
					\node[nodeGreyDashed] (node15a) at (7.75, -4.50 - 13.20) {31};
		
					\node[nodeYellow] (node24a) at (1.25, -5.50 - 13.20) {56};
					\node[nodeYellow] (node25a) at (2.25, -5.50 - 13.20) {57};
					\node[nodeYellowDashed] (node26a) at (3.25, -5.50 - 13.20) {58};
					\node[nodeYellowDashed] (node27a) at (4.25, -5.50 - 13.20) {59};
					\node[nodeGrey] (node28a) at (5.25, -5.50 - 13.20) {60};
					\node[nodeGrey] (node29a) at (6.25, -5.50 - 13.20) {61};
					\node[nodeGreyDashed] (node30a) at (7.25, -5.50 - 13.20) {62};
					\node[nodeGreyDashed] (node31a) at (8.25, -5.50 - 13.20) {63};
		
					\node[nodeYellow] (node48a) at (1.00, -6.50 - 13.20) {112};
					\node[nodeYellow] (node49a) at (1.50, -6.50 - 13.20) {113};
					\node[nodeYellow] (node50a) at (2.00, -6.50 - 13.20) {114};
					\node[nodeYellow] (node51a) at (2.50, -6.50 - 13.20) {115};
					\node[nodeYellowDashed] (node52a) at (3.00, -6.50 - 13.20) {116};
					\node[nodeYellowDashed] (node53a) at (3.50, -6.50 - 13.20) {117};
					\node[nodeYellowDashed] (node54a) at (4.00, -6.50 - 13.20) {118};
					\node[nodeYellowDashed] (node55a) at (4.50, -6.50 - 13.20) {119};
					\node[nodeGrey] (node56a) at (5.00, -6.50 - 13.20) {120};
					\node[nodeGrey] (node57a) at (5.50, -6.50 - 13.20) {121};
					\node[nodeGrey] (node58a) at (6.00, -6.50 - 13.20) {122};
					\node[nodeGrey] (node59a) at (6.50, -6.50 - 13.20) {123};
					\node[nodeGreyDashed] (node60a) at (7.00, -6.50 - 13.20) {124};
					\node[nodeGreyDashed] (node61a) at (7.50, -6.50 - 13.20) {125};
					\node[nodeGreyDashed] (node62a) at (8.00, -6.50 - 13.20) {126};
					\node[nodeGreyDashed] (node63a) at (8.50, -6.50 - 13.20) {127};
					
					\draw (node0) to (node1);
		
					\draw (node1) to (node2);
					\draw (node1) .. controls (8.50, 0.10) and (9.50, -0.10) .. (node3);
		
					\draw (node2) to (node4);
					\draw (node2) to (node5);
					\draw (node3) to (node6);
					\draw (node3) to (node7);
		
					\draw (node4) to (node8);
					\draw (node4) to (node9);
					\draw (node5) to (node10);
					\draw (node5) to (node11);
					\draw (node6) to (node12);
					\draw (node6) to (node13);
					\draw (node7) to (node14);
					\draw (node7) to (node15);
		
					\draw (node8) to (node16);
					\draw (node8) to (node17);
					\draw (node9) to (node18);
					\draw (node9) to (node19);
					\draw (node10) to (node20);
					\draw (node10) to (node21);
					\draw (node11) to (node22);
					\draw (node11) to (node23);
					\draw (node12) to (node24);
					\draw (node12) to (node25);
					\draw (node13) to (node26);
					\draw (node13) to (node27);
					\draw (node14) to (node28);
					\draw (node14) to (node29);
					\draw (node15) to (node30);
					\draw (node15) to (node31);
		
					\draw (node16) to (node32);
					\draw (node16) to (node33);
					\draw (node17) to (node34);
					\draw (node17) to (node35);
					\draw (node18) to (node36);
					\draw (node18) to (node37);
					\draw (node19) to (node38);
					\draw (node19) to (node39);
					\draw (node20) to (node40);
					\draw (node20) to (node41);
					\draw (node21) to (node42);
					\draw (node21) to (node43);
					\draw (node22) to (node44);
					\draw (node22) to (node45);
					\draw (node23) to (node46);
					\draw (node23) to (node47);
					\draw (node24) to (node48);
					\draw (node24) to (node49);
					\draw (node25) to (node50);
					\draw (node25) to (node51);
					\draw (node26) to (node52);
					\draw (node26) to (node53);
					\draw (node27) to (node54);
					\draw (node27) to (node55);
					\draw (node28) to (node56);
					\draw (node28) to (node57);
					\draw (node29) to (node58);
					\draw (node29) to (node59);
					\draw (node30) to (node60);
					\draw (node30) to (node61);
					\draw (node31) to (node62);
					\draw (node31) to (node63);
		
					\draw (node0) to [out=-0,in=150] (8.00, 5.60) to [out=-30,in=100] (8.50, 4.50) to [out=-80,in=95] (9.00, 1.00) to [out=-85,in=85] (8.85, -6.50) to [out=-95,in=60] (node1a);				
					\draw (node1a) to (node2a);
					\draw (node1a) .. controls (8.50, 0.10 - 13.20) and (9.50, -0.10 - 13.20) .. (node3a);
		
					\draw (node2a) to (node4a);
					\draw (node2a) to (node5a);
					\draw (node3a) to (node6a);
					\draw (node3a) to (node7a);
		
					\draw (node4a) to (node8a);
					\draw (node4a) to (node9a);
					\draw (node5a) to (node10a);
					\draw (node5a) to (node11a);
					\draw (node6a) to (node12a);
					\draw (node6a) to (node13a);
					\draw (node7a) to (node14a);
					\draw (node7a) to (node15a);
		
					\draw (node8a) to (node16a);
					\draw (node8a) to (node17a);
					\draw (node9a) to (node18a);
					\draw (node9a) to (node19a);
					\draw (node10a) to (node20a);
					\draw (node10a) to (node21a);
					\draw (node11a) to (node22a);
					\draw (node11a) to (node23a);
					\draw (node12a) to (node24a);
					\draw (node12a) to (node25a);
					\draw (node13a) to (node26a);
					\draw (node13a) to (node27a);
					\draw (node14a) to (node28a);
					\draw (node14a) to (node29a);
					\draw (node15a) to (node30a);
					\draw (node15a) to (node31a);
		
					\draw (node16a) to (node32a);
					\draw (node16a) to (node33a);
					\draw (node17a) to (node34a);
					\draw (node17a) to (node35a);
					\draw (node18a) to (node36a);
					\draw (node18a) to (node37a);
					\draw (node19a) to (node38a);
					\draw (node19a) to (node39a);
					\draw (node20a) to (node40a);
					\draw (node20a) to (node41a);
					\draw (node21a) to (node42a);
					\draw (node21a) to (node43a);
					\draw (node22a) to (node44a);
					\draw (node22a) to (node45a);
					\draw (node23a) to (node46a);
					\draw (node23a) to (node47a);
					\draw (node24a) to (node48a);
					\draw (node24a) to (node49a);
					\draw (node25a) to (node50a);
					\draw (node25a) to (node51a);
					\draw (node26a) to (node52a);
					\draw (node26a) to (node53a);
					\draw (node27a) to (node54a);
					\draw (node27a) to (node55a);
					\draw (node28a) to (node56a);
					\draw (node28a) to (node57a);
					\draw (node29a) to (node58a);
					\draw (node29a) to (node59a);
					\draw (node30a) to (node60a);
					\draw (node30a) to (node61a);
					\draw (node31a) to (node62a);
					\draw (node31a) to (node63a);
				\end{tikzpictureGuestGraph}
			\end{comment:upperBound}
			\vspace*{-0.27cm}
			\caption[Guest graph $G = (V, E)$ (binary regular tree of height $h_G = 6$). The colours are related to   the arrangement $\phi$ depicted in Figures~\ref{figure:arrangementPhiForHG6FirstPart} and \ref{figure:arrangementPhiForHG6SecondPart}.]{Guest graph $G = (V, E)$ (binary regular tree of height $h_G = 6$). The colours are related to   the arrangement $\phi$ depicted  in Figures~\ref{figure:arrangementPhiForHG6FirstPart} and \ref{figure:arrangementPhiForHG6SecondPart}.}
			\label{figure:guestGraphGForHG6DepictingTheArrangementPhi}
		\end{figure}
		\begin{figure}[p]
			\centering
			\begin{comment:upperBound}
				\begin{tikzpictureHostGraph}
					\node[otherNode] (node00) at (3.75, 6.00) {};
				
					\node[otherNode] (node0) at (6.00, 5.75) {};
					
					\node[otherNode] (node1) at (8.00, 5.00) {};
					
					\node[otherNode] (node2) at (3.75, 4.00) {};
		
					\node[otherNode] (node4) at (1.75, 3.00) {};
					\node[otherNode] (node5) at (5.75, 3.00) {};
		
					\node[otherNode] (node8) at (0.75, 2.00) {};
					\node[otherNode] (node9) at (2.75, 2.00) {};
					\node[otherNode] (node10) at (4.75, 2.00) {};
					\node[otherNode] (node11) at (6.75, 2.00) {};
		
					\node[otherNode] (node16) at (0.25, 1.00) {};
					\node[otherNode] (node17) at (1.25, 1.00) {};
					\node[otherNode] (node18) at (2.25, 1.00) {};
					\node[otherNode] (node19) at (3.25, 1.00) {};
					\node[otherNode] (node20) at (4.25, 1.00) {};
					\node[otherNode] (node21) at (5.25, 1.00) {};
					\node[otherNode] (node22) at (6.25, 1.00) {};
					\node[otherNode] (node23) at (7.25, 1.00) {};
		
					\node[leafUsedBlack] (node32) at (0.00, 0.00) {1};
					\node[leafUsedBlack] (node33) at (0.50, 0.00) {2};
					\node[leafUsedBlack] (node34) at (1.00, 0.00) {4};
					\node[leafUsedBlack] (node35) at (1.50, 0.00) {8};
					\node[leafUsedBlack] (node36) at (2.00, 0.00) {16};
					\node[leafUsedBlack] (node37) at (2.50, 0.00) {64};
					\node[leafUsedBlack] (node38) at (3.00, 0.00) {32};
					\node[leafUsedBlack] (node39) at (3.50, 0.00) {65};
					\node[leafUsedBlackDashed] (node40) at (4.00, 0.00) {66};
					\node[leafUsedBlackDashed] (node41) at (4.50, 0.00) {33};
					\node[leafUsedBlackDashed] (node42) at (5.00, 0.00) {67};
					\node[leafUsedBlackDashed] (node43) at (5.50, 0.00) {68};
					\node[leafUsedBlackDashed] (node44) at (6.00, 0.00) {34};
					\node[leafUsedBlackDashed] (node45) at (6.50, 0.00) {69};
					\node[leafUsedBlackDashed] (node46) at (7.00, 0.00) {17};
					\node[leafUsedBlackDashed] (node47) at (7.50, 0.00) {35};
		
					\node[otherNode] (node3) at (4.75, -2.50) {};
		
					\node[otherNode] (node6) at (2.75, -3.50) {};
					\node[otherNode] (node7) at (6.75, -3.50) {};
		
					\node[otherNode] (node12) at (1.75, -4.50) {};
					\node[otherNode] (node13) at (3.75, -4.50) {};
					\node[otherNode] (node14) at (5.75, -4.50) {};
					\node[otherNode] (node15) at (7.75, -4.50) {};
		
					\node[otherNode] (node24) at (1.25, -5.50) {};
					\node[otherNode] (node25) at (2.25, -5.50) {};
					\node[otherNode] (node26) at (3.25, -5.50) {};
					\node[otherNode] (node27) at (4.25, -5.50) {};
					\node[otherNode] (node28) at (5.25, -5.50) {};
					\node[otherNode] (node29) at (6.25, -5.50) {};
					\node[otherNode] (node30) at (7.25, -5.50) {};
					\node[otherNode] (node31) at (8.25, -5.50) {};
		
					\node[leafUsedRed] (node48) at (1.00, -6.50) {9};
					\node[leafUsedRed] (node49) at (1.50, -6.50) {18};
					\node[leafUsedRed] (node50) at (2.00, -6.50) {36};
					\node[leafUsedRed] (node51) at (2.50, -6.50) {72};
					\node[leafUsedRed] (node52) at (3.00, -6.50) {73};
					\node[leafUsedRed] (node53) at (3.50, -6.50) {74};
					\node[leafUsedRed] (node54) at (4.00, -6.50) {37};
					\node[leafUsedRed] (node55) at (4.50, -6.50) {75};
					\node[leafUsedRedDashed] (node56) at (5.00, -6.50) {76};
					\node[leafUsedRedDashed] (node57) at (5.50, -6.50) {38};
					\node[leafUsedRedDashed] (node58) at (6.00, -6.50) {77};
					\node[leafUsedRedDashed] (node59) at (6.50, -6.50) {19};
					\node[leafUsedRedDashed] (node60) at (7.00, -6.50) {78};
					\node[leafUsedRedDashed] (node61) at (7.50, -6.50) {39};
					\node[leafUsedRedDashed] (node62) at (8.00, -6.50) {79};
					\node[leafUsedRedDashed] (node63) at (8.50, -6.50) {71};
		
					\node[otherNode] (node1a) at (8.00, 5.00 - 13.20) {};
					
					\node[otherNode] (node2a) at (3.75, 4.00 - 13.20) {};
		
					\node[otherNode] (node4a) at (1.75, 3.00 - 13.20) {};
					\node[otherNode] (node5a) at (5.75, 3.00 - 13.20) {};
		
					\node[otherNode] (node8a) at (0.75, 2.00 - 13.20) {};
					\node[otherNode] (node9a) at (2.75, 2.00 - 13.20) {};
					\node[otherNode] (node10a) at (4.75, 2.00 - 13.20) {};
					\node[otherNode] (node11a) at (6.75, 2.00 - 13.20) {};
		
					\node[otherNode] (node16a) at (0.25, 1.00 - 13.20) {};
					\node[otherNode] (node17a) at (1.25, 1.00 - 13.20) {};
					\node[otherNode] (node18a) at (2.25, 1.00 - 13.20) {};
					\node[otherNode] (node19a) at (3.25, 1.00 - 13.20) {};
					\node[otherNode] (node20a) at (4.25, 1.00 - 13.20) {};
					\node[otherNode] (node21a) at (5.25, 1.00 - 13.20) {};
					\node[otherNode] (node22a) at (6.25, 1.00 - 13.20) {};
					\node[otherNode] (node23a) at (7.25, 1.00 - 13.20) {};
		
					\node[leafUsedBlue] (node32a) at (0.00, 0.00 - 13.20) {10};
					\node[leafUsedBlue] (node33a) at (0.50, 0.00 - 13.20) {20};
					\node[leafUsedBlue] (node34a) at (1.00, 0.00 - 13.20) {40};
					\node[leafUsedBlue] (node35a) at (1.50, 0.00 - 13.20) {80};
					\node[leafUsedBlue] (node36a) at (2.00, 0.00 - 13.20) {81};
					\node[leafUsedBlue] (node37a) at (2.50, 0.00 - 13.20) {82};
					\node[leafUsedBlue] (node38a) at (3.00, 0.00 - 13.20) {41};
					\node[leafUsedBlue] (node39a) at (3.50, 0.00 - 13.20) {83};
					\node[leafUsedBlueDashed] (node40a) at (4.00, 0.00 - 13.20) {84};
					\node[leafUsedBlueDashed] (node41a) at (4.50, 0.00 - 13.20) {42};
					\node[leafUsedBlueDashed] (node42a) at (5.00, 0.00 - 13.20) {85};
					\node[leafUsedBlueDashed] (node43a) at (5.50, 0.00 - 13.20) {21};
					\node[leafUsedBlueDashed] (node44a) at (6.00, 0.00 - 13.20) {86};
					\node[leafUsedBlueDashed] (node45a) at (6.50, 0.00 - 13.20) {43};
					\node[leafUsedBlueDashed] (node46a) at (7.00, 0.00 - 13.20) {87};
					\node[leafUsedBlueDashed] (node47a) at (7.50, 0.00 - 13.20) {5};
		
					\node[otherNode] (node3a) at (4.75, -2.50 - 13.20) {};
		
					\node[otherNode] (node6a) at (2.75, -3.50 - 13.20) {};
					\node[otherNode] (node7a) at (6.75, -3.50 - 13.20) {};
		
					\node[otherNode] (node12a) at (1.75, -4.50 - 13.20) {};
					\node[otherNode] (node13a) at (3.75, -4.50 - 13.20) {};
					\node[otherNode] (node14a) at (5.75, -4.50 - 13.20) {};
					\node[otherNode] (node15a) at (7.75, -4.50 - 13.20) {};
		
					\node[otherNode] (node24a) at (1.25, -5.50 - 13.20) {};
					\node[otherNode] (node25a) at (2.25, -5.50 - 13.20) {};
					\node[otherNode] (node26a) at (3.25, -5.50 - 13.20) {};
					\node[otherNode] (node27a) at (4.25, -5.50 - 13.20) {};
					\node[otherNode] (node28a) at (5.25, -5.50 - 13.20) {};
					\node[otherNode] (node29a) at (6.25, -5.50 - 13.20) {};
					\node[otherNode] (node30a) at (7.25, -5.50 - 13.20) {};
					\node[otherNode] (node31a) at (8.25, -5.50 - 13.20) {};
		
					\node[leafUsedGreen] (node48a) at (1.00, -6.50 - 13.20) {11};
					\node[leafUsedGreen] (node49a) at (1.50, -6.50 - 13.20) {22};
					\node[leafUsedGreen] (node50a) at (2.00, -6.50 - 13.20) {44};
					\node[leafUsedGreen] (node51a) at (2.50, -6.50 - 13.20) {88};
					\node[leafUsedGreen] (node52a) at (3.00, -6.50 - 13.20) {89};
					\node[leafUsedGreen] (node53a) at (3.50, -6.50 - 13.20) {90};
					\node[leafUsedGreen] (node54a) at (4.00, -6.50 - 13.20) {45};
					\node[leafUsedGreen] (node55a) at (4.50, -6.50 - 13.20) {91};
					\node[leafUsedGreenDashed] (node56a) at (5.00, -6.50 - 13.20) {92};
					\node[leafUsedGreenDashed] (node57a) at (5.50, -6.50 - 13.20) {46};
					\node[leafUsedGreenDashed] (node58a) at (6.00, -6.50 - 13.20) {93};
					\node[leafUsedGreenDashed] (node59a) at (6.50, -6.50 - 13.20) {23};
					\node[leafUsedGreenDashed] (node60a) at (7.00, -6.50 - 13.20) {94};
					\node[leafUsedGreenDashed] (node61a) at (7.50, -6.50 - 13.20) {47};
					\node[leafUsedGreenDashed] (node62a) at (8.00, -6.50 - 13.20) {95};
					\node[leafUsedGreenDashed] (node63a) at (8.50, -6.50 - 13.20) {70};
					
					\draw [-, black!50] (node00) to [out=30,in=180] (4.875, 6.50) to [out=0,in=180] (7.00, 6.50);
					\draw [dashed, black!50] (7.00, 6.50) to (9.52, 6.50);
					\draw [-, black!50] (node00) to (node0);
					
					\draw [-, black!50] (node0) to (node1);
		
					\draw [-, black!50] (node1) to (node2);
					\draw [-, black!50] (node1) .. controls (8.50, 0.10) and (9.50, -0.10) .. (node3);
		
					\draw [-, black!50] (node2) to (node4);
					\draw [-, black!50] (node2) to (node5);
					\draw [-, black!50] (node3) to (node6);
					\draw [-, black!50] (node3) to (node7);
		
					\draw [-, black!50] (node4) to (node8);
					\draw [-, black!50] (node4) to (node9);
					\draw [-, black!50] (node5) to (node10);
					\draw [-, black!50] (node5) to (node11);
					\draw [-, black!50] (node6) to (node12);
					\draw [-, black!50] (node6) to (node13);
					\draw [-, black!50] (node7) to (node14);
					\draw [-, black!50] (node7) to (node15);
		
					\draw [-, black!50] (node8) to (node16);
					\draw [-, black!50] (node8) to (node17);
					\draw [-, black!50] (node9) to (node18);
					\draw [-, black!50] (node9) to (node19);
					\draw [-, black!50] (node10) to (node20);
					\draw [-, black!50] (node10) to (node21);
					\draw [-, black!50] (node11) to (node22);
					\draw [-, black!50] (node11) to (node23);
					\draw [-, black!50] (node12) to (node24);
					\draw [-, black!50] (node12) to (node25);
					\draw [-, black!50] (node13) to (node26);
					\draw [-, black!50] (node13) to (node27);
					\draw [-, black!50] (node14) to (node28);
					\draw [-, black!50] (node14) to (node29);
					\draw [-, black!50] (node15) to (node30);
					\draw [-, black!50] (node15) to (node31);
		
					\draw [-, black!50] (node16) to (node32);
					\draw [-, black!50] (node16) to (node33);
					\draw [-, black!50] (node17) to (node34);
					\draw [-, black!50] (node17) to (node35);
					\draw [-, black!50] (node18) to (node36);
					\draw [-, black!50] (node18) to (node37);
					\draw [-, black!50] (node19) to (node38);
					\draw [-, black!50] (node19) to (node39);
					\draw [-, black!50] (node20) to (node40);
					\draw [-, black!50] (node20) to (node41);
					\draw [-, black!50] (node21) to (node42);
					\draw [-, black!50] (node21) to (node43);
					\draw [-, black!50] (node22) to (node44);
					\draw [-, black!50] (node22) to (node45);
					\draw [-, black!50] (node23) to (node46);
					\draw [-, black!50] (node23) to (node47);
					\draw [-, black!50] (node24) to (node48);
					\draw [-, black!50] (node24) to (node49);
					\draw [-, black!50] (node25) to (node50);
					\draw [-, black!50] (node25) to (node51);
					\draw [-, black!50] (node26) to (node52);
					\draw [-, black!50] (node26) to (node53);
					\draw [-, black!50] (node27) to (node54);
					\draw [-, black!50] (node27) to (node55);
					\draw [-, black!50] (node28) to (node56);
					\draw [-, black!50] (node28) to (node57);
					\draw [-, black!50] (node29) to (node58);
					\draw [-, black!50] (node29) to (node59);
					\draw [-, black!50] (node30) to (node60);
					\draw [-, black!50] (node30) to (node61);
					\draw [-, black!50] (node31) to (node62);
					\draw [-, black!50] (node31) to (node63);
		
					\draw [-, black!50] (node0) to [out=-0,in=150] (8.00, 5.60) to [out=-30,in=100] (8.50, 4.50) to [out=-80,in=95] (9.00, 1.00) to [out=-85,in=85] (8.85, -6.50) to [out=-95,in=60] (node1a);				
					\draw [-, black!50] (node1a) to (node2a);
					\draw [-, black!50] (node1a) .. controls (8.50, 0.10 - 13.20) and (9.50, -0.10 - 13.20) .. (node3a);
		
					\draw [-, black!50] (node2a) to (node4a);
					\draw [-, black!50] (node2a) to (node5a);
					\draw [-, black!50] (node3a) to (node6a);
					\draw [-, black!50] (node3a) to (node7a);
		
					\draw [-, black!50] (node4a) to (node8a);
					\draw [-, black!50] (node4a) to (node9a);
					\draw [-, black!50] (node5a) to (node10a);
					\draw [-, black!50] (node5a) to (node11a);
					\draw [-, black!50] (node6a) to (node12a);
					\draw [-, black!50] (node6a) to (node13a);
					\draw [-, black!50] (node7a) to (node14a);
					\draw [-, black!50] (node7a) to (node15a);
		
					\draw [-, black!50] (node8a) to (node16a);
					\draw [-, black!50] (node8a) to (node17a);
					\draw [-, black!50] (node9a) to (node18a);
					\draw [-, black!50] (node9a) to (node19a);
					\draw [-, black!50] (node10a) to (node20a);
					\draw [-, black!50] (node10a) to (node21a);
					\draw [-, black!50] (node11a) to (node22a);
					\draw [-, black!50] (node11a) to (node23a);
					\draw [-, black!50] (node12a) to (node24a);
					\draw [-, black!50] (node12a) to (node25a);
					\draw [-, black!50] (node13a) to (node26a);
					\draw [-, black!50] (node13a) to (node27a);
					\draw [-, black!50] (node14a) to (node28a);
					\draw [-, black!50] (node14a) to (node29a);
					\draw [-, black!50] (node15a) to (node30a);
					\draw [-, black!50] (node15a) to (node31a);
		
					\draw [-, black!50] (node16a) to (node32a);
					\draw [-, black!50] (node16a) to (node33a);
					\draw [-, black!50] (node17a) to (node34a);
					\draw [-, black!50] (node17a) to (node35a);
					\draw [-, black!50] (node18a) to (node36a);
					\draw [-, black!50] (node18a) to (node37a);
					\draw [-, black!50] (node19a) to (node38a);
					\draw [-, black!50] (node19a) to (node39a);
					\draw [-, black!50] (node20a) to (node40a);
					\draw [-, black!50] (node20a) to (node41a);
					\draw [-, black!50] (node21a) to (node42a);
					\draw [-, black!50] (node21a) to (node43a);
					\draw [-, black!50] (node22a) to (node44a);
					\draw [-, black!50] (node22a) to (node45a);
					\draw [-, black!50] (node23a) to (node46a);
					\draw [-, black!50] (node23a) to (node47a);
					\draw [-, black!50] (node24a) to (node48a);
					\draw [-, black!50] (node24a) to (node49a);
					\draw [-, black!50] (node25a) to (node50a);
					\draw [-, black!50] (node25a) to (node51a);
					\draw [-, black!50] (node26a) to (node52a);
					\draw [-, black!50] (node26a) to (node53a);
					\draw [-, black!50] (node27a) to (node54a);
					\draw [-, black!50] (node27a) to (node55a);
					\draw [-, black!50] (node28a) to (node56a);
					\draw [-, black!50] (node28a) to (node57a);
					\draw [-, black!50] (node29a) to (node58a);
					\draw [-, black!50] (node29a) to (node59a);
					\draw [-, black!50] (node30a) to (node60a);
					\draw [-, black!50] (node30a) to (node61a);
					\draw [-, black!50] (node31a) to (node62a);
					\draw [-, black!50] (node31a) to (node63a);

					\draw [-] (node32) to [out=-90,in=-90] (node33);  
					\draw [-] (node32) to [out=-90,in=90] (0.00, -0.75) to [out=-90,in=180] (0.50, -2.00) to [out=0,in=180] (7.00, -2.00);  
					\draw [dashed] (7.00, -2.00) -- (9.52, -2.00);  
					
					\draw [-] (node33) to [out=-90,in=-90] (node34);  
					\draw [-] (node33) to [out=-90,in=90] (0.50, -7.00) to [out=-90,in=180] (1.00, -8.00) to [out=0,in=180] (7.00, -8.00) to [out=0,in=90] (7.50, -9.00) to [out=-90,in=90] (node47a);  

					\draw [-] (node34) to [out=-90,in=-90] (node35);  
					\draw [-] (node34) to [out=-90,in=90] (node48);  
					\draw [-] (node47a) to [out=-90,in=0] ([shift={(-0.50, -1.50)}]node47a) to [out=180,in=0]([shift={(+0.50, -1.50)}]node32a) to [out=180,in=-90] (node32a);  
					\draw [-] (node47a) to [out=-90,in=0] (7.00,-14.95) to [out=180,in=0] (1.50, -14.95) to [out=180,in=90] (1.00, -15.95) to [out=-90,in=90] (node48a);  

					\draw [-] (node35) to [out=-90,in=-90] (node36);  
					\draw [-] (node35) to [out=-90,in=180] ([shift={(+0.50, -1.25)}]node35) to [out=0,in=180] ([shift={(-0.50, -1.25)}]node46) to [out=0,in=-90] (node46);  
					\draw [-] (node48) to [out=-90,in=-90] (node49);  
					\draw [-] (node48) to [out=-90,in=180] ([shift={(+0.50, -1.25)}]node48) to [out=0,in=180] ([shift={(-0.50, -1.25)}]node59) to [out=0,in=-90] (node59);  
					\draw [-] (node32a) to [out=-90,in=-90] (node33a);  
					\draw [-] (node32a) to [out=-90,in=180] ([shift={(+0.50, -1.25)}]node32a) to [out=0,in=180] ([shift={(-0.50, -1.25)}]node43a) to [out=0,in=-90] (node43a);  
					\draw [-] (node48a) to [out=-90,in=-90] (node49a);  
					\draw [-] (node48a) to [out=-90,in=180] ([shift={(+0.50, -1.25)}]node48a) to [out=0,in=180] ([shift={(-0.50, -1.25)}]node59a) to [out=0,in=-90] (node59a);  
					
					\draw [-] (node36) to [out=-90,in=-90] (node38);  
					\draw [-] (node36) to [out=-90,in=180] ([shift={(+0.50, -1.00)}]node36) to [out=0,in=180] ([shift={(-0.50, -1.00)}]node41) to [out=0,in=-90] (node41);  
					\draw [-] (node46) to [out=-90,in=-90] (node44);  
					\draw [-] (node46) to [out=-90,in=-90] (node47);  
					\draw [-] (node49) to [out=-90,in=-90] (node50);  
					\draw [-] (node49) to [out=-90,in=180] ([shift={(+0.50, -1.00)}]node49) to [out=0,in=180] ([shift={(-0.50, -1.00)}]node54) to [out=0,in=-90] (node54);  
					\draw [-] (node59) to [out=-90,in=-90] (node57);  
					\draw [-] (node59) to [out=-90,in=-90] (node61);  
					\draw [-] (node33a) to [out=-90,in=-90] (node34a);  
					\draw [-] (node33a) to [out=-90,in=180] ([shift={(+0.50, -1.00)}]node33a) to [out=0,in=180] ([shift={(-0.50, -1.00)}]node38a) to [out=0,in=-90] (node38a);  
					\draw [-] (node43a) to [out=-90,in=-90] (node41a);  
					\draw [-] (node43a) to [out=-90,in=-90] (node45a);  
					\draw [-] (node49a) to [out=-90,in=-90] (node50a);  
					\draw [-] (node49a) to [out=-90,in=180] ([shift={(+0.50, -1.00)}]node49a) to [out=0,in=180] ([shift={(-0.50, -1.00)}]node54a) to [out=0,in=-90] (node54a);  
					\draw [-] (node59a) to [out=-90,in=-90] (node57a);  
					\draw [-] (node59a) to [out=-90,in=-90] (node61a);  

					\draw [-] (node38) to [out=-90,in=-90] (node37);  
					\draw [-] (node38) to [out=-90,in=-90] (node39);  
					\draw [-] (node41) to [out=-90,in=-90] (node40);  
					\draw [-] (node41) to [out=-90,in=-90] (node42);  
					\draw [-] (node44) to [out=-90,in=-90] (node43);  
					\draw [-] (node44) to [out=-90,in=-90] (node45);  
					\draw [-] (node47) to [out=-90,in=90] (8.875, -6.50) to [out=-90,in=90] (node63a);  
					\draw [-] (node47) to [out=-90,in=90] (node63);  
					\draw [-] (node50) to [out=-90,in=-90] (node51);  
					\draw [-] (node50) to [out=-90,in=-90] (node52);  
					\draw [-] (node54) to [out=-90,in=-90] (node53);  
					\draw [-] (node54) to [out=-90,in=-90] (node55);  
					\draw [-] (node57) to [out=-90,in=-90] (node56);  
					\draw [-] (node57) to [out=-90,in=-90] (node58);  
					\draw [-] (node61) to [out=-90,in=-90] (node60);  
					\draw [-] (node61) to [out=-90,in=-90] (node62);  
					\draw [-] (node34a) to [out=-90,in=-90] (node35a);  
					\draw [-] (node34a) to [out=-90,in=-90] (node36a);  
					\draw [-] (node38a) to [out=-90,in=-90] (node37a);  
					\draw [-] (node38a) to [out=-90,in=-90] (node39a);  
					\draw [-] (node41a) to [out=-90,in=-90] (node40a);  
					\draw [-] (node41a) to [out=-90,in=-90] (node42a);  
					\draw [-] (node45a) to [out=-90,in=-90] (node44a);  
					\draw [-] (node45a) to [out=-90,in=-90] (node46a);  
					\draw [-] (node50a) to [out=-90,in=-90] (node51a);  
					\draw [-] (node50a) to [out=-90,in=-90] (node52a);  
					\draw [-] (node54a) to [out=-90,in=-90] (node53a);  
					\draw [-] (node54a) to [out=-90,in=-90] (node55a);  
					\draw [-] (node57a) to [out=-90,in=-90] (node56a);  
					\draw [-] (node57a) to [out=-90,in=-90] (node58a);  
					\draw [-] (node61a) to [out=-90,in=-90] (node60a);  
					\draw [-] (node61a) to [out=-90,in=-90] (node62a);  
				\end{tikzpictureHostGraph}
			\end{comment:upperBound}
			\vspace*{-0.27cm}
			\caption[Arrangement $\phi$ for the guest graph $G = (V, E)$ depicted in Figure~\ref{figure:guestGraphGForHG6DepictingTheArrangementPhi} (binary regular tree of height $h_G = 6$) -- first part.]{Arrangement $\phi$ for the guest graph $G = (V, E)$ depicted in Figure~\ref{figure:guestGraphGForHG6DepictingTheArrangementPhi} -- first part. Its objective function value is $OV(G, 2, \phi_A) = 584$.}
			\label{figure:arrangementPhiForHG6FirstPart}
		\end{figure}
		\begin{figure}[p]
			\centering
			\begin{comment:upperBound}
				\begin{tikzpictureHostGraph}
					\node[otherNode] (node0) at (6.00, 5.75) {};
					
					\node[otherNode] (node1) at (8.00, 5.00) {};
					
					\node[otherNode] (node2) at (3.75, 4.00) {};
		
					\node[otherNode] (node4) at (1.75, 3.00) {};
					\node[otherNode] (node5) at (5.75, 3.00) {};
		
					\node[otherNode] (node8) at (0.75, 2.00) {};
					\node[otherNode] (node9) at (2.75, 2.00) {};
					\node[otherNode] (node10) at (4.75, 2.00) {};
					\node[otherNode] (node11) at (6.75, 2.00) {};
		
					\node[otherNode] (node16) at (0.25, 1.00) {};
					\node[otherNode] (node17) at (1.25, 1.00) {};
					\node[otherNode] (node18) at (2.25, 1.00) {};
					\node[otherNode] (node19) at (3.25, 1.00) {};
					\node[otherNode] (node20) at (4.25, 1.00) {};
					\node[otherNode] (node21) at (5.25, 1.00) {};
					\node[otherNode] (node22) at (6.25, 1.00) {};
					\node[otherNode] (node23) at (7.25, 1.00) {};
		
					\node[leafUsedCyan] (node32) at (0.00, 0.00) {12};
					\node[leafUsedCyan] (node33) at (0.50, 0.00) {24};
					\node[leafUsedCyan] (node34) at (1.00, 0.00) {48};
					\node[leafUsedCyan] (node35) at (1.50, 0.00) {96};
					\node[leafUsedCyan] (node36) at (2.00, 0.00) {97};
					\node[leafUsedCyan] (node37) at (2.50, 0.00) {98};
					\node[leafUsedCyan] (node38) at (3.00, 0.00) {49};
					\node[leafUsedCyan] (node39) at (3.50, 0.00) {99};
					\node[leafUsedCyanDashed] (node40) at (4.00, 0.00) {100};
					\node[leafUsedCyanDashed] (node41) at (4.50, 0.00) {50};
					\node[leafUsedCyanDashed] (node42) at (5.00, 0.00) {101};
					\node[leafUsedCyanDashed] (node43) at (5.50, 0.00) {25};
					\node[leafUsedCyanDashed] (node44) at (6.00, 0.00) {6};
					\node[leafUsedCyanDashed] (node45) at (6.50, 0.00) {3};
					\node[leafUsedCyanDashed] (node46) at (7.00, 0.00) {102};
					\node[leafUsedCyanDashed] (node47) at (7.50, 0.00) {51};
		
					\node[otherNode] (node3) at (4.75, -2.50) {};
		
					\node[otherNode] (node6) at (2.75, -3.50) {};
					\node[otherNode] (node7) at (6.75, -3.50) {};
		
					\node[otherNode] (node12) at (1.75, -4.50) {};
					\node[otherNode] (node13) at (3.75, -4.50) {};
					\node[otherNode] (node14) at (5.75, -4.50) {};
					\node[otherNode] (node15) at (7.75, -4.50) {};
		
					\node[otherNode] (node24) at (1.25, -5.50) {};
					\node[otherNode] (node25) at (2.25, -5.50) {};
					\node[otherNode] (node26) at (3.25, -5.50) {};
					\node[otherNode] (node27) at (4.25, -5.50) {};
					\node[otherNode] (node28) at (5.25, -5.50) {};
					\node[otherNode] (node29) at (6.25, -5.50) {};
					\node[otherNode] (node30) at (7.25, -5.50) {};
					\node[otherNode] (node31) at (8.25, -5.50) {};
		
					\node[leafUsedMagenta] (node48) at (1.00, -6.50) {13};
					\node[leafUsedMagenta] (node49) at (1.50, -6.50) {26};
					\node[leafUsedMagenta] (node50) at (2.00, -6.50) {52};
					\node[leafUsedMagenta] (node51) at (2.50, -6.50) {104};
					\node[leafUsedMagenta] (node52) at (3.00, -6.50) {105};
					\node[leafUsedMagenta] (node53) at (3.50, -6.50) {106};
					\node[leafUsedMagenta] (node54) at (4.00, -6.50) {53};
					\node[leafUsedMagenta] (node55) at (4.50, -6.50) {107};
					\node[leafUsedMagentaDashed] (node56) at (5.00, -6.50) {108};
					\node[leafUsedMagentaDashed] (node57) at (5.50, -6.50) {54};
					\node[leafUsedMagentaDashed] (node58) at (6.00, -6.50) {109};
					\node[leafUsedMagentaDashed] (node59) at (6.50, -6.50) {27};
					\node[leafUsedMagentaDashed] (node60) at (7.00, -6.50) {110};
					\node[leafUsedMagentaDashed] (node61) at (7.50, -6.50) {55};
					\node[leafUsedMagentaDashed] (node62) at (8.00, -6.50) {111};
					\node[leafUsedMagentaDashed] (node63) at (8.50, -6.50) {103};
		
					\node[otherNode] (node1a) at (8.00, 5.00 - 13.20) {};
					
					\node[otherNode] (node2a) at (3.75, 4.00 - 13.20) {};
		
					\node[otherNode] (node4a) at (1.75, 3.00 - 13.20) {};
					\node[otherNode] (node5a) at (5.75, 3.00 - 13.20) {};
		
					\node[otherNode] (node8a) at (0.75, 2.00 - 13.20) {};
					\node[otherNode] (node9a) at (2.75, 2.00 - 13.20) {};
					\node[otherNode] (node10a) at (4.75, 2.00 - 13.20) {};
					\node[otherNode] (node11a) at (6.75, 2.00 - 13.20) {};
		
					\node[otherNode] (node16a) at (0.25, 1.00 - 13.20) {};
					\node[otherNode] (node17a) at (1.25, 1.00 - 13.20) {};
					\node[otherNode] (node18a) at (2.25, 1.00 - 13.20) {};
					\node[otherNode] (node19a) at (3.25, 1.00 - 13.20) {};
					\node[otherNode] (node20a) at (4.25, 1.00 - 13.20) {};
					\node[otherNode] (node21a) at (5.25, 1.00 - 13.20) {};
					\node[otherNode] (node22a) at (6.25, 1.00 - 13.20) {};
					\node[otherNode] (node23a) at (7.25, 1.00 - 13.20) {};
		
					\node[leafUsedYellow] (node32a) at (0.00, 0.00 - 13.20) {14};
					\node[leafUsedYellow] (node33a) at (0.50, 0.00 - 13.20) {28};
					\node[leafUsedYellow] (node34a) at (1.00, 0.00 - 13.20) {56};
					\node[leafUsedYellow] (node35a) at (1.50, 0.00 - 13.20) {112};
					\node[leafUsedYellow] (node36a) at (2.00, 0.00 - 13.20) {113};
					\node[leafUsedYellow] (node37a) at (2.50, 0.00 - 13.20) {114};
					\node[leafUsedYellow] (node38a) at (3.00, 0.00 - 13.20) {57};
					\node[leafUsedYellow] (node39a) at (3.50, 0.00 - 13.20) {115};
					\node[leafUsedYellowDashed] (node40a) at (4.00, 0.00 - 13.20) {116};
					\node[leafUsedYellowDashed] (node41a) at (4.50, 0.00 - 13.20) {58};
					\node[leafUsedYellowDashed] (node42a) at (5.00, 0.00 - 13.20) {117};
					\node[leafUsedYellowDashed] (node43a) at (5.50, 0.00 - 13.20) {29};
					\node[leafUsedYellowDashed] (node44a) at (6.00, 0.00 - 13.20) {118};
					\node[leafUsedYellowDashed] (node45a) at (6.50, 0.00 - 13.20) {59};
					\node[leafUsedYellowDashed] (node46a) at (7.00, 0.00 - 13.20) {119};
					\node[leafUsedYellowDashed] (node47a) at (7.50, 0.00 - 13.20) {7};
		
					\node[otherNode] (node3a) at (4.75, -2.50 - 13.20) {};
		
					\node[otherNode] (node6a) at (2.75, -3.50 - 13.20) {};
					\node[otherNode] (node7a) at (6.75, -3.50 - 13.20) {};
		
					\node[otherNode] (node12a) at (1.75, -4.50 - 13.20) {};
					\node[otherNode] (node13a) at (3.75, -4.50 - 13.20) {};
					\node[otherNode] (node14a) at (5.75, -4.50 - 13.20) {};
					\node[otherNode] (node15a) at (7.75, -4.50 - 13.20) {};
		
					\node[otherNode] (node24a) at (1.25, -5.50 - 13.20) {};
					\node[otherNode] (node25a) at (2.25, -5.50 - 13.20) {};
					\node[otherNode] (node26a) at (3.25, -5.50 - 13.20) {};
					\node[otherNode] (node27a) at (4.25, -5.50 - 13.20) {};
					\node[otherNode] (node28a) at (5.25, -5.50 - 13.20) {};
					\node[otherNode] (node29a) at (6.25, -5.50 - 13.20) {};
					\node[otherNode] (node30a) at (7.25, -5.50 - 13.20) {};
					\node[otherNode] (node31a) at (8.25, -5.50 - 13.20) {};
		
					\node[leafUsedGrey] (node48a) at (1.00, -6.50 - 13.20) {15};
					\node[leafUsedGrey] (node49a) at (1.50, -6.50 - 13.20) {30};
					\node[leafUsedGrey] (node50a) at (2.00, -6.50 - 13.20) {60};
					\node[leafUsedGrey] (node51a) at (2.50, -6.50 - 13.20) {120};
					\node[leafUsedGrey] (node52a) at (3.00, -6.50 - 13.20) {121};
					\node[leafUsedGrey] (node53a) at (3.50, -6.50 - 13.20) {122};
					\node[leafUsedGrey] (node54a) at (4.00, -6.50 - 13.20) {61};
					\node[leafUsedGrey] (node55a) at (4.50, -6.50 - 13.20) {123};
					\node[leafUsedGreyDashed] (node56a) at (5.00, -6.50 - 13.20) {124};
					\node[leafUsedGreyDashed] (node57a) at (5.50, -6.50 - 13.20) {62};
					\node[leafUsedGreyDashed] (node58a) at (6.00, -6.50 - 13.20) {125};
					\node[leafUsedGreyDashed] (node59a) at (6.50, -6.50 - 13.20) {31};
					\node[leafUsedGreyDashed] (node60a) at (7.00, -6.50 - 13.20) {126};
					\node[leafUsedGreyDashed] (node61a) at (7.50, -6.50 - 13.20) {63};
					\node[leafUsedGreyDashed] (node62a) at (8.00, -6.50 - 13.20) {127};
					\node[leafUnused] (node63a) at (8.50, -6.50 - 13.20) {};
					
					\draw [-, black!50] (node0) to [out=150,in=0] (4.875, 6.50) to [out=180,in=0] (2.23, 6.50);
					\draw [dashed, black!50] (2.23, 6.50) to (-0.29, 6.50);
					
					\draw [-, black!50] (node0) to (node1);
		
					\draw [-, black!50] (node1) to (node2);
					\draw [-, black!50] (node1) .. controls (8.50, 0.10) and (9.50, -0.10) .. (node3);
		
					\draw [-, black!50] (node2) to (node4);
					\draw [-, black!50] (node2) to (node5);
					\draw [-, black!50] (node3) to (node6);
					\draw [-, black!50] (node3) to (node7);
		
					\draw [-, black!50] (node4) to (node8);
					\draw [-, black!50] (node4) to (node9);
					\draw [-, black!50] (node5) to (node10);
					\draw [-, black!50] (node5) to (node11);
					\draw [-, black!50] (node6) to (node12);
					\draw [-, black!50] (node6) to (node13);
					\draw [-, black!50] (node7) to (node14);
					\draw [-, black!50] (node7) to (node15);
		
					\draw [-, black!50] (node8) to (node16);
					\draw [-, black!50] (node8) to (node17);
					\draw [-, black!50] (node9) to (node18);
					\draw [-, black!50] (node9) to (node19);
					\draw [-, black!50] (node10) to (node20);
					\draw [-, black!50] (node10) to (node21);
					\draw [-, black!50] (node11) to (node22);
					\draw [-, black!50] (node11) to (node23);
					\draw [-, black!50] (node12) to (node24);
					\draw [-, black!50] (node12) to (node25);
					\draw [-, black!50] (node13) to (node26);
					\draw [-, black!50] (node13) to (node27);
					\draw [-, black!50] (node14) to (node28);
					\draw [-, black!50] (node14) to (node29);
					\draw [-, black!50] (node15) to (node30);
					\draw [-, black!50] (node15) to (node31);
		
					\draw [-, black!50] (node16) to (node32);
					\draw [-, black!50] (node16) to (node33);
					\draw [-, black!50] (node17) to (node34);
					\draw [-, black!50] (node17) to (node35);
					\draw [-, black!50] (node18) to (node36);
					\draw [-, black!50] (node18) to (node37);
					\draw [-, black!50] (node19) to (node38);
					\draw [-, black!50] (node19) to (node39);
					\draw [-, black!50] (node20) to (node40);
					\draw [-, black!50] (node20) to (node41);
					\draw [-, black!50] (node21) to (node42);
					\draw [-, black!50] (node21) to (node43);
					\draw [-, black!50] (node22) to (node44);
					\draw [-, black!50] (node22) to (node45);
					\draw [-, black!50] (node23) to (node46);
					\draw [-, black!50] (node23) to (node47);
					\draw [-, black!50] (node24) to (node48);
					\draw [-, black!50] (node24) to (node49);
					\draw [-, black!50] (node25) to (node50);
					\draw [-, black!50] (node25) to (node51);
					\draw [-, black!50] (node26) to (node52);
					\draw [-, black!50] (node26) to (node53);
					\draw [-, black!50] (node27) to (node54);
					\draw [-, black!50] (node27) to (node55);
					\draw [-, black!50] (node28) to (node56);
					\draw [-, black!50] (node28) to (node57);
					\draw [-, black!50] (node29) to (node58);
					\draw [-, black!50] (node29) to (node59);
					\draw [-, black!50] (node30) to (node60);
					\draw [-, black!50] (node30) to (node61);
					\draw [-, black!50] (node31) to (node62);
					\draw [-, black!50] (node31) to (node63);
		
					\draw [-, black!50] (node0) to [out=-0,in=150] (8.00, 5.60) to [out=-30,in=100] (8.50, 4.50) to [out=-80,in=95] (9.00, 1.00) to [out=-85,in=85] (8.85, -6.50) to [out=-95,in=60] (node1a);				
					\draw [-, black!50] (node1a) to (node2a);
					\draw [-, black!50] (node1a) .. controls (8.50, 0.10 - 13.20) and (9.50, -0.10 - 13.20) .. (node3a);
		
					\draw [-, black!50] (node2a) to (node4a);
					\draw [-, black!50] (node2a) to (node5a);
					\draw [-, black!50] (node3a) to (node6a);
					\draw [-, black!50] (node3a) to (node7a);
		
					\draw [-, black!50] (node4a) to (node8a);
					\draw [-, black!50] (node4a) to (node9a);
					\draw [-, black!50] (node5a) to (node10a);
					\draw [-, black!50] (node5a) to (node11a);
					\draw [-, black!50] (node6a) to (node12a);
					\draw [-, black!50] (node6a) to (node13a);
					\draw [-, black!50] (node7a) to (node14a);
					\draw [-, black!50] (node7a) to (node15a);
		
					\draw [-, black!50] (node8a) to (node16a);
					\draw [-, black!50] (node8a) to (node17a);
					\draw [-, black!50] (node9a) to (node18a);
					\draw [-, black!50] (node9a) to (node19a);
					\draw [-, black!50] (node10a) to (node20a);
					\draw [-, black!50] (node10a) to (node21a);
					\draw [-, black!50] (node11a) to (node22a);
					\draw [-, black!50] (node11a) to (node23a);
					\draw [-, black!50] (node12a) to (node24a);
					\draw [-, black!50] (node12a) to (node25a);
					\draw [-, black!50] (node13a) to (node26a);
					\draw [-, black!50] (node13a) to (node27a);
					\draw [-, black!50] (node14a) to (node28a);
					\draw [-, black!50] (node14a) to (node29a);
					\draw [-, black!50] (node15a) to (node30a);
					\draw [-, black!50] (node15a) to (node31a);
		
					\draw [-, black!50] (node16a) to (node32a);
					\draw [-, black!50] (node16a) to (node33a);
					\draw [-, black!50] (node17a) to (node34a);
					\draw [-, black!50] (node17a) to (node35a);
					\draw [-, black!50] (node18a) to (node36a);
					\draw [-, black!50] (node18a) to (node37a);
					\draw [-, black!50] (node19a) to (node38a);
					\draw [-, black!50] (node19a) to (node39a);
					\draw [-, black!50] (node20a) to (node40a);
					\draw [-, black!50] (node20a) to (node41a);
					\draw [-, black!50] (node21a) to (node42a);
					\draw [-, black!50] (node21a) to (node43a);
					\draw [-, black!50] (node22a) to (node44a);
					\draw [-, black!50] (node22a) to (node45a);
					\draw [-, black!50] (node23a) to (node46a);
					\draw [-, black!50] (node23a) to (node47a);
					\draw [-, black!50] (node24a) to (node48a);
					\draw [-, black!50] (node24a) to (node49a);
					\draw [-, black!50] (node25a) to (node50a);
					\draw [-, black!50] (node25a) to (node51a);
					\draw [-, black!50] (node26a) to (node52a);
					\draw [-, black!50] (node26a) to (node53a);
					\draw [-, black!50] (node27a) to (node54a);
					\draw [-, black!50] (node27a) to (node55a);
					\draw [-, black!50] (node28a) to (node56a);
					\draw [-, black!50] (node28a) to (node57a);
					\draw [-, black!50] (node29a) to (node58a);
					\draw [-, black!50] (node29a) to (node59a);
					\draw [-, black!50] (node30a) to (node60a);
					\draw [-, black!50] (node30a) to (node61a);
					\draw [-, black!50] (node31a) to (node62a);
					\draw [-, black!50] (node31a) to (node63a);

					\draw [-] (node45) to [out=-90,in=90] (6.50, -0.75) to [out=-90,in=0] (6.00, -2.00) to [out=180,in=0] (2.23, -2.00);  
					\draw [dashed] (2.23, -2.00) -- (-0.29, -2.00);  

					\draw [-] (node45) to [out=-90,in=-90] (node44);  
					\draw [-] (node45) to [out=-90,in=90] (8.875, -6.50) to [out=-90,in=90] (node47a);  

					\draw [-] (node44) to [out=-90,in=0] ([shift={(-0.50, -1.50)}]node44) to [out=180,in=0] ([shift={(+0.50, -1.50)}]node32) to [out=180,in=-90] (node32);  
					\draw [-] (node44) to [out=-90,in=90] (6.00, -0.75) to [out=-90,in=0] (5.50, -1.75) to [out=180,in=0] (1.50, -1.75) to [out=180,in=90] (1.00, -2.75) to [out=-90,in=90] (node48);  
					\draw [-] (node47a) to [out=-90,in=0] ([shift={(-0.50, -1.50)}]node47a) to [out=180,in=0] ([shift={(+0.50, -1.50)}]node32a) to [out=180,in=-90] (node32a);  
					\draw [-] (node47a) to [out=-90,in=90] (7.50, -13.95) to [out=-90,in=0] (7.00, -14.95) to [out=180,in=0] (1.50, -14.95) to [out=180,in=90] (1.00, -15.95) to [out=-90,in=90] (node48a);  

					\draw [-] (node32) to [out=-90,in=-90] (node33);  
					\draw [-] (node32) to [out=-90,in=180] ([shift={(+0.50, -1.25)}]node32) to [out=0,in=180] ([shift={(-0.50, -1.25)}]node43) to [out=0,in=-90] (node43);  
					\draw [-] (node48) to [out=-90,in=-90] (node49);  
					\draw [-] (node48) to [out=-90,in=180] ([shift={(+0.50, -1.25)}]node48) to [out=0,in=180] ([shift={(-0.50, -1.25)}]node59) to [out=0,in=-90] (node59);  
					\draw [-] (node32a) to [out=-90,in=-90] (node33a);  
					\draw [-] (node32a) to [out=-90,in=180] ([shift={(+0.50, -1.25)}]node32a) to [out=0,in=180] ([shift={(-0.50, -1.25)}]node43a) to [out=0,in=-90] (node43a);  
					\draw [-] (node48a) to [out=-90,in=-90] (node49a);  
					\draw [-] (node48a) to [out=-90,in=180] ([shift={(+0.50, -1.25)}]node48a) to [out=0,in=180] ([shift={(-0.50, -1.25)}]node59a) to [out=0,in=-90] (node59a);  

					\draw [-] (node33) to [out=-90,in=-90] (node34);  
					\draw [-] (node33) to [out=-90,in=180] ([shift={(+0.50, -1.00)}]node33) to [out=0,in=180] ([shift={(-0.50, -1.00)}]node38) to [out=0,in=-90] (node38);  
					\draw [-] (node43) to [out=-90,in=-90] (node41);  
					\draw [-] (node43) to [out=-90,in=-90] (node47);  
					\draw [-] (node49) to [out=-90,in=-90] (node50);  
					\draw [-] (node49) to [out=-90,in=180] ([shift={(+0.50, -1.00)}]node49) to [out=0,in=180] ([shift={(-0.50, -1.00)}]node54) to [out=0,in=-90] (node54);  
					\draw [-] (node59) to [out=-90,in=-90] (node57);  
					\draw [-] (node59) to [out=-90,in=-90] (node61);  
					\draw [-] (node33a) to [out=-90,in=-90] (node34a);  
					\draw [-] (node33a) to [out=-90,in=180] ([shift={(+0.50, -1.00)}]node33a) to [out=0,in=180] ([shift={(-0.50, -1.00)}]node38a) to [out=0,in=-90] (node38a);  
					\draw [-] (node43a) to [out=-90,in=-90] (node41a);  
					\draw [-] (node43a) to [out=-90,in=-90] (node45a);  
					\draw [-] (node49a) to [out=-90,in=-90] (node50a);  
					\draw [-] (node49a) to [out=-90,in=180] ([shift={(+0.50, -1.00)}]node49a) to [out=0,in=180] ([shift={(-0.50, -1.00)}]node54a) to [out=0,in=-90] (node54a);  
					\draw [-] (node59a) to [out=-90,in=-90] (node57a);  
					\draw [-] (node59a) to [out=-90,in=-90] (node61a);  

					\draw [-] (node34) to [out=-90,in=-90] (node35);  
					\draw [-] (node34) to [out=-90,in=-90] (node36);  
					\draw [-] (node38) to [out=-90,in=-90] (node37);  
					\draw [-] (node38) to [out=-90,in=-90] (node39);  
					\draw [-] (node41) to [out=-90,in=-90] (node40);  
					\draw [-] (node41) to [out=-90,in=-90] (node42);  
					\draw [-] (node47) to [out=-90,in=-90] (node46);  
					\draw [-] (node47) to [out=-76,in=90] (node63);  
					\draw [-] (node50) to [out=-90,in=-90] (node51);  
					\draw [-] (node50) to [out=-90,in=-90] (node52);  
					\draw [-] (node54) to [out=-90,in=-90] (node53);  
					\draw [-] (node54) to [out=-90,in=-90] (node55);  
					\draw [-] (node57) to [out=-90,in=-90] (node56);  
					\draw [-] (node57) to [out=-90,in=-90] (node58);  
					\draw [-] (node61) to [out=-90,in=-90] (node60);  
					\draw [-] (node61) to [out=-90,in=-90] (node62);  
					\draw [-] (node34a) to [out=-90,in=-90] (node35a);  
					\draw [-] (node34a) to [out=-90,in=-90] (node36a);  
					\draw [-] (node38a) to [out=-90,in=-90] (node37a);  
					\draw [-] (node38a) to [out=-90,in=-90] (node39a);  
					\draw [-] (node41a) to [out=-90,in=-90] (node40a);  
					\draw [-] (node41a) to [out=-90,in=-90] (node42a);  
					\draw [-] (node45a) to [out=-90,in=-90] (node44a);  
					\draw [-] (node45a) to [out=-90,in=-90] (node46a);  
					\draw [-] (node50a) to [out=-90,in=-90] (node51a);  
					\draw [-] (node50a) to [out=-90,in=-90] (node52a);  
					\draw [-] (node54a) to [out=-90,in=-90] (node53a);  
					\draw [-] (node54a) to [out=-90,in=-90] (node55a);  
					\draw [-] (node57a) to [out=-90,in=-90] (node56a);  
					\draw [-] (node57a) to [out=-90,in=-90] (node58a);  
					\draw [-] (node61a) to [out=-90,in=-90] (node60a);  
					\draw [-] (node61a) to [out=-90,in=-90] (node62a);  
				\end{tikzpictureHostGraph}
			\end{comment:upperBound}
			\vspace*{-0.27cm}
			\caption[Arrangement $\phi$ for the guest graph $G = (V, E)$ depicted in Figure~\ref{figure:guestGraphGForHG6DepictingTheArrangementPhi} (binary regular tree of height $h_G = 6$) -- second part.]{Arrangement $\phi$ for the guest graph $G = (V, E)$ depicted in Figure~\ref{figure:guestGraphGForHG6DepictingTheArrangementPhi} -- second part. Its objective function value is $OV(G, 2, \phi_A) = 584$.}
			\label{figure:arrangementPhiForHG6SecondPart}
		\end{figure}
	\end{example}
	
\medskip	
	
	\section{The {\em k}-balanced partitioning problem}
		\label{section:approximationRatio:kBalancedPartitioningProblem}
In this section we introduce the $k$-balanced partitioning problem and a special case of it which 
will be involved in the analysis of the approximation algorithm for the $DAPT(G,2)$ with a binary regular tree $G$. 
\medskip
 
		\begin{definition}[{\bf {\em k}-balanced partitioning problem}]
			\label{definition:kBPP}
Given a graph $G = (V, E)$ with $|V| = n$ and $k \geq 2$, a {\bf {\em k}-balanced partition} is a partition 
of the vertex set $V$ into $k$ non-empty {\bf partition sets} $V_1 \neq \emptyset$, $V_2 \neq \emptyset$, \ldots, 
$V_k \neq \emptyset$, where $\cup_{i = 1}^{k}{V_k} = V$, $V_i \cap V_j = \emptyset$ for every $i \neq j$ and 
$|V_i| \leq \left\lceil \frac{n}{k} \right\rceil$ for all $1 \leq i \leq k$. 
The {\bf {\em k}-balanced partitioning problem ({\em k}-BPP)} asks for a $k$-balanced partition $\mathscr{V}$ which minimises
\begin{equation}
\label{equation:definition:kBPP:oV}
c(G, \mathscr{V}) \defeq \Big|\big\{(u, v) \in E | u \in V_i \text{, } v \in V_j \text{, } i \neq j\big\}\Big|,
\end{equation}
where $\mathscr{V} \defeq \{V_i | 1 \leq i \leq k\}$.
\end{definition}

$k$-BPP is a well known ${\cal NP}$-hard problem (for $k = 2$ we get the {\em minimum bisection problem} which 
is ${\cal NP}$-hard, see {\sc Garey} and 
{\sc Johnson}~\cite{GareyJohnson:ComputersAndIntractabilityAGuideToTheTheoryOfNPCompleteness}). 
A lot of work has been done focusing on the computational complexity of the $k$-BPP. 
{\sc Andreev} and {\sc R\"{a}cke} proved further complexity results for a generalization allowing near-balanced 
partitions~\cite{AndreevRacke:BalancedGraphPartitioning}. {\sc Krauthgamer}, {\sc Naor} and {\sc Schwartz}
 provide an approximation algorithm achieving an approximation of 
$O(\sqrt{\log{n} \log{k}})$~\cite{KrauthgamerNaorSchwartz:PartitioningGraphsIntoBalancedComponents}. 
And finally, {\sc Feldmann} and {\sc Foschini} proved that the $k$-BPP remains $APX$-hard even if the graph $G$ is 
restricted to be an unweighted tree with constant maximum degree~\cite{FeldmannFoschini:BalancedPartitionsOfTreesAndApplications}.
		
We deal with a special case of this problem where $G = (V, E)$ is a binary regular tree of height $h \geq 1$ and 
where $k = 2^{k^\prime}$ and $1 \leq k^\prime \leq h$. 
The following facts are obvious:
\begin{observation}	\label{observation:kBalancedPartitioningProblemOnBinaryRegularTrees:someObviousEqualities}
Let $G = (V, E)$ be a binary regular tree of height $h \geq 1$ with $n = 2^{h + 1} - 1$ vertices. 
Let
$\mathscr{V} = \{V_1, V_2, \ldots, V_k\}$ be a $k$-balanced partition with $k = 2^{k^\prime}$ and  $1 \leq k^\prime \leq h$. 
Then		
one of the partition sets in $\mathscr{V}$    has   $n_s \defeq  \frac{n + 1}{k} - 1$ elements and is called 
{\bf the small partition set}. 
All other partition sets have  $n_b \defeq \frac{n + 1}{k}$ elements and are called {\bf big partition sets}. Moreover the following equalities clearly hold
\begin{equation} \label{equation:kBalancedPartitioningProblemOnBinaryRegularTrees:someObviousEqualities}
n_s= 2^{h - k^\prime + 1} - 1 \mbox{ and }	n_b= 2^{h - k^\prime + 1}\, .
\end{equation}
\end{observation}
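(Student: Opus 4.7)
The plan is to prove the observation by a short counting argument based on the arithmetic of powers of $2$. First I would note that since $k' \leq h$ we have $2^{k'} \mid 2^{h+1}$, so
\[
\left\lceil \frac{n}{k} \right\rceil = \left\lceil \frac{2^{h+1}-1}{2^{k'}} \right\rceil = \left\lceil 2^{h-k'+1} - \frac{1}{2^{k'}} \right\rceil = 2^{h-k'+1} = n_b.
\]
Hence the balance constraint of Definition~\ref{definition:kBPP} forces $|V_i| \leq n_b$ for every $i \in \{1,2,\ldots,k\}$.

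Next I would use the fact that the partition sets cover $V$ disjointly, so
\[
\sum_{i=1}^{k} |V_i| = n = 2^{h+1} - 1 = k \cdot 2^{h-k'+1} - 1 = k \cdot n_b - 1.
\]
Since each summand is at most $n_b$ and the total falls short of $k \cdot n_b$ by exactly $1$, at most one index can satisfy $|V_i| < n_b$, and that index must have $|V_i| = n_b - 1$. All remaining $k-1$ sets then have size exactly $n_b$; this identifies the small partition set and the big partition sets as claimed. Finally, the closed forms in~\eqref{equation:kBalancedPartitioningProblemOnBinaryRegularTrees:someObviousEqualities} follow immediately from
\[
n_b = \frac{n+1}{k} = \frac{2^{h+1}}{2^{k'}} = 2^{h-k'+1}, \qquad n_s = n_b - 1 = 2^{h-k'+1} - 1.
\]

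There is no real obstacle here: the statement is a straightforward arithmetic consequence of $n = 2^{h+1}-1$ together with the balance condition $|V_i| \leq \lceil n/k \rceil$. The only mild point one has to be careful about is the ceiling computation, which works out cleanly precisely because $k' \leq h$ guarantees that $2^{k'}$ divides $2^{h+1}$.
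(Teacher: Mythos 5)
Your proof is correct and is exactly the counting argument the paper has in mind: the paper states this as an unproved ``obvious'' observation, and your computation of $\lceil n/k\rceil = 2^{h-k'+1}$ followed by the deficit argument $\sum_{i=1}^{k}(n_b - |V_i|) = 1$ (forcing exactly one set of size $n_b-1$ and the rest of size $n_b$) is the natural and complete justification. No gaps.
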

%

The rest of this section is structured as follows: 
In Section~\ref{subsection:section:approximationRatio:kBalancedPartitioningProblem:solutionAlgorithmKBPP} 
we introduce
 an algorithm to construct  an optimal $2^{k^{\prime}}$-balanced partition $\mathscr{V}^*$ in a regular binary tree.  The  optimality is proven  in 
Section~\ref{subsection:section:approximationRatio:kBalancedPartitioningProblem:optimalityProofKBPP}. 
 
Section~\ref{subsection:section:approximationRatio:kBalancedPartitioningProblem:objectiveValueKBPP}
 provides a lower bound on the optimal value   $c(G, k, \mathscr{V}^*)$ of the objective function of the $2^{k^{\prime}}$-BPP in a regular binary tree.
		
\subsection{\boldmath A solution algorithm for the $2^{k^{\prime}}$-BPP on  regular binary trees\unboldmath}
\label{subsection:section:approximationRatio:kBalancedPartitioningProblem:solutionAlgorithmKBPP}
The  algorithm consists of three simple steps. 
Let $t \defeq h  - k^\prime + 2$ and $e \defeq \left\lfloor\frac{h + 1}{t}\right\rfloor - 1$.
\begin{enumerate}
\item First, we partition the tree $G$ by cutting all edges $(u, v) \in E$ with $\mbox{level}(u) = h - i t$ and 
$\mbox{level}(v) = h - i t + 1$, where $1 \leq i \leq e$. Roughly spoken, we separate $e$ horizontal bands of height $t - 1$ 
from the input tree $G$, from the bottom to the top. The height of  the remaining top part is then $\widehat{h}$  with 
 $t - 1 \leq \widehat{h} \leq 2 t - 2$. 
Let $p$ be the number of binary regular tress of height $t - 1$ contained in these bands.
\item Next consider the binary regular trees contained in the above mentioned bands  and cut all edges connecting 
their roots with their  right children, respectively.
After that each root remains connected to the corresponding left basic subtree, thus forming a big partition set,  since each  root and its left basic subbtree 
 tree  have    $2^{t - 2 + 1} - 1 + 1 = 
2^{h - k^\prime + 1} = n_b$ 
vertices altogether. 

On the other hand each of the   right  basic subtrees mentioned above  has  $2^{t - 2 + 1} - 1 = 
2^{h - k^\prime + 1} - 1 = n_b - 1$ vertices 
and   needs one more vertex in order to form a big partition set. 
Let $q \defeq \frac{2^{h - k^\prime + 1} - 1}{2^{h - k^\prime + 1}} p$. We split $p - q$ of the right basic subtrees into isolated vertices, thus obtaining $(p-q)(2^{h-k'+1} - 1)=\left(p-\frac{2^{h-k'+1}-1}{2^{h-k'+1}}p\right)\\(2^{h-k'+1}-1)=q$ isolated vertices. 
Each of them is paired  with the remaining $q$ non-split right basic subtrees in order to obtain further big partition sets. 
It is not difficult to check that $q\in \nz$.
\item Finally, let us consider the top part consisting of a binary regular tree of height $\widehat{h}$, 
$t - 1 \leq \widehat{h} \leq 2 t - 2$. 
We cut all edges $(u, v) \in E$ with $\mbox{level}(u) = h - (e + 1) t + 1$ and   $v$ being  the right child of $u$. 
Analogously as above we  obtain one big partition set for every vertex $u \in V$ with $\mbox{level}(u) = h - (e + 1) t + 1$
 together with its corresponding  left basic subtree. Moreover, each of the remaining right basic 
subtrees of the vertices $u$ as above  can be paired with one of the  vertices $u^\prime \in V$ with 
$\mbox{level}(u^\prime) < h - (e + 1) t + 1$,
(roughly spoken, these are  the vertices lying on the very top of the tree) in order to obtain further big partition sets. 
Again  simple computations show  that the  number of the  right basic subtrees mentioned above exceeds the number of the remaining 
vertices by exactly one.
Hence just  one right basic subtree of one vertex $u \in V$ with $\mbox{level}(u) = h - (e + 1) t + 1$ remains unpaired; 
this subtree builds the small partition set.
\end{enumerate}
\medskip

The following example illustrates this algorithm.
\begin{example}
				\label{example:kBPP:solutionAlgorithm}
Let us consider a binary regular tree $G = (V, E)$ of height $h = 5$ depicted in Figure~\ref{figure:16BalancedPartitionForH5} 
and let $k = 2^4 = 16$, i.e.\ $k^\prime = 4$.
\begin{figure}[htb]
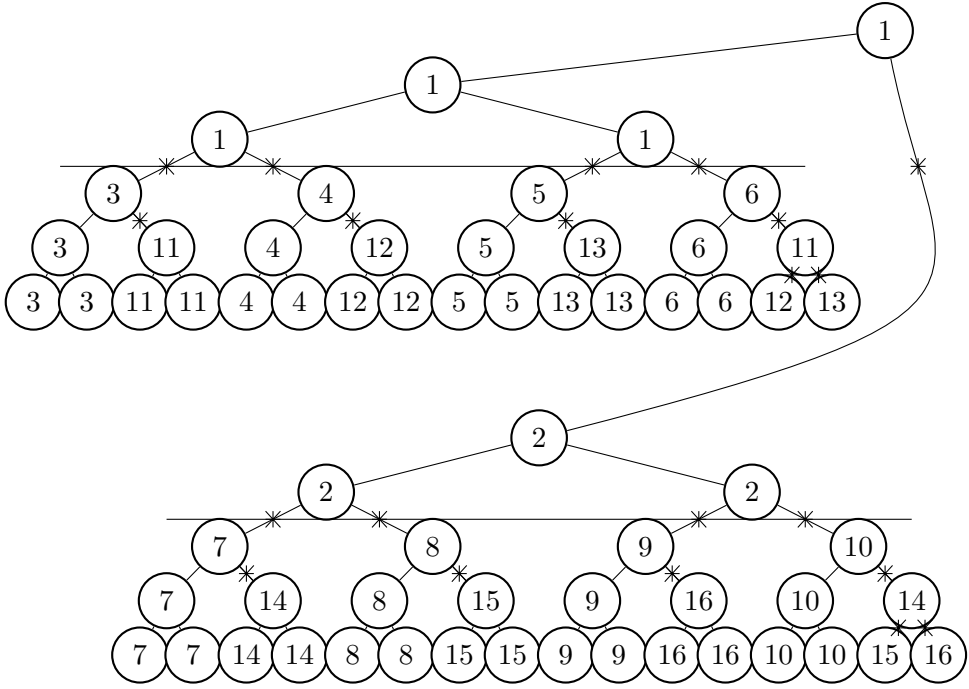

\centering
					\begin{comment:kBPP:solutionAlgorithm}
						\begin{tikzpictureGuestGraph}
							\node[node] (node1) at (8.00, 5.00) {1};
							
							\node[node] (node2) at (3.75, 4.00) {1};
				
							\node[node] (node4) at (1.75, 3.00) {1};
							\node[node] (node5) at (5.75, 3.00) {1};
				
							\node[node] (node8) at (0.75, 2.00) {3};
							\node[node] (node9) at (2.75, 2.00) {4};
							\node[node] (node10) at (4.75, 2.00) {5};
							\node[node] (node11) at (6.75, 2.00) {6};
				
							\node[node] (node16) at (0.25, 1.00) {3};
							\node[node] (node17) at (1.25, 1.00) {11};
							\node[node] (node18) at (2.25, 1.00) {4};
							\node[node] (node19) at (3.25, 1.00) {12};
							\node[node] (node20) at (4.25, 1.00) {5};
							\node[node] (node21) at (5.25, 1.00) {13};
							\node[node] (node22) at (6.25, 1.00) {6};
							\node[node] (node23) at (7.25, 1.00) {11};
				
							\node[node] (node32) at (0.00, 0.00) {3};
							\node[node] (node33) at (0.50, 0.00) {3};
							\node[node] (node34) at (1.00, 0.00) {11};
							\node[node] (node35) at (1.50, 0.00) {11};
							\node[node] (node36) at (2.00, 0.00) {4};
							\node[node] (node37) at (2.50, 0.00) {4};
							\node[node] (node38) at (3.00, 0.00) {12};
							\node[node] (node39) at (3.50, 0.00) {12};
							\node[node] (node40) at (4.00, 0.00) {5};
							\node[node] (node41) at (4.50, 0.00) {5};
							\node[node] (node42) at (5.00, 0.00) {13};
							\node[node] (node43) at (5.50, 0.00) {13};
							\node[node] (node44) at (6.00, 0.00) {6};
							\node[node] (node45) at (6.50, 0.00) {6};
							\node[node] (node46) at (7.00, 0.00) {12};
							\node[node] (node47) at (7.50, 0.00) {13};
				
							\node[node] (node3) at (4.75, -2.50) {2};
				
							\node[node] (node6) at (2.75, -3.50) {2};
							\node[node] (node7) at (6.75, -3.50) {2};
				
							\node[node] (node12) at (1.75, -4.50) {7};
							\node[node] (node13) at (3.75, -4.50) {8};
							\node[node] (node14) at (5.75, -4.50) {9};
							\node[node] (node15) at (7.75, -4.50) {10};
				
							\node[node] (node24) at (1.25, -5.50) {7};
							\node[node] (node25) at (2.25, -5.50) {14};
							\node[node] (node26) at (3.25, -5.50) {8};
							\node[node] (node27) at (4.25, -5.50) {15};
							\node[node] (node28) at (5.25, -5.50) {9};
							\node[node] (node29) at (6.25, -5.50) {16};
							\node[node] (node30) at (7.25, -5.50) {10};
							\node[node] (node31) at (8.25, -5.50) {14};
				
							\node[node] (node48) at (1.00, -6.50) {7};
							\node[node] (node49) at (1.50, -6.50) {7};
							\node[node] (node50) at (2.00, -6.50) {14};
							\node[node] (node51) at (2.50, -6.50) {14};
							\node[node] (node52) at (3.00, -6.50) {8};
							\node[node] (node53) at (3.50, -6.50) {8};
							\node[node] (node54) at (4.00, -6.50) {15};
							\node[node] (node55) at (4.50, -6.50) {15};
							\node[node] (node56) at (5.00, -6.50) {9};
							\node[node] (node57) at (5.50, -6.50) {9};
							\node[node] (node58) at (6.00, -6.50) {16};
							\node[node] (node59) at (6.50, -6.50) {16};
							\node[node] (node60) at (7.00, -6.50) {10};
							\node[node] (node61) at (7.50, -6.50) {10};
							\node[node] (node62) at (8.00, -6.50) {15};
							\node[node] (node63) at (8.50, -6.50) {16};
				
							\draw (node1) to (node2);
							\draw (node1) .. controls (8.50, 0.10) and (9.50, -0.10) .. (node3);
				
							\draw (node2) to (node4);
							\draw (node2) to (node5);
							\draw (node3) to (node6);
							\draw (node3) to (node7);
				
							\draw (node4) to (node8);
							\draw (node4) to (node9);
							\draw (node5) to (node10);
							\draw (node5) to (node11);
							\draw (node6) to (node12);
							\draw (node6) to (node13);
							\draw (node7) to (node14);
							\draw (node7) to (node15);
				
							\draw (node8) to (node16);
							\draw (node8) to (node17);
							\draw (node9) to (node18);
							\draw (node9) to (node19);
							\draw (node10) to (node20);
							\draw (node10) to (node21);
							\draw (node11) to (node22);
							\draw (node11) to (node23);
							\draw (node12) to (node24);
							\draw (node12) to (node25);
							\draw (node13) to (node26);
							\draw (node13) to (node27);
							\draw (node14) to (node28);
							\draw (node14) to (node29);
							\draw (node15) to (node30);
							\draw (node15) to (node31);
				
							\draw (node16) to (node32);
							\draw (node16) to (node33);
							\draw (node17) to (node34);
							\draw (node17) to (node35);
							\draw (node18) to (node36);
							\draw (node18) to (node37);
							\draw (node19) to (node38);
							\draw (node19) to (node39);
							\draw (node20) to (node40);
							\draw (node20) to (node41);
							\draw (node21) to (node42);
							\draw (node21) to (node43);
							\draw (node22) to (node44);
							\draw (node22) to (node45);
							\draw (node23) to (node46);
							\draw (node23) to (node47);
							\draw (node24) to (node48);
							\draw (node24) to (node49);
							\draw (node25) to (node50);
							\draw (node25) to (node51);
							\draw (node26) to (node52);
							\draw (node26) to (node53);
							\draw (node27) to (node54);
							\draw (node27) to (node55);
							\draw (node28) to (node56);
							\draw (node28) to (node57);
							\draw (node29) to (node58);
							\draw (node29) to (node59);
							\draw (node30) to (node60);
							\draw (node30) to (node61);
							\draw (node31) to (node62);
							\draw (node31) to (node63);
							
							\draw (0.25, 2.50) -- (7.25, 2.50);
							\draw (1.25, -4.00) -- (8.25, -4.00);

	\draw[shift={(1.25, 2.50)}] (-0.07,-0.15) -- (+0.07,+0.15) (-0.07,+0.15) -- (+0.07,-0.15) (-0.07, 0.00) -- (+0.07, 0.00) (0.00, -0.15) -- (0.00, +0.15);
	\draw[shift={(2.25, 2.50)}] (-0.07,-0.15) -- (+0.07,+0.15) (-0.07,+0.15) -- (+0.07,-0.15) (-0.07, 0.00) -- (+0.07, 0.00) (0.00, -0.15) -- (0.00, +0.15);
							\draw[shift={(5.25, 2.50)}] (-0.07,-0.15) -- (+0.07,+0.15) (-0.07,+0.15) -- (+0.07,-0.15) (-0.07, 0.00) -- (+0.07, 0.00) (0.00, -0.15) -- (0.00, +0.15);
							\draw[shift={(6.25, 2.50)}] (-0.07,-0.15) -- (+0.07,+0.15) (-0.07,+0.15) -- (+0.07,-0.15) (-0.07, 0.00) -- (+0.07, 0.00) (0.00, -0.15) -- (0.00, +0.15);
							\draw[shift={(2.25, -4.00)}] (-0.07,-0.15) -- (+0.07,+0.15) (-0.07,+0.15) -- (+0.07,-0.15) (-0.07, 0.00) -- (+0.07, 0.00) (0.00, -0.15) -- (0.00, +0.15);
							\draw[shift={(3.25, -4.00)}] (-0.07,-0.15) -- (+0.07,+0.15) (-0.07,+0.15) -- (+0.07,-0.15) (-0.07, 0.00) -- (+0.07, 0.00) (0.00, -0.15) -- (0.00, +0.15);
							\draw[shift={(6.25, -4.00)}] (-0.07,-0.15) -- (+0.07,+0.15) (-0.07,+0.15) -- (+0.07,-0.15) (-0.07, 0.00) -- (+0.07, 0.00) (0.00, -0.15) -- (0.00, +0.15);
							\draw[shift={(7.25, -4.00)}] (-0.07,-0.15) -- (+0.07,+0.15) (-0.07,+0.15) -- (+0.07,-0.15) (-0.07, 0.00) -- (+0.07, 0.00) (0.00, -0.15) -- (0.00, +0.15);

							\draw[shift={(1.00, 1.50)}] (-0.07,-0.15) -- (+0.07,+0.15) (-0.07,+0.15) -- (+0.07,-0.15) (-0.07, 0.00) -- (+0.07, 0.00) (0.00, -0.15) -- (0.00, +0.15);
							\draw[shift={(3.00, 1.50)}] (-0.07,-0.15) -- (+0.07,+0.15) (-0.07,+0.15) -- (+0.07,-0.15) (-0.07, 0.00) -- (+0.07, 0.00) (0.00, -0.15) -- (0.00, +0.15);
							\draw[shift={(5.00, 1.50)}] (-0.07,-0.15) -- (+0.07,+0.15) (-0.07,+0.15) -- (+0.07,-0.15) (-0.07, 0.00) -- (+0.07, 0.00) (0.00, -0.15) -- (0.00, +0.15);
							\draw[shift={(7.00, 1.50)}] (-0.07,-0.15) -- (+0.07,+0.15) (-0.07,+0.15) -- (+0.07,-0.15) (-0.07, 0.00) -- (+0.07, 0.00) (0.00, -0.15) -- (0.00, +0.15);
							\draw[shift={(2.00, -5.00)}] (-0.07,-0.15) -- (+0.07,+0.15) (-0.07,+0.15) -- (+0.07,-0.15) (-0.07, 0.00) -- (+0.07, 0.00) (0.00, -0.15) -- (0.00, +0.15);
							\draw[shift={(4.00, -5.00)}] (-0.07,-0.15) -- (+0.07,+0.15) (-0.07,+0.15) -- (+0.07,-0.15) (-0.07, 0.00) -- (+0.07, 0.00) (0.00, -0.15) -- (0.00, +0.15);
							\draw[shift={(6.00, -5.00)}] (-0.07,-0.15) -- (+0.07,+0.15) (-0.07,+0.15) -- (+0.07,-0.15) (-0.07, 0.00) -- (+0.07, 0.00) (0.00, -0.15) -- (0.00, +0.15);
							\draw[shift={(8.00, -5.00)}] (-0.07,-0.15) -- (+0.07,+0.15) (-0.07,+0.15) -- (+0.07,-0.15) (-0.07, 0.00) -- (+0.07, 0.00) (0.00, -0.15) -- (0.00, +0.15);
							\draw[shift={(7.125, 0.50)}] (-0.07,-0.15) -- (+0.07,+0.15) (-0.07,+0.15) -- (+0.07,-0.15) (-0.07, 0.00) -- (+0.07, 0.00) (0.00, -0.15) -- (0.00, +0.15);
							\draw[shift={(7.375, 0.50)}] (-0.07,-0.15) -- (+0.07,+0.15) (-0.07,+0.15) -- (+0.07,-0.15) (-0.07, 0.00) -- (+0.07, 0.00) (0.00, -0.15) -- (0.00, +0.15);
							\draw[shift={(8.125, -6.00)}] (-0.07,-0.15) -- (+0.07,+0.15) (-0.07,+0.15) -- (+0.07,-0.15) (-0.07, 0.00) -- (+0.07, 0.00) (0.00, -0.15) -- (0.00, +0.15);
							\draw[shift={(8.375, -6.00)}] (-0.07,-0.15) -- (+0.07,+0.15) (-0.07,+0.15) -- (+0.07,-0.15) (-0.07, 0.00) -- (+0.07, 0.00) (0.00, -0.15) -- (0.00, +0.15);

\draw[shift={(8.31, 2.50)}] (-0.07,-0.15) -- (+0.07,+0.15) (-0.07,+0.15) -- (+0.07,-0.15) (-0.07, 0.00) -- (+0.07, 0.00) (0.00, -0.15) -- (0.00, +0.15);
\end{tikzpictureGuestGraph}
\end{comment:kBPP:solutionAlgorithm}
\caption{$16$-balanced partition $\mathscr{V}^*$. Its objective value is $c(G, \mathscr{V}^*) = 21$. 
The  numbers on the vertices indicate the index of the partition set to which the  corresponding vertex belongs.}
\label{figure:16BalancedPartitionForH5}
\end{figure}
				
We have $t = h- k^\prime + 2 = 5 - 4 + 2 = 3$ and 
$e = \left\lfloor\frac{h + 1}{t}\right\rfloor - 1 = \left\lfloor\frac{5 + 1}{3}\right\rfloor - 1 = 1$.
\begin{enumerate}
\item Thus $i = 1$ and we cut  all edges $(u, v) \in E$ with $\mbox{level}(u) = h - i t = 5 - 1 \cdot 3 = 2$ and 
$\mbox{level}(v) = h - i t + 1 = 5 - 1 \cdot 3 + 1 = 3$, i.e.\ the edges cut by the two horizontal lines 
in Figure~\ref{figure:16BalancedPartitionForH5}.
\item Now,  consider the bottom band consisting of $p = 8$ binary regular trees of height $t - 1 = 3 - 1 = 2$ each and in each of them 
 cut all edges
 connecting the root with the right child. Each  root connected to its  corresponding left childs form a big partition set; we obtain
the big partition sets $V_i$, $3\le i\le 10$ depicted in  Figure~\ref{figure:16BalancedPartitionForH5}.
Notice that in Figure~\ref{figure:16BalancedPartitionForH5} a partition set $V_i$ contains the vertices 
marked by $i$, $1\le i\le 16$. 
 
  Set
$q \defeq \frac{2^{h - k^\prime + 1} - 1}{2^{h - k^\prime + 1}} p = \frac{2^{5 - 4 + 1} - 1}{2^{5 - 4 + 1}} 8 = 6$, 
and  cut all edges of  $p - q = 8 - 6 = 2$ arbitrarily  chosen right basic subtrees.  Pair each of the thereby arising 
 isolated vertices with the remaining $q = 6$ right basic subtrees to  obtain the big partition sets $V_i$ $11\le i\le 16$.
\item Finally, notice that  $h - (e + 1) t + 1 = 5 - (1 + 1) 3 + 1 = 0$. 
Thus we cut the edge connecting the root $v_1$ 
(note that $\mbox{level}(v_1) = 0$) with its right child according to the third step of the algorithm. 
We obtain the big partition sets $V_1$ and the small partition set  $V_2$ depicted in Figure~\ref{figure:16BalancedPartitionForH5}.
\end{enumerate}
				
The objective function value of this $16$-balanced partition $\mathscr{V}^* = \{V_1, V_2, \ldots, V_{16}\}$, i.e.\ the number of the cut 
edges,  equals   $c(G, \mathscr{V}^*) = 21$. By applying the results of the following section we conclude that this is the optimal 
$16$-balanced partition of $G$.
			\end{example}
			
		\subsection{Proof of the optimality for the algorithm described in 
Section~\ref{subsection:section:approximationRatio:kBalancedPartitioningProblem:solutionAlgorithmKBPP}}
			\label{subsection:section:approximationRatio:kBalancedPartitioningProblem:optimalityProofKBPP}
The optimality proof will make use of the  following reformulation  of the $k$-BPP.
			
Consider the input  graph $G = (V, E)$ of the $k$-BPP,  a  $k$-balanced partition $\mathscr{V} = V_1, V_2, \ldots, V_k$,
 and the respective induced subgraphs $G[V_i]$, $1\le i\le k$.
 Assume that $G[V_i]$ has $l_i$ connected components   $G_{i, j} = (V_{i, j}, E_{i, j})$ for $1\le j\le l_i$, for $1\le i\le k$.  
Define a new graph $G^\prime = (V^\prime, E^\prime)$ 
which contains one representative vertex $\bar{v}_{i,j}$ for each connected component $G_{i,j}$, $1\le i\le k$, $1\le j\le l_i$. 
Two vertices $\bar{v}_{i_1,j_1}$ and $\bar{v}_{i_2,j_2}$ are connected in $G'$ iff the connected components $G_{i_1,j_1}$, $G_{i_2,j_2}$
 are connected by an edge in $G$. Observe that if $G$ is a tree, then $G'$ is also a tree and the following equality holds

\begin{equation}
\label{equation:transformationKBPP}
 c(G, \mathscr{V})=|E^\prime|= |V^\prime| - 1\, .
\end{equation}

Thus the value of the objective function of the $k$-BPP corresponding to a $k$-balanced partition $\mathscr{V}$ of a tree $G$
  equals  the overall  number of the connected components of the subgraphs induced in $G$ by  the partition sets 
of $\mathscr{V}$ minus $1$. Hence the goal  of the $k$-BPP can be  rephrased as follows: Determine a $k$-balanced partition $\mathscr{V}^{\ast}$
such that the number of the connected components of   the subgraphs induced in $G$ by  the partition sets is minimised.	
			\begin{example}
				\label{example:transformationKBPP}
Let us consider the graph and the partition depicted in Figure~\ref{figure:16BalancedPartitionForH5}. 
The tree $G^\prime$ is depicted  in Figure~\ref{figure:graphGPrimeObtainedByTheTransformationOfTheGraphGAndThePartitionV}.
				\begin{figure}[htb]
					\centering
					\begin{comment:kBPP:optimalityProof}
						\begin{tikzpicture}[
								xscale=1.05, yscale=1.05,
								node/.style={circle, draw=black!100, fill=white!100, thick, inner sep=0pt, minimum size=9.5mm},
							]
							\node[node] (node1) at (1.50, 4.00) {$G_{1, 1}$};
							
							\node[node] (node2) at (5.50, 3.00) {$G_{2, 1}$};
							
							\node[node] (node3) at (0.00, 2.00) {$G_{3, 1}$};
							\node[node] (node4) at (1.00, 2.00) {$G_{4, 1}$};
							\node[node] (node5) at (2.00, 2.00) {$G_{5, 1}$};
							\node[node] (node6) at (3.00, 2.00) {$G_{6, 1}$};
							\node[node] (node7) at (4.00, 2.00) {$G_{7, 1}$};
							\node[node] (node8) at (5.00, 2.00) {$G_{8, 1}$};
							\node[node] (node9) at (6.00, 2.00) {$G_{9, 1}$};
							\node[node] (node10) at (7.00, 2.00) {$G_{10, 1}$};
							
							\node[node] (node11) at (0.50, 1.00) {$G_{11, 1}$};
							\node[node] (node12) at (1.50, 1.00) {$G_{12, 1}$};
							\node[node] (node13) at (2.50, 1.00) {$G_{13, 1}$};
							\node[node] (node14) at (3.50, 1.00) {$G_{11, 2}$};
							\node[node] (node15) at (4.50, 1.00) {$G_{14, 1}$};
							\node[node] (node16) at (5.50, 1.00) {$G_{15, 1}$};
							\node[node] (node17) at (6.50, 1.00) {$G_{16, 1}$};
							\node[node] (node18) at (7.50, 1.00) {$G_{14, 2}$};
							
							\node[node] (node19) at (3.00, 0.00) {$G_{12, 2}$};
							\node[node] (node20) at (4.00, 0.00) {$G_{13, 2}$};
							\node[node] (node21) at (7.00, 0.00) {$G_{15, 2}$};
							\node[node] (node22) at (8.00, 0.00) {$G_{16, 2}$};
							
							\draw (node1) to (node2);
							\draw (node1) to (node3);
							\draw (node1) to (node4);
							\draw (node1) to (node5);
							\draw (node1) to (node6);

							\draw (node2) to (node7);
							\draw (node2) to (node8);
							\draw (node2) to (node9);
							\draw (node2) to (node10);

							\draw (node3) to (node11);
							\draw (node4) to (node12);
							\draw (node5) to (node13);
							\draw (node6) to (node14);
							\draw (node7) to (node15);
							\draw (node8) to (node16);
							\draw (node9) to (node17);
							\draw (node10) to (node18);

							\draw (node14) to (node19);
							\draw (node14) to (node20);
							\draw (node18) to (node21);
							\draw (node18) to (node22);
						\end{tikzpicture}
					\end{comment:kBPP:optimalityProof}
					\caption{The graph $G^\prime$ corresponding to  the regular binary $G$ and the $16$-partition $\mathscr{V}$ 
depicted in Figure~\ref{figure:16BalancedPartitionForH5}.}
					\label{figure:graphGPrimeObtainedByTheTransformationOfTheGraphGAndThePartitionV}
				\end{figure}
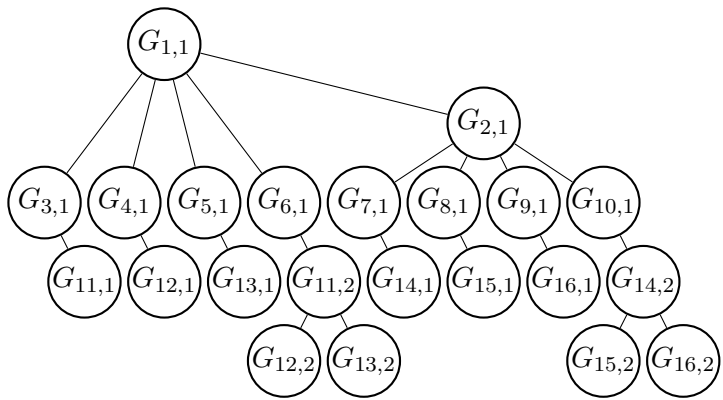
				
	The partition sets $V_i$, $1\le i\le 10$, generate one connected component each,    the partition sets $V_i$, $11\le i\le 16$ 
generate  two connected components each. Notice that equality~(\ref{equation:transformationKBPP})
is fulfilled:  the tree $G'$ has $22$ vertices and $21$ edges and  $c(G,\mathscr{V})=21$, see Example~\ref{example:kBPP:solutionAlgorithm}.
\end{example}

Denote by $n_i(\mathscr{V})$ be the number  of partition sets in   $\mathscr{V}$ 
which induce  $i$ connected components each in $G$, for $i \in \nz$. 
Notice now that the following observation holds.
\begin{observation}\label{monotonicity}
Consider the $2^{k'}$-BPP for a regular binary tree $G$ of height $h$, $h\ge k'$, and let $\mathscr{V}^*$
be the $k$-balanced partition computed by the algorithm described  in 
Section~\ref{subsection:section:approximationRatio:kBalancedPartitioningProblem:solutionAlgorithmKBPP}. 
 Then $n_1(\mathscr{V}^{\ast})\ge n_1(\mathscr{V})$ implies $c(G,\mathscr{V}^{\ast})\le c(G,\mathscr{V}^{\ast})$, for any $k$-balanced partition $\mathscr{V}$ of $G$. 
\end{observation}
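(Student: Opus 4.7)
The plan is to exploit the reformulation introduced just before the statement, namely equation~(\ref{equation:transformationKBPP}), which yields for any $k$-balanced partition $\mathscr{V}$ of a tree $G$ the identity
\[
c(G,\mathscr{V})\;=\;\sum_{j\ge 1} j\cdot n_j(\mathscr{V})\;-\;1.
\]
Since every partition set contributes at least one connected component, we have $\sum_{j\ge 1} n_j(\mathscr{V})=k$, so
\[
\sum_{j\ge 1} j\cdot n_j(\mathscr{V})
\;=\;n_1(\mathscr{V})+\sum_{j\ge 2} j\cdot n_j(\mathscr{V})
\;\ge\;n_1(\mathscr{V})+2\bigl(k-n_1(\mathscr{V})\bigr)
\;=\;2k-n_1(\mathscr{V}),
\]
and consequently $c(G,\mathscr{V})\ge 2k-n_1(\mathscr{V})-1$, with equality if and only if $n_j(\mathscr{V})=0$ for every $j\ge 3$, i.e.\ every partition set induces at most two connected components in $G$.

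The key step is therefore to verify that the partition $\mathscr{V}^{\ast}$ produced by the algorithm of Section~\ref{subsection:section:approximationRatio:kBalancedPartitioningProblem:solutionAlgorithmKBPP} achieves this equality case. Going through the three stages of the algorithm, each partition set is built in exactly one of the following ways: (i) the root of a basic subtree of height $t-1$ together with its left basic subtree, forming a single component of size $n_b$; (ii) one right basic subtree of height $t-1$ together with exactly one additional single vertex obtained from splitting another right basic subtree, forming a set with two components; (iii) the analogous "left basic subtree plus $u$" sets in the top part, forming one component; (iv) a right basic subtree from the top part paired with exactly one of the top vertices $u'$ (of level $<h-(e+1)t+1$), forming two components; and (v) the unique unpaired right basic subtree, forming the small partition set with one component. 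No construction in the algorithm ever merges three or more disjoint pieces into a single partition set, so $n_j(\mathscr{V}^{\ast})=0$ for all $j\ge 3$ and hence $c(G,\mathscr{V}^{\ast})=2k-n_1(\mathscr{V}^{\ast})-1$.

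Once these two ingredients are in place the conclusion is immediate: under the assumption $n_1(\mathscr{V}^{\ast})\ge n_1(\mathscr{V})$ we chain the identity for $\mathscr{V}^{\ast}$ with the inequality for $\mathscr{V}$,
\[
c(G,\mathscr{V}^{\ast})\;=\;2k-n_1(\mathscr{V}^{\ast})-1\;\le\;2k-n_1(\mathscr{V})-1\;\le\;c(G,\mathscr{V}),
\]
which is the asserted monotonicity. I expect the only nontrivial piece to be the bookkeeping in step~(ii) and step~(iv): one has to be careful that a "top vertex $u'$" placed into a partition set with a right basic subtree really is isolated from that subtree in $G$ (so that the partition set genuinely induces exactly two components and not one); this follows because $u'$ lies strictly above level $h-(e+1)t+1$ while the right basic subtree is rooted at level $h-(e+1)t+2$, and the unique $u'$-to-subtree path in $G$ passes through the cut edge at level $h-(e+1)t+1$.
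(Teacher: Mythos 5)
Your proof is correct and follows essentially the same route as the paper: both use the identity $c(G,\mathscr{V})=\sum_j j\,n_j(\mathscr{V})-1$, the lower bound $c(G,\mathscr{V})\ge 2k-n_1(\mathscr{V})-1$, and the fact that $\mathscr{V}^{\ast}$ attains equality because $n_j(\mathscr{V}^{\ast})=0$ for $j\ge 3$. The only difference is that you explicitly verify the last fact by walking through the algorithm's construction steps, whereas the paper simply asserts it; that is a welcome but inessential addition.
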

\begin{proof}
As argued above for every $k$-balanced partition $\mathscr{V}$ the value $c(G,\mathscr{V})$ of the 
objective function equals the overall number of connected componets induced in $G$ by the partition sets 
of $\mathscr{V}$ minus 1, and hence $c(G,\mathscr{V})=\sum_{i\in \nz}in_i(\mathscr{V}) - 1$ holds. 
If $n_1(\mathscr{V}^{\ast})\ge n_1(\mathscr{V})$ and since $n_i(\mathscr{V}^{\ast})=0$ for $i\ge 3$,
 the following equalities and inequalities hold:
\[
c(G,\mathscr{V})= \sum_{i\in \nz}in_i(\mathscr{V})-1\ge n_1(\mathscr{V})+2\big(k-n_1(\mathscr{V})\big)-1=2k-n_1(\mathscr{V})-1\ge \]
\begin{equation}
\label{reni3} 2k-n_1(\mathscr{V}^{\ast})-1  = n_1(\mathscr{V}^{\ast})+2(k-n_1(\mathscr{V}^{\ast}))-1=
n_1(\mathscr{V}^{\ast})+2n_2(\mathscr{V}^{\ast})-1=c(G,\mathscr{V}^{\ast})\, .
\end{equation}
\end{proof}

\begin{theorem}
\label{theorem:optimalityProofForAlgorithmSolutionAlgorithmKBPPMainAlgorithm}
Let $G = (V, E)$ be a binary regular tree of height $h \geq 1$ and let $k = 2^{k^{\prime}}$, 
where $1 \leq k^{\prime} \leq h$. Then the  algorithm presented in 
Section~\ref{subsection:section:approximationRatio:kBalancedPartitioningProblem:solutionAlgorithmKBPP}  
yields an optimal $k$-balanced partition 
$\mathscr{V}^{\ast}$.
\end{theorem}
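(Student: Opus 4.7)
The plan is to combine the tree reformulation culminating in equation~(\ref{equation:transformationKBPP}) with the monotonicity Observation above to reduce the optimality claim to a purely combinatorial statement about packing connected subtrees. Concretely, I would first inspect the output of the algorithm and verify that every partition set it produces induces at most two connected components in $G$: in step~2 each set is either a root together with its left basic subtree (one component) or a right basic subtree paired with an isolated vertex (two components); step~3 produces analogous cases, plus the small partition set, which is itself a basic subtree. Hence $n_i(\mathscr{V}^{*}) = 0$ for all $i\geq 3$, so the chain of (in)equalities in the proof of the Observation forces $c(G,\mathscr{V}^{*}) = 2k - n_1(\mathscr{V}^{*}) - 1$, while for an arbitrary feasible $\mathscr{V}$ one still has $c(G,\mathscr{V}) \geq 2k - n_1(\mathscr{V}) - 1$. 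Thus the theorem reduces to showing $n_1(\mathscr{V}) \leq n_1(\mathscr{V}^{*})$ for every $k$-balanced partition $\mathscr{V}$ of $G$.

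Second, I would compute $n_1(\mathscr{V}^{*})$ in closed form by reading it off the three algorithmic steps: step~2 contributes one connected big partition set per height-$(t{-}1)$ tree in the horizontal bands (giving $p$), step~3 contributes one connected big partition set per vertex at level $h-(e+1)t+1$ in the remaining top part, and finally the untouched right basic subtree yields the connected small partition set. Call the resulting number $N^{*}$.

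The third ingredient, and the main obstacle, is the matching upper bound: no $k$-balanced partition of $G$ can produce more than $N^{*}$ partition sets inducing connected subgraphs. Since such a connected partition set must be a connected subtree of $G$ containing exactly $n_b = 2^{h-k'+1}$ vertices (or $n_s = n_b - 1$ vertices in at most one exceptional case), this is a packing question: how many pairwise vertex-disjoint connected subtrees of these prescribed sizes fit inside a binary regular tree of height $h$? I plan to attack this by strong induction on $h$, with base case $h = k'$ immediate from the fact that $G$ has only $2^{k'+1}-1$ vertices and can accommodate at most one connected subtree per pair of sibling leaves. For the inductive step, I would split $G$ along the root $v_1$ into its two basic subtrees $G_L$ and $G_R$, then condition on the unique (if any) partition set that straddles the root: if this straddling set uses $s$ vertices from $G_L$ and $n_b - s$ from $G_R$, then what remains on each side is a packing instance of reduced size which the induction controls, and optimising over $s$ reproduces exactly the split chosen by the algorithm.

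The delicate step, which I expect to be the crux, is the bookkeeping when $h+1$ is not a multiple of $t = h-k'+2$: the algorithm must then place the small partition set within the topmost remaining basic subtree, and one must verify that relocating it elsewhere strictly decreases $n_1$. I expect this to follow from a short exchange argument showing that any ``misplaced'' small set forces at least one additional partition set to split into two components, thereby lowering $n_1$ by one and completing the induction.
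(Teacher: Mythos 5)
Your reduction is sound and, up to that point, matches the paper: both you and the authors use Observation~\ref{monotonicity} (together with the fact that every class produced by the algorithm induces at most two components) to reduce the theorem to the inequality $n_1(\mathscr{V}) \le n_1(\mathscr{V}^{\ast})$, and your further reformulation as a packing problem is legitimate, since the connected classes of any balanced partition form a packing of connected subtrees of size $n_b$ (at most one of size $n_s$), and conversely any such packing extends to a balanced partition. From there you diverge: the paper proves $n_1(\mathscr{V}) \le n_1(\mathscr{V}^{\ast})$ by a local exchange argument that transforms an arbitrary partition, without ever decreasing $n_1$, into one whose big connected classes coincide with those of $\mathscr{V}^{\ast}$, and then observes that deleting those classes leaves components with fewer than $n_s$ vertices, so at most the small class can still be connected. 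Your route is instead a direct extremal bound on the packing number, proved by induction on $h$.

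The gap is in that induction. After you remove the (unique) straddling class, the residual instances on the two sides are not complete binary trees but complete binary trees minus a connected subtree containing the root, so the induction hypothesis as you state it does not apply to them; you would need to strengthen it to cover such truncated trees, and also to record which side, if either, is still entitled to the single class of size $n_s$ --- and it is exactly in this strengthened form that the bookkeeping becomes hard, which is what your last paragraph waves at with ``a short exchange argument''. The base case likewise needs an actual determination of the maximum matching of a complete binary tree of height $h$, not just the one-subtree-per-sibling-pair remark. None of this looks unfixable, but as written the central combinatorial lemma --- that no packing exceeds $N^{\ast}$ --- is asserted rather than proved, and it carries essentially the whole content of the theorem. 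The paper's transformation argument is worth comparing here: by always re-aligning the deepest vertex on which $\mathscr{V}$ disagrees with $\mathscr{V}^{\ast}$, it never has to reason about truncated trees or split the small class between subinstances at all.
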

			
\begin{proof}
Due to Observation~\ref{monotonicity} it is enough to show that 
$n_1(\mathscr{V}^{\ast})\ge n_1(\mathscr{V})$ for any $k$-balanced partition $\mathscr{V}$ of $G$.

 Let us first show that  every $k$-balanced partition   $\mathscr{V} \eqdef \mathscr{V}_0$ can be transformed step by step into a sequence $\mathscr{V} \eqdef \mathscr{V}_0, \mathscr{V}_1,\ldots, \mathscr{V}_l=\mathscr{V}^{\prime}$, $l\in \nz$, of 
$k$-balanced partitions  with the following properties:
\begin{itemize}
\item[(a)]
 $n_1(\mathscr{V}_t)\ge n_1(\mathscr{V}_{t-1})$ holds for every $t\in\{1,2,\ldots,l\}$, and  
\item[(b)] the   big partition sets  of $\mathscr{V}^{\prime}$ which induce  one connected component in $G$ each
coincide 
with the big partition sets of $\mathscr{V}^{\ast}$ which induce one connected  component in $G$ each.
\end{itemize}

In the following the steps of this transformation are explained.  
Consider a $k$-balanced partition   $\mathscr{V}$ whose big partition sets which induce one connected 
component in $G$ each do 
not 
coincide with the corresponding partition sets of $\mathscr{V}^{\ast}$. 

For every vertex $v\in V$ let $P_{\mathscr{V}}(v)$  be the uniquely determined partition set in $\mathscr{V}$ such 
that $v\in P_{\mathscr{V}}(v)$. Consider the  vertices  $v \in V$ with $\mbox{level}(v) = h - it + 1$, where $1\le i\le e$,
 $e=\lfloor \frac{h+1}{t}\rfloor -1$, 
 and choose among them a vertex with the largest level such that $P_{\mathscr{V}}(v)\neq P_{\mathscr{V}^{\ast}}(v)$.
Perform now the following transformation steps.
\begin{description}
\item{Case 1.} If the partition set $P_{\mathscr{V}}(v)$ consists of the vertex $v \in V$ together with the regular 
binary subtree of height $h - k^\prime$ rooted at its right child, 
then exchange   the subtrees rooted at the left and the  right child of $v$, respectively,  to obtain a 
new 
$k$-balanced partition $\mathscr{V}^{\prime}$ for which obviously $n_1(\mathscr{V})=n_1(\mathscr{V}^{\prime})$ holds. 
\item{Case 2.} If   Case 1 does not arise, then $P_{\mathscr{V}}(v)$ contains neither the regular 
binary subtree of height $h - k^\prime$ rooted at the  right child of $v$ nor  the regular 
binary subtree of height $h - k^\prime$ rooted at its left child. At least one of these two subtrees does not build a 
small partition set in $\mathscr{V}$. Let this be the left subtree (otherwise we would apply an exchange of the two 
subtrees as in Case 1). Denote this subtree by $T$. 
If $P_{\mathscr{V}}(v)$ is a big component, then
  exchange the  vertices contained in  $P_{\mathscr{V}}(v)$ 
and   the vertices of  $T$  (the one by one assignment 
of the corresponding vertices is done arbitrarily).  
Denote  the resulting balanced partition by  $\mathscr{V}^{\prime}$.
Clearly $P_{\mathscr{V}^{\prime}}(v)$ induces one connected component in $G$. 
Moreover $P_{\mathscr{V}}(u)$ induces more than one connected component in $G$, for every $u\in  V(T)$, because  $T$ does 
not  build a small partition set in $\mathscr{V}$. Then $n_1(\mathscr{V}^{\prime})\ge n_1(\mathscr{V})$ holds 
(no partitition sets inducing one connected component in $G$ are destroied).
 If $P_{\mathscr{V}}(v)$ is the small component, then again  exchange the vertices contained in $P_{\mathscr{V}}(v)$ 
against  all but one of  the vertices in  the subtree $T$ of  height $h - k^\prime$ rooted at the  left child of $v$ (the one 
by one assignment 
of the corresponding vertices is again done arbitrarily). Then  add the remaining vertex of $T$ to the partition set containig $v$ and 
resulting after that exchange. Denote the resulting  balanced partition by  $\mathscr{V}^{\prime}$.
Clearly $P_{\mathscr{V}^{\prime}}(v)$ induces one connected component in $G$.   Further  $P_{\mathscr{V}}(u)$ induces more 
than one connected component in $G$, for every $u\in  V(T)$, because  $T$ does 
not  build a small partition set in $\mathscr{V}$. Thus $n_1(\mathscr{V}^{\prime})\ge n_1(\mathscr{V})$ holds again 
for the same reason as above. 
\end{description}

We repeat the above transformation step as long as there are vertices $v$ for which $P_{\mathscr{V}}(v)\neq P_{\mathscr{V}^{\ast}}(v)$, where $\mathscr{V}$ denotes the most recently constructed $k$-balanced partition. 
We end up with  a $k$-balanced partition $\mathscr{V}^{\prime}$  which contains as partition 
sets all big partitition sets   of $\mathscr{V}^{\ast}$ which induce one connected component in $G$, respectively, 
Moreover $\mathscr{V}^{\prime}$ fulfills the following inequality
\begin{equation}
\label{reni1}
n_1(\mathscr{V}^{\prime})\ge n_1(\mathscr{V})\, .
\end{equation}
 
Notice finally that by removing from $G$ all big partition sets of $\mathscr{V}^{\ast}$ which induce one connected 
component in $G$, respectively, we obtain a graph whose largest connected component contains at most $2^{h-k'}-1$ vertices.
So, if $\mathscr{V}^{\prime}$ contains any partition sets which induce one connected component in $G$ besides the   
big partition sets of $\mathscr{V}^{\ast}$ inducing one connected cpmponent, than these partition sets should be 
small ones.  Finally, since in every $k$-balanced partition there is only one small partition set and 
 the small partition set in   $\mathscr{V}^{\ast}$ induces one connected component in $G$ we get 
\begin{equation}\label{reni2}
n_1(\mathscr{V}^{\ast})\ge 
n_1(\mathscr{V}^{\prime})\, .
\end{equation} 
By combining (\ref{reni1}) and (\ref{reni2}) we get $n_1(\mathscr{V}^{\ast})\ge n_1(\mathscr{V})$ and this completes the proof.
\end{proof}
\subsection{The optimal value of  the $2^{k^{\prime}}$-BPP on regular binary trees }
\label{subsection:section:approximationRatio:kBalancedPartitioningProblem:objectiveValueKBPP}
According to equality~(\ref{reni3}) the optimal value $c(G, \mathscr{V}^*)$ of the $2^{k'}$-BPP on a
 regular binary tree of height $h$ for  $1\le k^{\prime}\le h$ is given as  
$c(G, \mathscr{V}^*)=2k-n_1(\mathscr{V}^{\ast})-1$. Recall that
 $n_1(\mathscr{V}^{\ast})$ is the number of partitions sets of $\mathscr{V}^{\ast}$ which induce exactly 
one connencted component each in $G$. Observe that for every $i \in \nz$, $1\le i\le e+1$, 
the algorithm presented in 
Section~\ref{subsection:section:approximationRatio:kBalancedPartitioningProblem:solutionAlgorithmKBPP} 
constructs eaxctly $2^{h-it+1}$ big partition sets inducing one connected component in $G$,  respectively.
More precisely, it constructs one such partition set  for each vertex of level   $h-it+1$, where 
the partition sets arise in the $i$-th bands of height $t-1$   by cutting the edge 
joining the above mentioned vertices  to their  right children, respectively. 
Finally in its last step the algorithm constructs the small  partition set which  also induces one connected component in $G$. 

\noindent Thus the following equality holds 
\begin{equation}\label{nr:1:components}
n_1(\mathscr{V}^{\ast})=1+\sum_{i=1}^{e+1} 2^{h-it+1}= 
1+ 2^{h+1}\frac{1-\left(\frac{1}{2^t}\right)^{e+1}}{2^t\left(1-\frac{1}{2^t}\right)}\, ,
\end{equation}
and this implies 
\begin{equation}\label{objfuncval:kBPP}
 c(G, \mathscr{V}^*)=2k-2 - 2^{h+1}\frac{1-\left(\frac{1}{2^t}\right)^{e+1}}{2^t\left(1-\frac{1}{2^t}\right)}\, .\end{equation}

\noindent For technical reasons we derive a lower bound  for $c(G, \mathscr{V}^*)$  
which is given as a closed formula depending just on $k$. 
\begin{lemma}\label{lemma:objectiveValueOfAnOptimalKBalancedPartitionLowerBound}
Let $G = (V, E)$ be a  regular binary tree of height $h \geq 1$ and let $k = 2^{k^\prime}$, 
where $1 \leq k^\prime \leq h$. Let  $\mathscr{V}^*$ be the optimal $k$-balanced partition in $G$
computed by the algorithm described in 
Section~\ref{subsection:section:approximationRatio:kBalancedPartitioningProblem:solutionAlgorithmKBPP}. 
The optimal value $c(G, \mathscr{V}^*)$ of the $k$-BPP in $G$ fulfills the following (in)equalities:
\[ c(G, \mathscr{V}^*) \ge \frac{10}{7}k - 2\, ,   \mbox{ if $k^{\prime}\le h-1$,} \]
\[ c(G, \mathscr{V}^*) = \frac{4}{3} k - \frac{3}{2} + \frac{1}{6} (-1)^{\log_2{k}}\, , \mbox{if $k^{\prime}= h$, and }\]
\[ c(G, \mathscr{V}^*) =\frac{3}{2} k - 2\, , \mbox{if $k^{\prime}\le \lfloor\frac{h}{2}\rfloor +1$.}\]
\begin{proof}
\begin{description}
\item{If  $k^{\prime}\le h-1$} we get  $t = h - k^\prime + 
2 \geq h - (h - 1) + 2 = 3$ and the following inequalities hold 
$$n_1(\mathscr{V}^{\ast})\le  1+2^{h+1}\left(\frac{1}{1-\frac{1}{2^t}}-1\right)= 1+\frac{2^{h+1}\frac{1}{2^t}}{1-\frac{1}{2^t}}=1+\frac{2^{h+1-t}}{1-\frac{1}{2^t}}=1+\frac{2^{k^{\prime}-1}}{1-\frac{1}{8}}=$$ $$1+\frac{2^{k^{\prime}}}{\frac{7}{4}}=1+\frac{4}{7}k\,. $$
The last inequality implies 
\begin{equation}
\label{equation:lemma:objectiveValueOfAnOptimalKBalancedPartitionLowerBound}
(G, \mathscr{V}^*)=2k-n_1(\mathscr{V}^{\ast})-1\ge \frac{10}{7} k - 2\, .
\end{equation}
\item{If $k^{\prime}=h$} we get  $t = h - k^\prime + 2 = 2$
and $e = \left\lfloor\frac{h + 1}{t}\right\rfloor - 1 = \left\lfloor\frac{h + 1}{2}\right\rfloor - 1$. 
 If $h$ is odd,  $e = \frac{h + 1}{2} - 1$ holds, and  if $h$ is even we get 
$e=\lceil\frac{h+1}{2}\rceil=\frac{h}{2}$. 
By setting this values for $e$ and $t$ in equation (\ref{nr:1:components}) and simplifying we get
\[ n_1(\mathscr{V}^{\ast})=\left \{\begin{array}{ll}
\frac{2}{3}2^h+\frac{2}{3} & \mbox{if $h$ is odd}\\
\frac{2}{3}2^h+\frac{1}{3} & \mbox{if $h$ is even}\end{array}\right . \]
By plugging these expressions for $n_1(\mathscr{V}^{\ast})$ into equation \ref{objfuncval:kBPP} we obtain 
\begin{equation}
\label{equation:lemma:kBPPObjectiveValueForKPrimeEqualH:proof}
c(G, \mathscr{V}^{\ast})=2k-n_1(\mathscr{V}^{\ast})-1= 
\frac{4}{3} k - \frac{3}{2} + \frac{1}{6} (-1)^{\log_2{k}} \mbox{ in both cases.}
\end{equation}

\item{If  $k^{\prime}\le \lfloor \frac{h}{2}\rfloor +1$} we get   $t = h - k^\prime + 2$ and 
$e = \left\lfloor\frac{h + 1}{t}\right\rfloor - 1 = \left\lfloor\frac{h + 1}{h - k^\prime + 2}\right\rfloor - 1 \leq \\ \left\lfloor\frac{h + 1}{h - \left\lfloor\frac{h}{2}\right\rfloor + 1}\right\rfloor - 1$. 
By distinguishing the two cases when  $h$ is odd and $h$ is even it can be easily observed that  
 $e=0$ in both cases. By substituting  $e$ by $0$ in  equation \ref{nr:1:components} we get
$$n_1(\mathscr{V}^{\ast})=1+2^{h+1}\frac{1}{2^t}=1+\frac{2^{h+1}}{2^{h-k^{\prime}+2}}=1+2^{k^{\prime}-1}=1+\frac{2^{k^{\prime}}}{2}=1+\frac{k}{2}\, , $$
and then by plugging this into equation \ref{objfuncval:kBPP}
\begin{equation}
\label{equation:lemma:kBPPObjectiveValueForKPrimeSmallerEqualH21:proof}
c(G, \mathscr{V}^*)= 2k-1-\frac{k}{2} -1 = \frac{3}{2} k - 2.
\end{equation}
\end{description}
\end{proof}
\end{lemma}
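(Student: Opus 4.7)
The plan is to exploit the closed-form expression for $n_1(\mathscr{V}^*)$ derived in (\ref{nr:1:components}), namely
$$n_1(\mathscr{V}^*) \;=\; 1 + 2^{h+1}\,\frac{1-(1/2^t)^{e+1}}{2^t\bigl(1-1/2^t\bigr)},$$
together with the identity $c(G,\mathscr{V}^*) = 2k - 1 - n_1(\mathscr{V}^*)$ which follows by combining (\ref{reni3}) and (\ref{objfuncval:kBPP}). The three parts of the lemma correspond to three regimes of the parameters $t = h-k'+2$ and $e = \lfloor(h+1)/t\rfloor - 1$, and I would treat them separately.

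For the first bound ($k' \le h-1$), I would use that $t \ge 3$, whence $2^t \ge 8$. The finite geometric sum implicit in the closed form is trivially bounded above by its infinite counterpart: $\sum_{i=0}^{e}(1/2^t)^i \le 1/(1-1/2^t) \le 8/7$. This yields $n_1(\mathscr{V}^*) \le 1 + (2^{h+1}/2^t)\cdot(8/7) = 1 + (8/7)\cdot 2^{k'-1} = 1 + 4k/7$, and substitution into $c = 2k-1-n_1$ gives the claimed lower bound $c(G,\mathscr{V}^*) \ge \tfrac{10}{7}k - 2$.

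For the third equality ($k' \le \lfloor h/2\rfloor + 1$), I would show $e=0$. From $t = h-k'+2 \ge h - \lfloor h/2\rfloor + 1 = \lceil h/2\rceil + 1$, a brief case split on the parity of $h$ shows $t > (h+1)/2$, so $\lfloor(h+1)/t\rfloor = 1$. Plugging $e=0$ into the closed form collapses the geometric factor to $1$, yielding exactly $n_1(\mathscr{V}^*) = 1 + 2^{h+1}/2^t = 1 + 2^{k'-1} = 1 + k/2$, and hence $c(G,\mathscr{V}^*) = \tfrac{3}{2}k - 2$.

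For the middle identity ($k' = h$), I would set $t=2$ and split on the parity of $h$. In the odd case one gets $e = (h-1)/2$ and $(1/4)^{e+1} = 1/2^{h+1}$, while in the even case $e = h/2 - 1$ and $(1/4)^{e+1} = 1/2^{h}$. Summing the corresponding finite geometric series and simplifying yields $n_1(\mathscr{V}^*) = \tfrac{2}{3}2^h + \tfrac{2}{3}$ in the first subcase and $n_1(\mathscr{V}^*) = \tfrac{2}{3}2^h + \tfrac{1}{3}$ in the second, and substitution into $c = 2k-1-n_1$ gives $\tfrac{4}{3}k - \tfrac{5}{3}$ and $\tfrac{4}{3}k - \tfrac{4}{3}$ respectively. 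Both subcases can be unified into the stated formula $\tfrac{4}{3}k - \tfrac{3}{2} + \tfrac{1}{6}(-1)^{\log_2 k}$ by checking that the parity-dependent correction $\tfrac{1}{6}(-1)^h$ matches the residual constant. I expect this parity bookkeeping in the second case to be the only delicate step; the other two parts reduce to monotonicity/boundedness of a geometric series.
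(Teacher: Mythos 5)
Your proposal is correct and follows essentially the same route as the paper: it starts from the closed form (\ref{nr:1:components}) combined with $c(G,\mathscr{V}^*) = 2k - 1 - n_1(\mathscr{V}^*)$ and performs the same three-case analysis on $t$ and $e$ (bounding the geometric sum by its infinite counterpart for $k'\le h-1$, parity split for $k'=h$, and $e=0$ for $k'\le\lfloor h/2\rfloor+1$). Incidentally, your value $e=h/2-1$ in the even subcase of $k'=h$ is the correct one; the paper's displayed $e=\lceil\frac{h+1}{2}\rceil=\frac{h}{2}$ there is a slip, though its final formula is unaffected.
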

\medskip
\section{The approximation ratio of Algorithm~\ref{algorithm:arrangementPhiU}}\label{section:approxratio}
In order to estimate an approximation ratio $\rho$ for Algorithm~\ref{algorithm:arrangementPhiU} we will 
exploit the relationship 
between  the {\em DAPT} on binary regular trees and the  $k$-{\em BPP} and obtain a lower bound for the 
objective function value of the {\em DAPT} in terms of the special case of  $k$-BPP where $k$ is a power of two. 
\smallskip
	
	Let   the guest graph $G = (V, E)$ and  the host graph  $T$ be regular binary trees  of heights 
$h_G \geq 1$ and 
$h = h_G + 1$,  respectively, 
and let $\phi$ be an arbitrary arrangement with corresponding  objective value
 $OV(G, 2, \phi) = \sum_{(u, v) \in E}{d_T\big(\phi(u), \phi(v)\big)}$.
The length $d_T\big(\phi(u), \phi(v)\big)$ of the unique $\phi(u)$-$\phi(v)$-path in the binary regular tree $T$ is even
for any two vertices $u$ and $v$ in $G$ (see  and Observation~\ref{observation:distance} ).
Moreover,   $2 \leq d_T\big(\phi(u), \phi(v)\big) \leq 2 h$ holds for any two vertices $u$ and $v$ in $G$. 
Let  $a_i(\phi)$, $1 \leq i \leq h$, be  the number of edges which contribute to the value of the  objective function  by $2 i$, i.e.\ 
the number of edges $(u,v)\in E(G)$  for which  $d_T\big(\phi(u), \phi(v)\big)=2i$ holds. Then 
	\begin{equation}
		\label{equation:objectiveValueByUsingTheCoefficientsA1A2LdotsAH}
		OV(G, 2, \phi) = a_h(\phi) \cdot 2 h + a_{h - 1}(\phi) \cdot 2 (h - 1) + \ldots + a_1(\phi) \cdot 2 = 
2 \sum_{i = 1}^h{a_i(\phi) i}.
	\end{equation}
 The number of edges  which contribute to the value of the objective function by {\em at least  $2i$}, i.e.\ 
the number of edges $(u,v)\in E(G)$  for which  $d_T\big(\phi(u), \phi(v)\big)\ge 2i$ holds,   is  given by the 
following the {\bf partial sums}
	\begin{equation}
		\label{equation:partialSums}
		s_i(\phi) \defeq \sum_{j = i}^h{a_j(\phi)} \text{ for all } 1 \leq i \leq h.
	\end{equation}
Clearly,  the following equalities hold 
	\begin{equation}
		\label{equation:partialSumsInverseFunction}
		a_i(\phi) = \left\{ 
			\begin{array}{ll}
				s_i(\phi)  - s_{i + 1}(\phi)
					& \text{for } 1 \leq i \leq h - 1\\
				s_i(\phi)
					& \text{for } i = h\\
			\end{array} \right. \, .
	\end{equation}
By plugging this  into the objective function in (\ref{equation:objectiveValueByUsingTheCoefficientsA1A2LdotsAH}) we get

	\begin{equation}
		\label{equation:objectiveValueByUsingTheCoefficientsS1S2LdotsSH}	
			OV(G, 2, \phi)
				 = 2 \sum_{i = 1}^h{a_i(\phi) i}
				 = 2 \left(\left(\sum_{i = 1}^{h - 1}{i \big(s_i(\phi) - s_{i + 1}(\phi)\big)}\right) + h s_h(\phi) \right)
				 = 2 \sum_{i = 1}^h{s_i(\phi)}.		
	\end{equation}

	\begin{example}
\label{example:coefficientsA1PhiStarA2PhiStarLdotsAHStarAndPartialSumsS1PhiStarS2PhiStarLdotsSHPhiStar}
Let us consider the guest graph in Figure~\ref{figure:guestGraphGForHG3DepictingTheArrangementPhiA} and the arrangement $\phi_A$  obtained by applying 
Algorithm~\ref{algorithm:arrangementPhiU}; this arrangement is  depicted in 
Figure~\ref{figure:arrangementPhiUForHG3}.
 The coefficients $a_i(\phi_A)$, $s_i(\phi_A)$, for  $1\le i\le h$  are listed in 
Table~\ref{table:example:coefficientsA1PhiStarA2PhiStarLdotsAHStarAndPartialSumsS1PhiStarS2PhiStarLdotsSHPhiStar}.

\begin{table}[htb]
			\centering
			\begin{tabular}{l||r|r|r|r}
				$i$
					& $4$
						& $3$
							& $2$
								& $1$\\
				\hline\hline
				$a_i(\phi_A)$
					& $1$
						& $3$
							& $5$
								& $5$\\
				\hline
				$s_i(\phi_A)$
					& $1$
						& $4$
							& $9$
								& $14$\\
			\end{tabular}
\caption{Coefficients $a_i(\phi_A)$ and partial sums $s_i(\phi_A)$, for  $1\le i\le h$.}
\label{table:example:coefficientsA1PhiStarA2PhiStarLdotsAHStarAndPartialSumsS1PhiStarS2PhiStarLdotsSHPhiStar}
\end{table}

By applying  (\ref{equation:objectiveValueByUsingTheCoefficientsA1A2LdotsAH}) and 
 (\ref{equation:objectiveValueByUsingTheCoefficientsS1S2LdotsSH}) we obtain the corresponding objective function value, 
respectively, as follows: 
\begin{equation}
\label{equation:example:coefficientsA1PhiStarA2PhiStarLdotsAHStarAndPartialSumsS1PhiStarS2PhiStarLdotsSHPhiStarObjectiveValue}
			OV(G, 2, \phi_A) = 2 \sum_{i = 1}^h{a_i(\phi_A) i} = 2 (5 \cdot 1 + 5 \cdot 2 + 3 \cdot 3 + 1 \cdot 4) = 56,
\end{equation}
\begin{equation}
\label{equation:example:coefficientsA1PhiStarA2PhiStarLdotsAHStarAndPartialSumsS1PhiStarS2PhiStarLdotsSHPhiStarObjectiveValueLowerBound}
			OV(G, 2, \phi_A) = 2 \sum_{i = 1}^h{s_i(\phi_A)} = 2 (14 + 9 + 4 + 1) = 56.
\end{equation}
\end{example}
	
For $h_G = 0$ the arrangement $\phi_A$ is obviously optimal, so let us assume that $h_G \geq 1$ through the rest of this section.
The next  lemma and its corollary  give closed formulas for the coefficents $a_i(\phi_A)$ and the partial sums $s_i(\phi_A)$, 
$1\le i\le h$, corresponding to the arrangement 
computed by  Algorithm~\ref{algorithm:arrangementPhiU}.
\begin{lemma}\label{lemma:coefficients}
Let the  guest graph $G = (V, E)$ and the host graph $T$ be binary regular trees of heights $h_G \geq 1$ and $h = h_G + 1$, 
respectively, and  let $\phi_A$ be the arrangement computed by  Algorithm~\ref{algorithm:arrangementPhiU}. 
Then the coefficients  $a_i(\phi_A)$, $1\le i\le h$, are given as follows:
		\begin{equation}
			\label{equation:lemma:partialSumms}
			a_i(\phi_A) = \left\{
				\begin{array}{ll}
					\frac{2}{3} 2^{h_G} - \frac{1}{2} - \frac{1}{6}(-1)^{h_G}
						& \text{for } i = 1\\
					\frac{7}{12} 2^{h_G} + \frac{1}{2} + \frac{1}{6} (-1)^{h_G}
						& \text{for } i = 2\\
					3 \cdot 2^{h_G - i}
						& \text{for } 3 \leq i < h\\
					1
						& \text{for } i = h\\
				\end{array}\right.			
		\end{equation}	
		
		\begin{proof}
Consider first $i = 1$  and determine  the number of edges contributing to the objective value $OV(G, 2, \phi_A)$ by 
exactly $2$. Let us first neglect the pair-exchanges done in pseudocode 
line~\ref{algorithm:arrangementPhiU:hGEqual0ConditionElsePairExchangeStatement}. 			
Then there are only two possibilities how to arrange an edge in Algorithm~\ref{algorithm:arrangementPhiU} 
in such a way that it contributes by $2$ to the objective value 
\begin{enumerate}
\item Either it is taken over from the recursive arrangements in pseudocode 
line~\ref{algorithm:arrangementPhiU:hGEqual0ConditionElseArrangeTheSubproblems}
\item or it is produced by placing the root $v_1$ in pseudocode 
line~\ref{algorithm:arrangementPhiU:hGEqual0ConditionElseArrangeTheRoot}.
\end{enumerate}
In the latter case the following property P must hold: (P) A child of the root $v_1$  and $v_1$ itself 
 are placed to children vertices of a common father in $T$.
 Since the children of the root $v_1$ are roots in the previous recursion step, 
and since the roots are always placed on the middle leaf, $h_G = 1$ has to hold 
 in   the corresponding  recursive run, i.e.\ in the run when property P is fulfilled.  
So   exactly one edge contributing by $2$ to the value of the objective function
 arises in every such recursive run with $h_G=1$
(see also Figures~\ref{figure:guestGraphGForHG1} and \ref{figure:arrangementPhiUForHG1} in Appendix). 
There are $2^{h_G-1}$ such runs,  one for each vertex with level $h_G-1$ (playing the role of the root).  
Thus, if the pair echange step is negtlected,  there are $2^{h_G-1}$ edges which contribute $1$ to the 
value of the objective function. 
			
Let $pe(h_G)$ be the number of pair-exchanges done in pseudocode 
line~\ref{algorithm:arrangementPhiU:hGEqual0ConditionElsePairExchangeStatement} when applying the  algorithm on a guest graph of height
 $h_G$. 
We prove that
\begin{equation}
\label{equation:lemma:coefficients:iEqual1Pe1}
pe(h_G) = \frac{1}{6} 2^{h_G} - \frac{1}{2} - \frac{1}{6} (-1)^{h_G}
\end{equation}
by induction on the height $h_G$. 
For $h_G = 1$ the formula in (\ref{equation:lemma:coefficients:iEqual1Pe1}) yields $pe(h_G) = 0$ which
is obviously correct (see also Figures~\ref{figure:guestGraphGForHG1} and \ref{figure:arrangementPhiUForHG1} in Appendix). 
Analogous arguments as in the proof of Lemma~\ref{lemma:upperBound} show that the following recursive equations hold for $h_G\ge 1$:
\begin{equation}
\label{equation:lemma:coefficients:iEqual1Pe2}
				pe(h_G + 1) = \left\{
					\begin{array}{ll}
						2 pe(h_G) + 1
							& \text{for } h_G + 1 \text{ odd}\\
						2 pe(h_G)
							& \text{for } h_G + 1 \text{ even}\\
					\end{array}\right.			
\end{equation}

So by applying   (\ref{equation:lemma:coefficients:iEqual1Pe1})  and  (\ref{equation:lemma:coefficients:iEqual1Pe2}) we get: 
\[	pe(h_G + 1)= 2 pe(h_G)+1= 2 \left(\frac{1}{6} 2^{h_G} - \frac{1}{2} - \frac{1}{6} (-1)^{h_G}\right) + 1
								 =\]\[ \frac{1}{6} 2^{h_G + 1} - \frac{1}{2} - \frac{1}{6} (-1)^{h_G}\, ,
				\]
if $h_G$ is odd, and 
\[ 
	pe(h_G + 1)= 2 pe(h_G) = 2 \left(\frac{1}{6} 2^{h_G} - \frac{1}{2} - \frac{1}{6} (-1)^{h_G}\right) = \frac{1}{6} 2^{h_G + 1}
 - \frac{1}{2} - \frac{1}{6} (-1)^{h_G} \, ,
\]
if $h_G$ is even. 
Thus also $pe(h_G+1)$ fulfills 
(\ref{equation:lemma:coefficients:iEqual1Pe1}), which completes the inductive proof of (\ref{equation:lemma:coefficients:iEqual1Pe1}).
\smallskip 

In Lemma~\ref{lemma:improvementForEveryPairExchange} it was proven  that every pair-exchange done in pseudocode 
line~\ref{algorithm:arrangementPhiU:hGEqual0ConditionElsePairExchangeStatement} increases by $1$ the number of edges 
contributing by $2$ to the value of the objective function. Thus we get
\[
a_1(\phi_A)= 2^{h_G - 1} + pe(h_G)= \frac{2}{3} 2^{h_G} - \frac{1}{2} - \frac{1}{6}(-1)^{h_G}\, , 	
\]
and hence the claim of the lemma holds for $i=1$.
\medskip
			
Let $i = 2$. We use the same technique: We count vertices  
with $\mbox{level}(v) = h_G - 1$ and the number of vertices  with $\mbox{level}(v) = h_G - 2$ first.  
Edges   which  contribute by $4$ to the value of the objective function arise only  in recursive runs 
where the guest graph has height $1$ or $2$ and is rooted at a vertex with level $h_G-1$ or $h_G-2$, respectively.   In every such recursive run   exactly 
one edge of that kind arises,  see also Figures~\ref{figure:guestGraphGForHG1}, \ref{figure:arrangementPhiUForHG1}, 
\ref{figure:guestGraphGForHG2}  and \ref{figure:arrangementPhiUForHG2}). 
Then we consider the effect of the pair-exchanges done in pseudocode 
line~\ref{algorithm:arrangementPhiU:hGEqual0ConditionElsePairExchangeStatement}. 
According to Lemma~\ref{lemma:improvementForEveryPairExchange} each such pair-exchange reduces by one the number of edges 
which  contribute to the  value of the objective function  by $4$, so we get  
\[a_2(\phi_A)= 2^{h_G - 1} + 2^{h_G - 2} - pe(h_G)=  \frac{7}{12} 2^{h_G} + \frac{1}{2} + \frac{1}{6} (-1)^{h_G}\, , 
\]
and hence the claim of the lemma holds for $i=2$.
\medskip

Consider now the case $i\ge 3$. According to Lemma~\ref{lemma:improvementForEveryPairExchange}	the pair-exchanges done in pseudocode 
line~\ref{algorithm:arrangementPhiU:hGEqual0ConditionElsePairExchangeStatement}	have no effect on the number of edges of $G$
which   contribute to the  value of the objective function  by $2i$, if $i\ge 3$.  
	So for   $3 \leq i < h$  the pair-exchanges  can be neglected and we get 
	\[				a_i(\phi_A) = 2^{h_G - (i - 1)} + 2^{h_G - i}=3 \cdot 2^{h_G - i}\, , 
				\]
in compliance with the claim of the lemma. 
\medskip
			
Finally, for  $i = h$ there is exactly  one edge which contributes  by $2h$ to the  value of the objective function,
 namely the one joining 
the root of $G$ with his right child. 
\end{proof}
\end{lemma}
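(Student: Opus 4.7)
The plan is to argue case by case on the value of $i$, separating the contributions produced by the recursive calls on the two basic subtrees of $G$ from the contributions due to (i) the two edges incident to the root $v_1$ and (ii) the optional pair-exchange from pseudocode line~\ref{algorithm:arrangementPhiU:hGEqual0ConditionElsePairExchangeStatement}. The key quantitative tool is Lemma~\ref{lemma:improvementForEveryPairExchange}, which records precisely how a pair-exchange affects the counts $a_i(\phi_A)$: it increases $a_1$ by $1$, decreases $a_2$ by $1$ and leaves all other $a_i$ unchanged. This reduces the problem to (a) analysing the arrangement \emph{before} the pair-exchange, which behaves purely recursively, and (b) counting the total number of pair-exchanges $pe(h_G)$ executed across all recursive calls.

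First I would settle the two easy cases. For $i=h$, it is immediate from Observation~\ref{observation:distance} that the only edge whose endpoints are arranged on leaves with common ancestor at level $0$ of $T$ is the edge between $v_1$ (placed at $b_{b/2}$) and its right child (placed at $b_{3b/4}$), so $a_h(\phi_A)=1$. For $3\le i<h$, pair-exchanges may be ignored. A straightforward recursion shows that the edges of length $2i$ come in two flavours: the ones inherited from the two recursive arrangements on the basic subtrees, plus the single edge from the root to its left child when $i=h_G=h-1$ (the root-to-right-child edge has already been accounted for at length $2h$). Setting up the recurrence $a_i(\phi_A^{(h_G)})=2a_i(\phi_A^{(h_G-1)})+[i=h_G]$ with base case $a_{h_G}(\phi_A^{(h_G-1)})=1$ and solving by induction on $h_G$ yields the closed form $3\cdot 2^{h_G-i}$.

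For $i=2$ and $i=1$, by Lemma~\ref{lemma:improvementForEveryPairExchange} it suffices to compute $a_2$ and $a_1$ on the arrangement \emph{before} the pair-exchanges are done and then to add or subtract $pe(h_G)$. Ignoring pair-exchanges, edges of length $4$ only arise in recursive runs in which the guest (sub)tree has height $1$ or $2$ (the right-child-of-root edge of such a run has exactly length $4$), and each vertex of level $h_G-1$ or $h_G-2$ of $G$ contributes exactly one such run, giving $2^{h_G-1}+2^{h_G-2}$ length-$4$ edges in total. Analogously, length-$2$ edges (without the pair-exchange) only appear in the height-$1$ recursive runs and there is exactly one per vertex of level $h_G-1$, producing $2^{h_G-1}$ such edges. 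Applying the pair-exchange correction from Lemma~\ref{lemma:improvementForEveryPairExchange} then gives $a_2(\phi_A)=2^{h_G-1}+2^{h_G-2}-pe(h_G)$ and $a_1(\phi_A)=2^{h_G-1}+pe(h_G)$.

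The main obstacle will therefore be establishing the closed-form formula $pe(h_G)=\tfrac{1}{6}2^{h_G}-\tfrac{1}{2}-\tfrac{1}{6}(-1)^{h_G}$. I would prove it by induction on $h_G$ using the recursion $pe(h_G+1)=2pe(h_G)+[h_G+1 \text{ odd}]$, which follows directly from the structure of Algorithm~\ref{algorithm:arrangementPhiU}: every recursive invocation solves two subproblems (doubling the contribution) and performs one additional pair-exchange exactly when the current height is odd and at least $3$. Starting from the base case $pe(1)=0$, a short algebraic check verifies that both parities of $h_G+1$ are consistent with the claimed closed form. Substituting this expression for $pe(h_G)$ into the formulas above and simplifying then yields precisely the stated closed forms for $a_1(\phi_A)$ and $a_2(\phi_A)$, which completes the case analysis.
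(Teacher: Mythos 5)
Your proposal is correct and follows essentially the same route as the paper: count the contributions before the pair-exchanges (which behave purely recursively, giving $2^{h_G-1}$ length-$2$ edges, $2^{h_G-1}+2^{h_G-2}$ length-$4$ edges, and $3\cdot 2^{h_G-i}$ edges of length $2i$ for $3\le i<h$), establish $pe(h_G)=\tfrac{1}{6}2^{h_G}-\tfrac{1}{2}-\tfrac{1}{6}(-1)^{h_G}$ by induction on the recursion $pe(h_G+1)=2pe(h_G)+[h_G+1\text{ odd}]$, and then correct $a_1$ and $a_2$ via Lemma~\ref{lemma:improvementForEveryPairExchange}. Your recurrence formulation $a_i(h_G)=2a_i(h_G-1)+[i=h_G]$ for the middle range is just a repackaging of the paper's direct count of one edge per recursive run of height $i-1$ or $i$ (and note the one length-$4$ edge in a height-$2$ run is the root-to-\emph{left}-child edge, not the right one, though this does not affect your count).
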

\begin{corollary}\label{corollary:partialSumms}
Let the  guest graph $G = (V, E)$ and the host graph $T$ be binary regular trees of heights $h_G \geq 1$ and $h = h_G + 1$, 
respectively, and  let $\phi_A$ be the arrangement computed by  Algorithm~\ref{algorithm:arrangementPhiU}. 
Then the coefficients  $s_i(\phi_A)$, $1\le i\le h$, are given as follows:
		\begin{equation}
			\label{equation:corollary:partialSumms}
			s_i(\phi_A) = \left\{
				\begin{array}{ll}
					2 \cdot 2^{h_G} - 2
						& \text{for } i = 1\\
					\frac{4}{3} 2^{h_G} - \frac{3}{2} + \frac{1}{6} (-1)^{h_G}
						& \text{for } i = 2\\
					6 \cdot 2^{h_G - i} - 2
						& \text{for } 3 \leq i \leq h
				\end{array}\right.			
		\end{equation}	
		
\begin{proof}
This corollary is a straightforward  consequence of the definition of $s_i(\phi)$, $1\le i\le h$,
(see (\ref{equation:partialSums})) and of Lemma~\ref{lemma:coefficients}.

\begin{itemize}
\item{For $i = h$ we get:}  $s_h(\phi_A) = a_h(\phi_A) = 1 = 6 \cdot 2^{h_G - (h_G + 1)} - 2 = 6 \cdot 2^{h_G - i} - 2$.
\smallskip

\item{For $3 \leq i < h$ we get by induction  on $i$ and starting with $i=h$:} 
 $s_i(\phi_A) = a_i(\phi_A) + s_{i + 1}(\phi_A) = 3 \cdot 2^{h_G - i} + 6 \cdot 2^{h_G - (i + 1)} - 2 = 6 \cdot 2^{h_G - i} - 2$.
\smallskip

\item{For $i = 2$ we get:} $s_2(\phi_A) = a_2(\phi_A) + s_3(\phi_A) = \frac{7}{12} 2^{h_G} + \frac{1}{2} + \frac{1}{6} (-1)^{h_G} + 6 \cdot 2^{h_G - 3} - 2 = \frac{4}{3} 2^{h_G} - \frac{3}{2} + \frac{1}{6} (-1)^{h_G}$.
\smallskip

\item{For $i = 1$ we get:} $s_1(\phi_A) = a_1(\phi_A) + s_2(\phi_A) = \frac{2}{3} 2^{h_G} - \frac{1}{2} - \frac{1}{6}(-1)^{h_G} + \frac{4}{3} 2^{h_G} - \frac{3}{2} + \frac{1}{6} (-1)^{h_G} = 2 \cdot 2^{h_G} - 2$.
\end{itemize}
\end{proof}
\end{corollary}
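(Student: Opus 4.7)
The plan is to derive Corollary~\ref{corollary:partialSumms} directly from Lemma~\ref{lemma:coefficients} using the definition $s_i(\phi_A) = \sum_{j=i}^{h} a_j(\phi_A)$, which immediately gives the recurrence $s_i(\phi_A) = a_i(\phi_A) + s_{i+1}(\phi_A)$ for $1 \le i \le h-1$ and the base case $s_h(\phi_A) = a_h(\phi_A)$. This essentially turns the proof into a descending induction on $i$, processing the four cases of the piecewise formula from $i=h$ down to $i=1$.

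First I would handle the base case $i=h$ directly: since Lemma~\ref{lemma:coefficients} gives $a_h(\phi_A) = 1$, and since $h = h_G + 1$, one checks that $1 = 6 \cdot 2^{h_G - (h_G+1)} - 2 = 6 \cdot 2^{h_G - i} - 2$, so the closed form $s_i(\phi_A) = 6 \cdot 2^{h_G - i} - 2$ holds for $i = h$. Next I would treat the range $3 \le i < h$ by descending induction: assuming $s_{i+1}(\phi_A) = 6 \cdot 2^{h_G - (i+1)} - 2$, I would substitute $a_i(\phi_A) = 3 \cdot 2^{h_G - i}$ from Lemma~\ref{lemma:coefficients} into $s_i(\phi_A) = a_i(\phi_A) + s_{i+1}(\phi_A)$ and simplify $3 \cdot 2^{h_G-i} + 3 \cdot 2^{h_G-i} - 2 = 6 \cdot 2^{h_G-i} - 2$.

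For $i = 2$, I would combine $s_3(\phi_A) = 6 \cdot 2^{h_G-3} - 2 = \tfrac{3}{4} \cdot 2^{h_G} - 2$ with the value $a_2(\phi_A) = \tfrac{7}{12} 2^{h_G} + \tfrac{1}{2} + \tfrac{1}{6}(-1)^{h_G}$ from Lemma~\ref{lemma:coefficients}, and the sum simplifies to $\tfrac{4}{3} 2^{h_G} - \tfrac{3}{2} + \tfrac{1}{6}(-1)^{h_G}$ as required. Finally, for $i = 1$ I would add $a_1(\phi_A) = \tfrac{2}{3} 2^{h_G} - \tfrac{1}{2} - \tfrac{1}{6}(-1)^{h_G}$ to the value of $s_2(\phi_A)$ just computed, observing that the terms involving $(-1)^{h_G}$ and the constants $-\tfrac{1}{2}$ and $-\tfrac{3}{2}$ cancel cleanly to yield $2 \cdot 2^{h_G} - 2$.

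Since the whole argument is telescopic and purely algebraic, there is no genuine obstacle; the only care required is to make sure that the case boundary at $i = 3$ is consistent with the case boundary at $i = 2$ (i.e.\ that plugging $i = 3$ into the formula $6 \cdot 2^{h_G-i} - 2$ gives the value that one then uses as $s_3(\phi_A)$ when deriving $s_2(\phi_A)$), and to double-check the sign handling of the $(-1)^{h_G}$ contributions in the transition from $i = 2$ to $i = 1$. Everything else is routine bookkeeping, and the result follows.
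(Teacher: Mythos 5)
Your proposal is correct and follows essentially the same route as the paper: the base case $s_h(\phi_A)=a_h(\phi_A)$, descending induction via $s_i(\phi_A)=a_i(\phi_A)+s_{i+1}(\phi_A)$ for $3\le i<h$, and then the explicit substitutions of the Lemma~\ref{lemma:coefficients} values for $i=2$ and $i=1$. The algebra you indicate (including the cancellation of the $(-1)^{h_G}$ terms at the last step) is exactly what the paper carries out.
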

\smallskip

Next we  give a lower bound for $s_i(\phi)$, where $\phi$ is an arbitrary arrangement of the vertices of the guest graph $G$ with height $h_G$ into the leaves of the host 
graph $T$ with height $h \defeq h_G+1$ and  $i$ is  some integer between $1$ and $h$. 
$s_i(\phi)$ is the number of edges of $G$ which contribute by at least $2i$ to the value of  the objective 
function. Obviously, each such edge  joins vertices of $G$ which are arranged at the leaves of  
{\em different binary regular subtrees} of $T$ of height $i-1$ and rooted at vertices of level $h-(i-1)=h_G-i+2 \eqdef k^{\prime}$. 
Clearly, there are $2^{k^{\prime}}$ such subtrees of $T$.
Let us denote them  by $T^{(i)}_j$, for $1\le j\le 2^{k^{\prime}}$. Let $V^{(i)}_j\subset V(G)$ be the set of vertices of $G$   
 which are arranged at the leaves of $T^{(i)}_j$. Since for all leaves $b$ of $T$ but one there is some vertex $v \in V(G)$ 
whith $\phi(v)=b$,  $|V^{(i)}_j|=2^{i-1}$  holds for all but one index $j$, $1\le j\le 2^{k^{\prime}}$,   and for the exception, 
say $j_0$, $|V^{(i)}_{j_0}|=2^{i-1}-1$ holds. Thus $\mathscr{V}^{(i)} \defeq \{V^{(i)}_j| 1\le j\le 2^{k^{\prime}}\}$ is $k$-balanced partition of 
$G$ with  $k= 2^{k^{\prime}}$, $k^{\prime}=h_G-i+2$,  and $s_i(\phi)=c(G,\mathscr{V}^{(i)})$. Let $\mathscr{V}^*_i$ be the optimal 
$k$-balanced partition of $G$ (which can be  computed by the algorithm presented  in 
Section~\ref{subsection:section:approximationRatio:kBalancedPartitioningProblem:solutionAlgorithmKBPP}
 and for which  lower bounds of the objective function value as in 
Section~\ref{subsection:section:approximationRatio:kBalancedPartitioningProblem:objectiveValueKBPP}
 are known). Then  $s_i(\phi)\ge c(G,\mathscr{V}^*_i)$ holds for all $i$, $2\le i\le h$ and for every arrangement $\phi$. 
Let us  denote  the lower bounds $s_i^{L} \defeq c(G,\mathscr{V}^*_i)$ for $2\le i\le h$,
and $s_1^L \defeq |V(G)|-1=2^h-2$ for $i=1$. 
\medskip

\noindent Thus we get 
	\begin{equation}		\label{equation:lowerBound}
 		OV(G, 2, \phi) \geq 2 \sum_{i = 1}^h{s_i^L} \mbox{ for all arrangements $\phi$.}
	\end{equation}

Notice that $s_1(\phi) =|E(G)|=|V(G)|-1=s_1^L$ holds for every arrangement 
$\phi$ of the  guest graph  $G$.  Notice, moreover,  that for $h_G\le 4$ the bound in 
(\ref{equation:lowerBound}) is tight, in the sense that it matches the optimal value of the 
objective function of $DAPT(G,2)$, as illustrated in the following tables. 

	\begin{table}[H]
		\centering
		\begin{tabular}{l||r|r}
			$i$
				& $2$
					& $1$\\
			\hline\hline
			$s_i(\phi_A)$
				& $1$
					& $2$\\
			\hline
			$s_i^L$
				& $1$
					& $2$\\
		\end{tabular}
		\caption[Partial sums $s_i(\phi_A)$ and the corresponding lower bounds $s_i^L$, where $1 \leq i\leq h$, for a guest graph $G = (V, E)$ of height $h_G = 1$.]{Partial sums $s_i(\phi_A)$ and the corresponding lower bounds $s_i^L$, where $1 \leq i\leq h$, for a guest graph $G = (V, E)$ of height $h_G = 1$.}
		\label{table:example:example:partialSumsAndTheCorrespondingLowerBoundsForHG1}
	\end{table}
	\begin{table}[H]
		\centering
		\begin{tabular}{l||r|r|r}
			$i$
				& $3$
					& $2$
						& $1$\\
			\hline\hline
			$s_i(\phi_A)$
				& $1$
					& $4$
						& $6$\\
			\hline
			$s_i^L$
				& $1$
					& $4$
						& $6$\\
		\end{tabular}
		\caption[Partial sums $s_i(\phi_A)$ and the corresponding lower bounds $s_i^L$, where $1 \leq i\leq h$, for a guest graph $G = (V, E)$ of height $h_G = 2$.]{Partial sums $s_i(\phi_A)$ and the corresponding lower bounds $s_i^L$, where $1 \leq i\leq h$, for a guest graph $G = (V, E)$ of height $h_G = 2$.}
		\label{table:example:example:partialSumsAndTheCorrespondingLowerBoundsForHG2}
	\end{table}
	\begin{table}[H]
		\centering
		\begin{tabular}{l||r|r|r|r}
			$i$
				& $4$
					& $3$
						& $2$
							& $1$\\
			\hline\hline
			$s_i(\phi_A)$
				& $1$
					& $4$
						& $9$
							& $14$\\
			\hline
			$s_i^L$
				& $1$
					& $4$
						& $9$
							& $14$\\
		\end{tabular}
		\caption[Partial sums $s_i(\phi_A)$ and the corresponding lower bounds $s_i^L$, where $1 \leq i\leq h$, for a guest graph $G = (V, E)$ of height $h_G = 3$.]{Partial sums $s_i(\phi_A)$ and the corresponding lower bounds $s_i^L$, where $1 \leq i\leq h$, for a guest graph $G = (V, E)$ of height $h_G = 3$.}
		\label{table:example:example:partialSumsAndTheCorrespondingLowerBoundsForHG3}
	\end{table}
	\begin{table}[H]
		\centering
		\begin{tabular}{l||r|r|r|r|r}
			$i$
				& $5$
					& $4$
						& $3$
							& $2$
								& $1$\\
			\hline\hline
			$s_i(\phi_A)$
				& $1$
					& $4$
						& $10$
							& $20$
								& $30$\\
			\hline
			$s_i^L$
				& $1$
					& $4$
						& $10$
							& $20$
								& $30$\\
		\end{tabular}
		\caption[Partial sums $s_i(\phi_A)$ and the corresponding lower bounds $s_i^L$, where $1 \leq i\leq h$, for a guest graph $G = (V, E)$ of height $h_G = 4$.]{Partial sums $s_i(\phi_A)$ and the corresponding lower bounds $s_i^L$, where $1 \leq i\leq h$, for a guest graph $G = (V, E)$ of height $h_G = 4$.}
		\label{table:example:example:partialSumsAndTheCorrespondingLowerBoundsForHG4}
	\end{table}

In general, the  lower bound in (\ref{equation:lowerBound}) is  not tight. Already for $h_G=5$ 
there is a gap between the value of the objective function corresponding
 to the  arrangement $\phi_A$ generated by the  algorithm ${\cal A}$ and the lower bound,   
as shown  in  Table~\ref{table:example:example:theLowerBoundIsNotTight} below. 
Moreover, in  the following example  we prove that  $\phi_A$ is an optimal arrangement for $h_G=5$.
Thus
we  conclude that the lower bound in (\ref{equation:lowerBound}) does not match the optimal value of 
the objective function of $DAPT(G,2)$ already for $h_G=5$. 
\begin{example}
	\label{example:theLowerBoundIsNotTight}
	Consider the guest graph $G = (V, E)$ to be a  complete binary tree of height $h_G = 5$ 
ordered according to the canonical ordering. The  partial sums $s_i(\phi_A)$ and the 
 lower bounds $s_i^L$,  for 
$1 \leq i \leq h$, computed according to Corollary~\ref{corollary:partialSumms}, 
inequality (\ref{equation:lowerBound}) and equation (\ref{objfuncval:kBPP}), are given in  
Table~\ref{table:example:example:theLowerBoundIsNotTight} below.  
	\begin{table}[htb!]
		\centering
		\begin{tabular}{l||r|r|r|r|r|r}
			$i$
				& $6$
					& $5$
						& $4$
							& $3$
								& $2$
									& $1$\\
			\hline\hline
			$s_i(\phi_A)$
				& $1$
					& $4$
						& $10$
							& $22$
								& $41$
									& $62$\\
			\hline
			$s_i^L$
				& $1$
					& $4$
						& $10$
							& $21$
								& $41$
									& $62$\\
		\end{tabular}
		\caption[Partial sums $s_i(\phi_A)$ and the corresponding lower bounds $s_i^L$ for $1 \leq i\leq h$.]{Partial sums $s_i(\phi_A)$ and the corresponding lower bounds $s_i^L$ for $1 \leq i\leq h$.}
		\label{table:example:example:theLowerBoundIsNotTight}
	\end{table}

Thus in the  case $h_G=5$  the bound of inequality~(\ref{equation:lowerBound}) does not match the objective 
function value corresponding to $\phi_A$ because $s_3(\phi_a)>s_3^L$.
Now a natural question arises:

Question: Does  the bound in (\ref{equation:lowerBound}) match the optimal value of the objective 
function  
of the $DAPT(G,2)$ if $h_G=5$?

We show that $\phi_A$ is an optimal 
arrangement if $h_G=5$, and hence  the answer to the above  question is ``no''.
Indeed, assume that $\phi_A$ is not optimal and let $\phi_{\ast}$ be an 
optimal arrangement. Observe that $s_6(\phi_{\ast})=1$ has to hold because $s_6(\phi_{\ast})\ge 2$  
leads to a contradiction as follows:
\[ OV(G,2,\phi_{\ast})- OV(G,2,\phi_A) =2\sum_{i=1}^6  \big (s_i(\phi_{\ast})-s_i(\phi_{A})\big )=\]
\[2\sum_{i=1}^5  \big (s_i(\phi_{\ast})-s_i(\phi_A)\big )+ 2\big (s_6(\phi_{\ast})-s_6(\phi_A)\big )
>
2\sum_{i=1}^5  (s_i^L-s_i(\phi_{A}))+2 =-2+2= 0\, .\] 
 By similar arguments we would get   $s_i(\phi_{\ast})=s_i^L=s_i(\phi_A)$ for $i\in\{1,2,4,5\}$.
Since $s_6(\phi_{\ast})=1$ there will only be one edge of $G$ such that its endpoints are 
mapped to leaves of the right and the left basic subtrees of $T$, respectively. 
Clearly this edge can only be $(v_1,v_2)$ or $(v_1,v_3)$. 
Assume w.l.o.g.\ that this edge is $(v_1,v_3)$ and that $\phi_{\ast}$ arranges the right basic subtree of $G$ (of height $4$) at 
the leaves of the right basic subtree $T^r$ of $T$. 
Since algorithm ${\cal A}$ yields an optimal arrangement for $h_G\le 4$, we can than assume w.l.o.g.\ that $\phi_{\ast}$ and 
$\phi_A$ arrange the right basic subtree of $G$ in the same way. 
Now consider the left basic subtree of $G$ together with the root $v_1$ and denote this subgraph of $G$ by $G_1$.  
$\phi_{\ast}$
 arranges $G_1$ at the leaves of the left basic subtree $T^{l}$ of $T$.  
Let $G_1^{(a)}$ and $G_1^{(b)}$ the two subgraphs of $G_1$  arranged by $\phi_{\ast}$ at the leaves of the left and the 
right basic subtrees of $T^l$, denoted by $T^{ll}$ and $T^{lr}$, respectively.   Since the number of edges 
which contribute by at least $2\cdot 5$ to the value  $OV(G,2,\phi_{\ast})$ is $s_5(\phi_{\ast})=4$, 
 there are just two edges $e_i$  of $G_1$ such that $\phi_{\ast}$ arranges one endpoint of $e_i$ to some leaf of 
$T^{ll}$ and
the other endpoint of $e_i$ to some leaf of   $T^{lr}$,  for $i=1,2$. Recalling that  $|V(G_1^{(a)})|= |V(G_1^{(b)})|$ 
we can easily convince ourselves (in the worst case by using complete enumeration) that one of the edges  $e_i$, $i=1,2$, has to coincide with one of the two edges   $(v_1,v_2)$ or $(v_2,v_5)$ (or $(v_2,v_4)$, symmetrically).
We obtain two cases. (A) If $e_1=(v_1,v_2)$, 
then  $e_2=(v_2,v_5)$ must hold. (B) Otherwise, if $e_1=(v_2,v_5)$ and $e_2\neq (v_1,v_2)$, then $e_2$ 
has 
to join some  leave of the left basic subtree of $G_1\setminus \{v_1\}$ to its father,  e.g.\  
$e_2=(v_{19},v_{39})$. The edges $e_i$, $i=1,2$, fully determine the corresponding subgraphs 
$G_1^{(a)}$ and $G_1^{(b)}$,  each of them having $16$ vertices. For every realisation of $e_i$, $i=1,2$,
the problems $DAPT(G_1^{(a)},2)$ and   $DAPT(G_1^{(b)},2)$ can be solved  by complete enumeration to  observe 
that the correpsonding optimal values coincide with the values of the objective function 
corresponding to the arrangement of the respective subgraphs according to  $\phi_A$. 
Notice that it is enough to do 
the complete enumeration for the DAPTs resulting in the case A and for the DAPTs resulting 
in the case B  with   $e_2=(v_{19},v_{39})$; all other possible  realisations of $e_2$ in case B lead to 
$G_1^{(a)}$, $G_1^{(b)}$  which are isomorphic to    $G_1^{(a)}$, $G_1^{(b)}$ obtained for $e_2=(v_{19},v_{39})$, respectively.  
\smallskip

Notice finally, that the  answer ``no'' to the question posed above   is not surprising.
In general  a collection  of optimal $2^{k}$-balanced partitions,  $1\le k\le h_G$, 
of a  binary tree $G$ of height $h_G$, does not need to coincide with the $2^{k}$-balanced 
partitions of  $G$ 
defined in accordance with some feasible solution  of the data arrangement problem in $G$. 
The reason is that the collection of $2^{k}$-balanced partitions, $1\le k\le h_G$,   
which arises in accordance with some  arrangement $\phi$, is laminar,  meaning  that the 
partition 
sets of the $2^{k}$-balanced partition are obtained as particular partitions     of the 
partition sets of 
the $2^{k-1}$-balanced partition, for $2\le k \le h_G$. More concretely, 
 if $\mathscr{V} = \Big \{V_1^{(1)}, V_1^{(2)}\Big \}$ is the  $2$-balanced partition in the  laminar  
collection of balanced partitions, then the $4$-balanced partition is obtained by partitioning 
 $V_1^{(1)}$ and $V_1^{(2)}$ into  $2$-balanced partitions each, and so on, until the partition sets 
are pairs of vertices,  and hence  a $2^{h_G}$-balanced partition results. 
On the other side, in general,  a collection  of optimal $2^{k}$-balanced partitions  
 of a complete binary tree $G$ of height $h_G$, for $1\le k\le h_G$, 
is not necessarily laminar. 
\end{example}
\medskip

Now we can state the main result of this paper.
\begin{theorem}\label{theorem:approximationRatio}
	Let the guest graph  $G = (V, E)$ and the host graph  $T$ be regular binary  trees  of heights $h_G \geq 1$ and $h = h_G + 1$,  
respectively. Then Algorithm~\ref{algorithm:arrangementPhiU} is a $\frac{203}{200}$-approximation algorithm.
\begin{proof}
The cases $h_G= 0$, $h_G=1$, $h_G=2$ are obvious: the arrangements $\phi_A$ obtained form Algorithm~\ref{algorithm:arrangementPhiU} 
are optimal in these cases, respectively, as one can convince  himself by simple arguments or by  full enumeration (see also 
Figures~\ref{figure:guestGraphGForHG0} to \ref{figure:arrangementPhiUForHG2}  in Appendix).
\smallskip
			
Let $h_G \geq 3$. 
Consider the difference $OV(G, 2, \phi)- 2\sum_{i=1}^h s_i^L = 2 \sum_{i = 1}^h{s_i(\phi_A)}- 2\sum_{i=1}^h s_i^L$, and set 
\[
				D \defeq \sum_{i = 1}^h{\left(s_i(\phi_A) - s_i^L\right)}\, .
\]

As mentioned above  $s_h^L = 1 = s_h(\phi_A)$.
\smallskip

For  $s_2^L = c(G, \mathscr{V}_2^*)$ we have $k^\prime = h_G - 2 + 2 = h_G$ and 
$k = 2^{k^\prime} = 2^{h_G}$, and thus we can apply Lemma~\ref{lemma:objectiveValueOfAnOptimalKBalancedPartitionLowerBound} and 
Corollary~\ref{corollary:partialSumms} to obtain  $s_2^L = \frac{4}{3} 2^{h_G} - \frac{3}{2} + \frac{1}{6} (-1)^{h_G} = s_2(\phi_A)$.
\smallskip
			
Let us consider $s_i^L = c(G, \mathscr{V}_i^*)$, where $3 \leq i \leq h_G$. 
If $1 \leq k^\prime \leq \left\lfloor\frac{h_G}{2}\right\rfloor + 1$,  with  $k^\prime = h_G - i + 2$
 we obtain the condition 
$i \geq h_G - \left\lfloor\frac{h_G}{2}\right\rfloor + 1$. 
For $h_G\ge 3$ the inequality $h_G - \left\lfloor\frac{h_G}{2}\right\rfloor + 1\ge 3$ holds and  hence 
Corollary~\ref{corollary:partialSumms} applies. So by applying Lemma~\ref{lemma:objectiveValueOfAnOptimalKBalancedPartitionLowerBound} we get  $s_i^L = \frac{3}{2} 2^{h_G - i + 2} - 2 = 6 \cdot 2^{h_G - i} - 2 = s_i(\phi_A)$ if 
 $i \geq h_G - \left\lfloor\frac{h_G}{2}\right\rfloor + 1$.
\smallskip
			
The remaining indices are   $3 \leq i \leq h_G - \left\lfloor\frac{h_G}{2}\right\rfloor$. 
Apply  Corollary~\ref{corollary:partialSumms} and Lemma~\ref{lemma:objectiveValueOfAnOptimalKBalancedPartitionLowerBound} to obtain:
\[D\leq \sum_{i = 3}^{h_G - \left\lfloor\frac{h_G}{2}\right\rfloor}{\left(6 \cdot 2^{h_G - i} - \frac{10}{7}2^{h_G - i + 2}\right)}
						 = \frac{2}{7} 2^{h_G} \sum_{i = 3}^{h_G - \left\lfloor\frac{h_G}{2}\right\rfloor}{2^{-i}}\, .\]
The sum of the above  geometric progression is   
$\sum_{i = 3}^{h_G - \left\lfloor\frac{h_G}{2}\right\rfloor}{2^{-i}} = \frac{1}{4} - \frac{1}{2} 2^{h_G-\lceil \frac{h_G}{2}\rceil}\le \frac{1}{4} - \frac{\sqrt{2}}{2} 2^{-\frac{h_G}{2}}$. 
Summarizing we get 			
\[D \leq \frac{2}{7} 2^{h_G} \left(\frac{1}{4} - \frac{\sqrt{2}}{2} 2^{-\frac{h_G}{2}}\right)
 = \frac{1}{14} 2^{h_G} - \frac{\sqrt{2}}{7} 2^{-\frac{h_G}{2}}\, .\]
\medskip
			
By applying Lemma~\ref{lemma:upperBound} we get the following  approximation ratio $\rho$ 
\[\rho(h_G) \defeq \frac{OV(G, 2, \phi_A)}{OV(G, 2, \phi_A) - 2 D}
=\] \[ \frac{\frac{29}{3} \cdot 2^{h_G} - 4 h_G - 9 + \frac{1}{3} (-1)^{h_G}}{\left(\frac{29}{3} \cdot 2^{h_G} - 4 h_G - 9 + 
\frac{1}{3} (-1)^{h_G}\right) - 2 \left(\frac{1}{14} 2^{h_G} - \frac{\sqrt{2}}{7} 2^{-\frac{h_G}{2}}\right)} \, . \]
			After some standard  algebraic transofrmations  we obtain
\[	\rho(h_G) \defeq \frac{\frac{29}{3} 2^{h_G} - 4 h_G - \frac{26}{3}}{\frac{200}{21} 2^{h_G} - 4 h_G + \frac{2 \sqrt{2}}{7} 2^{\frac{h_G}{2}} - 
\frac{28}{3}}\, .\]
			
Finally, it is not difficult to verify that $\rho(h_G)$ is a strictly monotone increasing sequence (for $h_G \geq 4$) and since
		\[			\rho \defeq \lim_{h_G \to +\infty} \rho(h_G) = \lim_{h_G \to \infty} \frac{\frac{29}{3} 2^{h_G} - 4 h_G - \frac{26}{3}}{\frac{200}{21} 2^{h_G} - 4 h_G + \frac{2 \sqrt{2}}{7} 2^{\frac{h_G}{2}} - \frac{28}{3}} = \frac{203}{200} = 1.015, \]
			this completes the proof.
\end{proof}
\end{theorem}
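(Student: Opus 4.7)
The plan is to combine the closed form for $OV(G,2,\phi_A)$ (Lemma \ref{lemma:upperBound}) with a lower bound on the optimum expressed as a sum of $k$-BPP optima. Concretely, using the identity $OV(G,2,\phi)=2\sum_{i=1}^{h}s_i(\phi)$ in (\ref{equation:objectiveValueByUsingTheCoefficientsS1S2LdotsSH}) and the fact (derived above the theorem) that every arrangement $\phi$ induces a $2^{h_G-i+2}$-balanced partition whose cut equals $s_i(\phi)$, we get $OV(G,2,\phi_{\ast})\ge 2\sum_{i=1}^{h}s_i^L$ where $s_i^L=c(G,\mathscr{V}_i^{\ast})$ for $2\le i\le h$ and $s_1^L=n-1$. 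The approximation ratio can then be estimated as
\[
\rho(h_G)\;=\;\frac{OV(G,2,\phi_A)}{2\sum_{i=1}^h s_i^L}\;\le\;\frac{OV(G,2,\phi_A)}{OV(G,2,\phi_A)-2D},\qquad D\defeq\sum_{i=1}^{h}\bigl(s_i(\phi_A)-s_i^L\bigr).
\]
The first step is to dispatch the small cases $h_G\in\{0,1,2\}$ by direct inspection (here $\phi_A$ is optimal), so we may assume $h_G\ge 3$.

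Next I would show that $D$ collects contributions only from a narrow band of indices. For $i=1$ we have equality $s_1^L=s_1(\phi_A)=n-1$, since every edge crosses at distance at least $2$. For $i=h$ Corollary \ref{corollary:partialSumms} gives $s_h(\phi_A)=1$ and this clearly matches $s_h^L$. For $i=2$ the corresponding $k$-BPP has $k'=h_G$, so the third clause of Lemma \ref{lemma:objectiveValueOfAnOptimalKBalancedPartitionLowerBound} yields $s_2^L=\tfrac{4}{3}2^{h_G}-\tfrac{3}{2}+\tfrac{1}{6}(-1)^{h_G}$, which coincides with $s_2(\phi_A)$ by Corollary \ref{corollary:partialSumms}. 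For $3\le i\le h_G$, the value $k'=h_G-i+2$ satisfies $k'\le\lfloor h_G/2\rfloor+1$ precisely when $i\ge h_G-\lfloor h_G/2\rfloor+1$; in that range the third clause of Lemma \ref{lemma:objectiveValueOfAnOptimalKBalancedPartitionLowerBound} gives $s_i^L=6\cdot 2^{h_G-i}-2=s_i(\phi_A)$ again. Thus $D$ is supported only on $3\le i\le h_G-\lfloor h_G/2\rfloor$, where the only available bound is the weaker $c(G,\mathscr{V}^*)\ge \tfrac{10}{7}k-2$.

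On that residual range I would bound each term by $s_i(\phi_A)-s_i^L\le 6\cdot 2^{h_G-i}-\tfrac{10}{7}\cdot 2^{h_G-i+2}=\tfrac{2}{7}\cdot 2^{h_G-i}$ and sum the resulting geometric progression, obtaining $D\le\tfrac{2}{7}\cdot 2^{h_G}\bigl(\tfrac14-\tfrac12\cdot 2^{-\lceil h_G/2\rceil}\bigr)\le \tfrac{1}{14}\cdot 2^{h_G}-\tfrac{\sqrt{2}}{7}\cdot 2^{-h_G/2}$. Plugging this together with the explicit value of $OV(G,2,\phi_A)$ from Lemma \ref{lemma:upperBound} into $\rho(h_G)$ and simplifying yields the quotient displayed in the theorem statement. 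Finally I would observe that $\rho(h_G)$ is monotone increasing in $h_G$ (for $h_G\ge 4$) and compute $\lim_{h_G\to\infty}\rho(h_G)=\tfrac{203}{200}$, which bounds $\rho(h_G)$ uniformly.

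The main obstacle is a bookkeeping one: correctly aligning the $k$-BPP parameter $k'=h_G-i+2$ with the three regimes in Lemma \ref{lemma:objectiveValueOfAnOptimalKBalancedPartitionLowerBound} so that the ``bad'' residual range of indices truly is $3\le i\le h_G-\lfloor h_G/2\rfloor$ (in particular, checking the boundary $i=h_G-\lfloor h_G/2\rfloor+1$ for both parities of $h_G$ and verifying that in this range the weaker $\tfrac{10}{7}k-2$ estimate indeed applies, i.e.\ that $k'\le h_G-1$ holds). Once this partition of indices is settled, the remainder is routine geometric-series arithmetic and a monotonicity check on $\rho(h_G)$.
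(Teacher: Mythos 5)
Your proposal follows essentially the same route as the paper's proof: the same definition of $D$, the same verification that $s_i^L=s_i(\phi_A)$ for $i\in\{1,2,h\}$ and for $i\ge h_G-\lfloor h_G/2\rfloor+1$ via Lemma~\ref{lemma:objectiveValueOfAnOptimalKBalancedPartitionLowerBound} and Corollary~\ref{corollary:partialSumms}, the same geometric-series bound on the residual range $3\le i\le h_G-\lfloor h_G/2\rfloor$, and the same monotonicity-plus-limit argument giving $\frac{203}{200}$. The only discrepancy is a harmless constant slip in the intermediate closed form of the geometric sum (it equals $\frac14-2^{-\lceil h_G/2\rceil}$), which does not affect the final bound on $D$ and mirrors a typo already present in the paper.
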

\section{The complexity of the DAPT  with a tree as   a guest graph} \label{section:complexity}
In this section we show that the   DAPT where the guest graph $G$ is a tree on $n$ vertices and the host graph $T$ is a  complete 
$d$-regular tree of height $\lceil \log_d n \rceil$, for some fixed $d\in \nz$, $d\ge 2$,   is ${\cal NP}$-hard.
This result also settles  a more general open question posed by {\sc Luczak} and 
{\sc Noble}~\cite{LuczakNoble:OptimalArrangementOfDataInATreeDirectory} about the complexity of the GEP when both input
graphs are trees. 
\smallskip

First let us state a  simple result on the optimal value of the objective function of the $DAPT(G,d)$ in the case where $G$ is a start graph; this result was proven in  in  {\sc \c{C}ela} and 
{\sc Stan\v{e}k}~\cite{CelaStanek:HeuristicsForTheDataArrangementProblemOnRegularTrees}.
 	\begin{lemma}	\label{lemma:gIsAStar}
		Let $G = (V, E)$ be a star graph (i.e.\ a complete bipartite graph with $1$ vertex in
 one side of the partition and the rest of the vertices in the other side)  with $n$ vertices and the central vertex  $v_1$. 
 Let  the host graph $T$ be  a complete $d$-regular tree of height $h = \lceil\log_d{n}\rceil$, with  $d \in \nz$, $2 \leq d \leq n$. 
Then the optimal value of the objective function $OPT(G,d)$ is given by
		\begin{equation}
					OPT(G,d)= 2 \left(h \; n - \frac{d^h - 1}{d - 1}\right)\, .
		\end{equation}
Moreover, an arrangement is optimal if and only if  it  arranges the central vertex $v_1$ together with other $d^{h-1}-1$ arbitrarily selected  vertices 
of $G$ at the leaves of
some (arbitrarily selected) basic subtree of $T$ (and the other vertices arbitrarily).     
		\begin{proof}
			 See {\sc \c{C}ela} and {\sc Stan\v{e}k}~\cite{CelaStanek:HeuristicsForTheDataArrangementProblemOnRegularTrees}.
		\end{proof}
	\end{lemma}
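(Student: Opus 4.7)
Since $G$ is a star with centre $v_1$, every edge of $G$ is of the form $(v_1,v)$ and hence
\begin{equation*}
OV(G,d,\phi)=\sum_{v\in V\setminus\{v_1\}}d_T\bigl(\phi(v_1),\phi(v)\bigr)\, .
\end{equation*}
Thus the DAPT in this case reduces to the following one-point selection problem: pick a leaf $\ell^{\ast}\in B$ to hold $v_1$ and a subset $B'\subseteq B\setminus\{\ell^{\ast}\}$ with $|B'|=n-1$ to hold the remaining vertices of $G$, so as to minimise $\sum_{b\in B'}d_T(\ell^{\ast},b)$. By the full symmetry of the host tree $T$, the minimum does not depend on the particular $\ell^{\ast}$, so I would fix $\ell^{\ast}$ arbitrarily and concentrate on the choice of $B'$.

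I would next classify leaves by their distance from $\ell^{\ast}$: Observation~\ref{observation:distance} (or a direct counting argument) implies that for each $l\in\{1,\ldots,h\}$ there are exactly $d^l-d^{l-1}=d^{l-1}(d-1)$ leaves at distance $2l$ from $\ell^{\ast}$, and the leaves at distance at most $2l$ are precisely the $d^l$ leaves of the $(h-l)$-th order subtree of $T$ containing $\ell^{\ast}$. Since these distances are strictly increasing in $l$, the sum is minimised by greedily selecting the closest available leaves. The defining condition $h=\lceil\log_d n\rceil$ gives $d^{h-1}<n\le d^h$, so an optimal $B'$ consists of all $d^{h-1}-1$ leaves of the basic subtree containing $\ell^{\ast}$ (those at distance at most $2(h-1)$, except $\ell^{\ast}$ itself) together with any $n-d^{h-1}$ of the remaining leaves of $T$, each of which lies at distance exactly $2h$. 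Substituting yields
\begin{equation*}
OPT(G,d)=\sum_{l=1}^{h-1}2l\cdot d^{l-1}(d-1)+2h\,(n-d^{h-1}),
\end{equation*}
and the claimed closed form $2\bigl(hn-\tfrac{d^h-1}{d-1}\bigr)$ follows after a short telescoping simplification.

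For the ``if and only if'' characterisation, the ``if'' direction is immediate from the computation above, since any arrangement filling the leaves of some basic subtree with $v_1$ plus $d^{h-1}-1$ other vertices of $G$ realises exactly this distribution of distances. For the ``only if'' direction I would use a simple exchange argument: suppose $\phi$ is optimal, let $B^{\ast}$ denote the basic subtree of $T$ containing $\phi(v_1)$, and assume that some leaf of $B^{\ast}$ is not in $\phi(V)$ while some leaf outside $B^{\ast}$ is; then reassigning one non-central vertex of $G$ from an ``outer'' to an ``inner'' placement strictly decreases the objective, replacing a contribution of $2h$ by one of at most $2(h-1)$, contradicting optimality. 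Interchangeability of the non-central vertices of the star then completes the characterisation. The only mildly technical step is the telescoping simplification—obtained by rewriting $2l\cdot d^{l-1}(d-1)=2l(d^l-d^{l-1})$, reindexing the $d^{l-1}$-part with $j=l-1$, and collecting the resulting cancellations into $2(h-1)d^{h-1}-2\tfrac{d^{h-1}-1}{d-1}$—so this is where I expect the main bookkeeping effort to lie.
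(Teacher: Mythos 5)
Your argument is correct and complete. Note that the paper itself offers no proof of this lemma---it simply cites \c{C}ela and Stan\v{e}k~\cite{CelaStanek:HeuristicsForTheDataArrangementProblemOnRegularTrees}---so there is nothing in the present text to compare against; your reduction to a one-point leaf-selection problem, the count of $d^{l-1}(d-1)$ leaves at distance $2l$, the greedy/exchange argument (which works because $d^{h-1}\le n-1$ forces at least one non-central vertex outside any basic subtree with a free leaf), and the telescoping to $2\bigl(hn-\tfrac{d^h-1}{d-1}\bigr)$ are exactly the standard self-contained proof one would expect to find in the cited reference.
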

\smallskip

Next we will consider another very special case where the guest graph $G$ is the disjoint union of three star graphs. 	
\begin{lemma}\label{lemma:gConsistsOfThreeStars}
Let the guest graph $G$ be the disjoint union of three star graphs  $S_i$, $1\le i\le 3$, i.e.\ 
$V(G)=\mathop{\dot{\bigcup}}_{i=1}^3 V(S_i)$ and $E(G)=\mathop{\dot{\bigcup}}_{i=1}^3 E(S_i)$.  
Assume that $|V(S_i)| \eqdef n_i$, $1\le i\le 3$,  
and $n_1 \geq n_2 \geq n_3$, where $V(S_i)$ is the vertex set of $S_i$, $1\le i\le 3$.
 Assume that  $n \defeq \big|V(G)\big|=n_1+n_2+n_3$ is  a power of $d$ for some fixed $d\in \nz$, $d\ge 2$, 
and   $n_1 \geq \frac{n}{d}$. Let  the host graph $T$ be   a complete $d$-regular tree of height 
$h = \log_d{n}$. Then the optimal value of the objective function  $OPT(G,d)$ is given by  
%
%
\begin{equation}
\label{equation:lemma:gConsistsOfThreeStars}
 OPT(G,d)=2 \left(h_1 n_1 - \frac{d^{h_1} - 1}{d - 1}\right) + 2 \left(h_2 n_2 - \frac{d^{h_2} - 1}{d - 1}\right) + 
2 \left(h_3 n_3 - \frac{d^{h_3} - 1}{d - 1}\right)\, ,
\end{equation}
where $h_i = \lceil\log_d{n_i}\rceil$, $i\in \{1,2,3\}$.		
\begin{proof}
Let $B = \{b_1, b_2, \ldots, b_n\}$ be the set of the  leaves of $T$ labelled according to the canonical order.
 Arrange the central vertex of $S_1$ together with other $2^{h-1}-1$ vertices of $S_1$ at the leaves 
$b_1,b_2,\ldots,b_{d^{h-1}}$ of the  leftmost basic subtree of  $T$. 
The other vertices of $S_1$ will be arranged later at other  appropriately chosen leaves of $T$.  Notice, however, that
 independently on the arrangement of these vertices 
the contribution of the   edges of $S_1$ to the objective function value of 
$DAPT(G,d)$ will be equal to  $2 \left(h \; n_1 - \frac{d^h - 1}{d - 1}\right)$, according to Lemma~\ref{lemma:gIsAStar}.

Arrange the vertices of $S_2$ to the $n_2$ leaves $b_{n-n_2+1},\ldots,b_n$ of $T$  with the largest indices. 
According to Lemma~\ref{lemma:gIsAStar} the  edges of $S_2$ will then  contribute by  
$2 \left(h_2\; n_2 - \frac{d^{h_2} - 1}{d - 1}\right)$  to the objective function value of 
$DAPT(G,d)$. Next, arrange the vertices of $S_3$ to the $n_3$ leaves $b_{d^{h-1}+1},\ldots,
 b_{n_1+n_3}$ of $T$ 
(which are still free because $n_1+n_2+n_3=d^h$ and only the leaves with the   $d^{h-1}\le n_1$ smallest indices as well 
as the leaves with the $n_3$ largest indices have been occupied already).  
According to  Lemma~\ref{lemma:gIsAStar} the  edges of $S_2$ will
then  contribute by  $2 \left(h_3\; n_3 - \frac{d^{h_3} - 1}{d - 1}\right)$
 to the objective function value of  $DAPT(G,d)$. 
Finally arrange the $n_1-d^{h-1}$ vertices of $S_1$ not arranged yet to the remaining $d^h-n_2-n_3-d^{h-1}=n_1-d^{h-1}$  leaves. 
Summarizing,  this arrangement yields an objective function value equal to the expression in 
(\ref{equation:lemma:gConsistsOfThreeStars}), and is therefore optimal because the following inequality  
\begin{eqnarray*}
 OV(G, d, \phi) & =&  \sum_{(u, v) \in E(G)} d_T\big(\phi(u), \phi(v)\big)=  \sum_{i=1}^3 \sum_{(u, v) \in E(S_i)} d_T\big(\phi(u), \phi(v)\big)\ge
\\
&&  2 \left(h_1 n_1 - \frac{d^{h_1} - 1}{d - 1}\right) + 2 \left(h_2 n_2 - \frac{d^{h_2} - 1}{d - 1}\right) + 
2 \left(h_3 n_3 - \frac{d^{h_3} - 1}{d - 1}\right )
\end{eqnarray*}

\noindent  holds for any arrangement $\phi$ of $DAPT(G,d)$ due to Lemmma~\ref{lemma:gIsAStar}.
\end{proof}
\end{lemma}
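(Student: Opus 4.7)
The plan is to decompose the objective along the three stars and bound each summand independently. Since $G$ is a disjoint union $S_1 \dot\cup S_2 \dot\cup S_3$ (as edge sets as well as vertex sets), any arrangement $\phi$ satisfies
\[
OV(G, d, \phi) = \sum_{i=1}^{3}\sum_{(u,v)\in E(S_i)} d_T\big(\phi(u), \phi(v)\big).
\]
To obtain the lower bound, I would argue that for every $i$ the contribution of $S_i$ is at least $2\big(h_i n_i - (d^{h_i}-1)/(d-1)\big)$. This is a ``packing'' variant of Lemma~\ref{lemma:gIsAStar}: however the $n_i$ vertices of $S_i$ are distributed among the leaves of $T$, the sum of distances from $\phi(c_i)$ (the image of the center) to the images of the other $n_i-1$ vertices is minimised when these $n_i$ leaves fit inside some complete $d$-regular subtree of $T$ of height exactly $h_i=\lceil\log_d n_i\rceil$ and the center is placed inside one of that subtree's basic subtrees (the situation to which Lemma~\ref{lemma:gIsAStar} directly applies). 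Summing over $i\in\{1,2,3\}$ yields the claimed lower bound for $OPT(G,d)$.

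To match this lower bound I would exhibit an explicit arrangement $\phi^*$. The hypothesis $n_1\geq n/d=d^{h-1}$ splits into two subcases. If $n_1=d^{h-1}$ then $h_1=h-1$ and I would place all of $S_1$ inside the leftmost basic subtree of $T$ (of height $h-1$ with exactly $d^{h-1}$ leaves); Lemma~\ref{lemma:gIsAStar} applied to this subtree gives contribution $2\big(h_1 n_1 - (d^{h_1}-1)/(d-1)\big)$. If $n_1>d^{h-1}$ then $h_1=h$, and I would place the center $c_1$ together with $d^{h-1}-1$ further vertices of $S_1$ at the leaves of the leftmost basic subtree, filling arbitrary free leaves with the remaining $n_1-d^{h-1}$ vertices of $S_1$; by Lemma~\ref{lemma:gIsAStar} the contribution of $S_1$ equals $2\big(h\, n_1 - (d^h-1)/(d-1)\big)$, which is again $2\big(h_1 n_1 - (d^{h_1}-1)/(d-1)\big)$. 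In either subcase I would then place $S_2$ at the $n_2$ rightmost leaves $b_{n-n_2+1},\dots,b_n$, which by the canonical ordering sit inside a single complete $d$-regular subtree of $T$ of height $h_2$ (since $n_2\leq d^{h_2}$), and place $S_3$ at an analogous block of $n_3$ leaves contained in a subtree of height $h_3$ not yet used by $S_2$; choosing $\phi^*(c_2)$ and $\phi^*(c_3)$ inside their respective subtrees as prescribed by Lemma~\ref{lemma:gIsAStar} yields contributions $2\big(h_i n_i - (d^{h_i}-1)/(d-1)\big)$ for $i=2,3$.

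The main obstacle will be the bookkeeping in the construction: verifying that the rightmost $n_2$ leaves indeed lie inside one complete $d$-regular subtree of height exactly $h_2$ (and symmetrically for the block assigned to $S_3$), that the three chosen subtrees are pairwise leaf-disjoint from the ``packed'' portion of $S_1$, and that the remaining free leaves suffice for the non-packed vertices of $S_1$ without contributing to any edge of $S_2$ or $S_3$. It is precisely the hypothesis $n_1\geq d^{h-1}$ that makes this disjointness possible, because it forces $n_2+n_3\leq (d-1)d^{h-1}$, leaving enough room outside the packed portion of $S_1$ for independently optimal placements of $S_2$ and $S_3$. Once the feasibility of the construction is verified, summing the three contributions reproduces the right-hand side of (\ref{equation:lemma:gConsistsOfThreeStars}), which together with the lower bound established in the first step yields the claimed equality.
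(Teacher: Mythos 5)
Your proposal follows essentially the same route as the paper's own proof: decompose the objective over the three edge-disjoint stars, apply the per-star lower bound from Lemma~\ref{lemma:gIsAStar} to every arrangement, and match it with the explicit construction that packs the centre of $S_1$ (plus $d^{h-1}-1$ further $S_1$-vertices) into the leftmost basic subtree, places $S_2$ at the rightmost $n_2$ leaves and $S_3$ at an aligned block immediately after the leftmost basic subtree, with the leftover $S_1$-vertices filling the remaining leaves. Your explicit case split on $n_1=d^{h-1}$ versus $n_1>d^{h-1}$ is a harmless refinement (the two expressions coincide since $\frac{d^{h}-1}{d-1}-\frac{d^{h-1}-1}{d-1}=d^{h-1}$), so the argument is correct and matches the paper.
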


Next we state the main result of this section. 
\begin{theorem}
The DAPT with a host graph $T$  being a complete $d$-regular tree  is ${\cal NP}$-hard for every fixed $d \geq 2$ even if the guest graph $G$ is a tree.
\begin{proof}
The problem obviously belongs to ${\cal NP}$. 
The ${\cal NP}$-hardness is proven  by means of a reduction from   the {\em numerical matching with target sums} ({\em NMTS}) problem. 
The {\em NMTS} is ${\cal NP}$-hard  and is defined as follows 
(see {\sc Garey} and {\sc Johnson}~\cite{GareyJohnson:ComputersAndIntractabilityAGuideToTheTheoryOfNPCompleteness}): 
Let   three sets $X = \{x_1, x_2, \ldots, x_n\}$, $Y = \{y_1, y_2, \ldots, y_n\}$ and $Z = \{z_1, z_2, \ldots, z_n\}$ of positive integers, 
with $\sum_{i = 1}^n{z_i} = \sum_{i = 1}^n{x_i} + \sum_{i = 1}^n{y_i}$  with $n\ge 2$ be given. The goal is to decide
 whether  there exist two permutations $(j_1, j_2, \ldots, j_n)$ and $(k_1, k_2, \ldots, k_n)$ of the indices $\{1, 2, \ldots, n\}$, 
such that $z_i = x_{j_i} + y_{k_i}$ for all $i = 1, 2, \ldots, n$ hold.
\medskip

Consider an instance of the NMTS  and a given integer $d \geq 2$. 
We construct an instance $DAPT(G,d)$ of the DAPT as follows. 
Let $l_y \in \nz$ be the smallest natural number  such that $y_i \le d^{l_y-4}$,  
for all $1 \leq i \leq n$. 
 Let $l_x \in \nz$  be the smallest natural number such that $x_i+y_j+(d-1)d^{l_y-4}< d^{l_x-2}$, 
for all $1\le i,j \le n$, 
Finally, let  $l_z$ be the smallest natural number such that $z_i\le d^{l_z}-(d-1)d^{l_z-4}-(d-1)d^{l_z-2}$, for all $1 \leq i \leq n$. 
Let then $l \defeq \max\{l_x,l_y,l_z\}$, and let $L \in \nz$ be the smallest  natural number  such that $n d^l \leq d^{L - 1}$.
Define three  vertex disjoint star graphs  $S^x_i$, $S^y_i$ and  $S^z_i$, for every $1 \leq i \leq n$, with $|V(S^x_i)|=(d-1)d^{l-2}+x_i$, 
$|V(S^y_i)|=(d-1)d^{l-4}+y_i$ and $|V(S^z_i)|=d^l-(d-1)d^{l-4}-(d-1)d^{l-2}-z_i$, for $i=1,2,\ldots,n$. 
Notice that $\sum_{i=1}^n (|V(S^x_i)|+|V(S^y_i)|+|V(S^z_i)|=nd^l$. 
\medskip

Let $v_1(S^x_i)$, $v_1(S^y_i)$ and $v_1(S^z_i)$ be the central vertices of the stars graphs introduced above for $1\le i\le n$, 
respectively. 
Next introduce the  vertices $u_1, u_2, \ldots, u_{\widehat{n}}$, where $\widehat{n} = d^{L-1}  - 1$ and one vertex $v_1$. 
Finally consider a family of  stars $S^{\prime}_i$, $1\le i \le n^{\prime}$, with   $d^l$ vertices each 
and  $n^{\prime} \defeq (d-1)d^{L-1-l}-n\ge 0$. 
 Denote by $v_1(s^{\prime}_i)$,  $1\le i \le n^{\prime}$, their central vertices respectively. 
\medskip

The guest graph $G = (V, E)$ is defined in the following way. 
The vertex set is given by all vertices defined above, 
and the edge set contains all edges contained in the stars $S^x_i$, $S^y_i$ and $S^z_i$,  $1 \leq i \leq n$, and $S^{\prime}_i$, 
$1\le i \le n^{\prime}$, 
together with  edges connecting the vertex $v_1$ with the central vertices  $v_1(S^x_i)$, $v_1(S^y_i)$,  $v_1(S^z_i)$,  $1 \leq i \leq n$, 
and $v_1(S^{\prime}_i)$,  $1\le i \le n^{\prime}$, 
and with all vertices $u_1$, $u_2$, \ldots, $u_{\widehat{n}}$.
Thus we have 
\[ V = \Big [\bigcup_{i = 1}^n [V(S^x_i) \cup V(S^y_i) \cup V(S^z_i)]\Big ] \bigcup  \Big [\bigcup_{i=1}^{n^{\prime}}V(S^{\prime}_i)\Big ] \bigcup
 \{u_1, u_2, \ldots, u_{\widehat{n}}, v_1\} \mbox{ and }\]

\[  E = E1\cup E_2 \cup E_3\, ,  \mbox{ where } E_1=\Big\{(v_1, u_i)\colon  1\le i\le \widehat{n}\Big\}\, ,\] 
\[E_2=\big [\bigcup_{i=1}^{n^{\prime}}E(S^{\prime}_i)\big ]\bigcup \Big\{\big (v_1, v_1(S^{\prime}_i)\big )\colon  1\le i\le n^{\prime}\Big\} \mbox{ and }\]
\[  E_3= \bigcup_{i = 1}^n  \Big [E(S^x_i) \cup E(S^y_i) \cup E(S^z_i) \cup 
\big\{(v_1, v_1(S^x_i)), (v_1, v_1(S^y_i)), (v_1, v_1(S^z_i))\big\}\Big ]  \, .\]
The number of vertices in $G$ is given as $|V(G)|=\sum_{i=1}^n (|V(S^x_i)|+|V(S^y_i)|+|V(S^z_i)|  + \widehat{n}+n^{\prime}d^l+1=  d^L$. 
Thus the host graph is a $d$-regular tree $T$ of height $h = \\ \lceil\log_d{|V(G)|}\rceil = L$. 
\medskip

We show that the  optimal   value $OPT(G,d)$ of the objective function of $DAPT(G,d)$ equals the 
expression in 
(\ref{obj.val:YESinstance}) 
if and only if the corresponding NMTS instance is a YES-instance,  
and this would  complete the $\mathcal{NP}$-hardness proof. 

Prior to showing the above if and only if statement, consider  the optimal arrangement of the substar $G'$ induced  in $G$ 
by $v_1$ 
and its neighbours, i.e.\ by   the following  set of vertices 
$$\{v_1\}\cup  \{u_i\colon 1\le i \le \widehat{n}\}\cup \{v_1(S^{\prime}_i)\colon 1\le i\le n^{\prime}\} \cup \big [\cup_{i=1}^n 
\{v_1(S^{x}_i), v_1(S^{y}_i), v_1(S^{z}_i)\} \big ].$$
Assume w.l.o.g.\ that  the vertex $v_1$ is arranged at the most left leaf of  $T$ 
(i.e.\ the first leaf $b_1$ of $T$ in  the canonical ordering). 
The vertex $v_1$ has $3 n + \widehat{n} + n^{\prime}=  d^{L-1}-1+3n+n^{\prime}>d^{L-1}-1$ 
neighbours (note that $n^{\prime}\ge 0$) and hence $|V(G')|= 3 n + \widehat{n} + n^{\prime}+1=d^{L-1}+3n+n^{\prime}$.
According to Lemma~\ref{lemma:gIsAStar},  
any  arrangement of $G'$   which  arranges the   $d^{L-1}-1$ neighbours   $\{u_1, u_2, \ldots, u_{\widehat{n}}\}$ of $v_1$  at the 
leaves of the leftmost basic subtree of $T$ and the remaining $3n + n^{\prime}$  neighbours at some other (arbitrarily selected) 
leaves of $T$  is optimal.  
In particular such an arrangement would not arrange $v_1(S^x_i)$,  $v_1(S^y_i)$,  $v_1(S^z_i)$, $i=1,2,\ldots,n$, and $v_1(S^{\prime}_i)$, 
$1\le i\le  n^{\prime}$,
at the leaves of the leftmost basic subtree. 
\medskip

Let us now proof the if and only if statement formulated above. 
\begin{description}
\item{The ``if'' statement.} 

Assume that the NMTS instance is  a YES-instance. We show that the equality (\ref{obj.val:YESinstance}) holds. 
 Consider  two permutations $(j_1, j_2, \ldots, j_n)$ and $(k_1, k_2, \ldots, k_n)$ of the indices 
$\{1,2, \ldots, n\}$, such that $z_i = x_{j_i} + y_{k_i}$, for all $i = 1, 2, \ldots, n$. 
Then for all $i\in \{1,2,\ldots,n\}$ we have 
\begin{eqnarray}\nonumber 
|V(S^x_{j_i})|+|V(S^y_{k_i})|+|V(S^z_{i})| &=& (d-1)d^ {l-2}+x_{j_i}+(d-1)d^{l-4}+y_{k_i}+d^l-\\
\label{sumofstars:equ}
(d-1)d^{l-4}-(d-1)d^{l-2}-z_i&=&d^l.
\end{eqnarray} 
Thus, for every $i\in \{1,2,\ldots,n\}$  the disjoint stars graphs $S^x_{j_i}$, $S^y_{k_i}$ and $S^z_i$  can be optimally arranged at 
the leaves of the $i$-th rightmost 
 subtree of  $l$-th order according to Lemma~\ref{lemma:gConsistsOfThreeStars}.
Notice that the assumptions of  the lemma  are fulfilled  because 
 $|V(S^z_i)|  > (d-1)d^{l-1}\ge d^{l-1}$. 
(Indeed,  due to $y_{k_i}\le d^{l-4}$ and 
$x_{j_i}+y_{k_i}+(d-1)d^{y-4}< d^{l-2}$ we get $|V(S^y_{k_i})|+|V(S^x_{j_i})|=(d-1)d^{l-4}+y_{k_i}+(d-1)d^{l-2}+x_{j_i}< d^{l-1}$ 
which together with 
(\ref{sumofstars:equ}) implies $|V(S^z_i)|> (d-1)d^{l-1}\ge d^{l-1}$.) 
Since $nd^l\le d^{L-1}$ the $n$ rightmost 
 subtrees of the $l$-th order are subtrees of the rightmost basic subtree (of height $L-1$). 

It remains to arrange the vertices $v_1,u_1, u_2, \ldots, u_{\widehat{n}}$ and the stars $S^{\prime}_i$, $1\le i\le n^{\prime}$. 
This is done by arranging $v_1, u_1, u_2, \ldots, u_{\widehat{n}}$  at the leaves of the 
leftmost basic subtree and the stars $S^{\prime}_i$, $1\le i\le n^{\prime}$ in one of the $n^{\prime}$ still free  subtrees 
of $l$-th order each. These arrangements are cleary optimal for each of the stars  $S^{\prime}_i$, $1\le i\le n^{\prime}$ 
(recall that they have $d^l$ vertices each). 
Moreoever,  as mentioned above this is also an optimal arrangement  of $G'$. 
Thus this arrangement arranges optimally the  following subgraphs of G: $G'$, $S^{\prime}_i$, 
$1\le i\le n$, 
and the disjoint unions of the triples  $(S^{x}_{j_i},S^{y}_{k_i},S^z_i)$, for $1\le i\le n$, respectively.
  Since the edge sets of the above mentioned graphs yield a partition of the edge set $E(G)$, 
the  arrangement described above is an optimal arrangemnt of $G$.  
The corresponding value $OPT(G,d)$ of the objective function
  is given as  the sum of $OPT(G',d)$, $OPT(S^{\prime}_i,d)$, $1\le i\le n'$, and 
$OPT(S^x_{j_i}\cup S^y_{k_i} \cup S^z_i, d)$, for $1\le i\le n$.
According to  Lemma~\ref{lemma:gIsAStar} and Lemma~\ref{lemma:gConsistsOfThreeStars} we get
\[
OPT(G,d)= 2\Big (Ln^{\prime\prime}-\frac{d^L-1}{d-1}\Big )+2n^{\prime}\Big (ld^l-\frac{d^l-1}{d-1}\Big )+\]
\begin{equation}\label{obj.val:YESinstance}
2\sum_{i=1}^n \Big [l|V(S^z_i)|-\frac{d^{l-1}}{d-1}+(l-1)|V(S^x_{j_i})|-\frac{d^{l-1}-1}{d-1}+(l-3)|V(S^y_{k_i})|-\frac{d^{l-3}-1}{d-1}\Big ]
\, ,
\end{equation}
where $n^{\prime\prime} \defeq |V(G')|=\widehat{n}+n^{\prime}+d^{L-1}$.

\item{The ``only if'' statement.} 
Assume that  $OPT(G,d)$ is given as in (\ref{obj.val:YESinstance}) and consider some optimal arrangement $\phi$ of $G$.  
We show that the corresponding NMTS instance is a YES-instance.
Notice that (\ref{obj.val:YESinstance}) implies 
\begin{eqnarray*} OPT(G,d)&=&OPT(G',d)+\sum_ {i=1}^{n^{\prime}} OPT(S^{\prime}_i,d)+\\
 && \sum_{i=1}^n \big (OPT(S^x_i,d)+OPT(S^y_i,d)+OPT(S^z_i,d\big )\, . \end{eqnarray*}
Thus,  in particular, $\phi$ yields  optimal arrangements of $G'$,   $S^{\prime}_i$, 
$1\le i \le n^{\prime}$, and  $S^x_i$, $S^y_i$ and $S^z_i$, $1\le i\le n$.
Assume w.l.o.g.\ that $\phi$ arranges $v_1$ at the leftmost leaf of $T$. 
Then according to Lemma~\ref{lemma:gIsAStar} $\phi$ arranges neighbours of $v_1$, i.e.\ vertices of $G'$, to each leaf
 of  the leftmost basic subtree of $T$. Thus, none of the neighbours of $v_1$  arranged at 
the leaves of the       leftmost basic subtree of $T$ can be the central vertex of some of the stars $S^{\prime}_i$,  
$1\le i \le n^{\prime}$, or $S^x_i$, $S^y_i$ and $S^z_i$, $1\le i\le n$, because then according to Lemma~\ref{lemma:gIsAStar} 
$\phi$  would not lead to an optimal arrangement of the corresponding star. 
It follows that the vertices arranged at the leaves of the leftmost basic subtree of $T$ are $u_i$, $1\le i\le \widehat{n}$. 

\noindent According to Lemma~\ref{lemma:gIsAStar} $\phi$  arranges each star $S^{\prime}_i$, $1\le i \le n^{\prime}$, 
to the leaves of some subtree of $l$-th order. Hence there remain $(d-1)d^{L-1-l}-n^{\prime}=n$ subtrees of $l$-th order 
at the  leaves of which $\phi$ the stars $S^{x_i}$, $S^{y_i}$ and $S^{z_i}$, $1\le i\le n$.

\noindent  Recall now that $(d-1)d^{l-1}< |V(S^z_i)|$, $(d-1)d^{l-2}<|V(S^x_i)|<d^{l-1}$ and $(d-1)d^{l-4}<|V(S^y_i)|<d^{l-3}$ for $1\le i \le n$. Thus in each of the $n$ remaining free subtrees of $l$-th order  can not be arranged more than 
one of the stars $S^z_i$, $1\le i\le n$, and since there are $n$ such stars to be arranged in $n$ subtrees of $l$-th 
order exactly one them will be arranged in each subtree.  By analogous arguments we get that exactly one of the 
stars $S^x_i$ will be arranged in each of the subtrees of $l$-th order mentioned above, and finally,  exactly one of the 
stars $S^y_i$ will be arranged in each of these subtrees. Thus  in each of the $n$-th subtrees of $l$-th level exactly one 
star $S^z_i$, one star   $S^x_i$ and one star  $S^y_i$ will be arranged. For all $i \in \{1,2,\ldots,n\}$ denote by $j_i$ and 
$k_i$ the indices of the stars arranged together with $S^z_i$ in the same subtree, i.e.\ $S^x_{j_i}$, $S^y_{k_j}$ and $S^z_i$ 
are arranged in the same subtree of $l$-th order, $1\le i\le n$.  Clearly, $(j_1,j_2,\ldots,j_n)$ and $(k_1,k_2,\ldots,k_n)$
are permutations of $\{1,2,\ldots,n\}$, respectively. Since the stars $S^x_i$, $S^y_i$ and $S^z_i$, $1\le i \le n$,
 have $nd^l$ vertices alltogether, which is  the number of leaves of  the $n$ subtrees of $l$-th level, the equality 
\begin{equation} \label{noinstance:equ}
|V(S^x_{j_i})| + |V(S^y_{k_i})|+|V(S^z_i)|=d^l 
\end{equation}
must hold, for all $1\le i\le n$.  
By substituting the cardinalities of the vertex sets of the stars in (\ref{noinstance:equ}) we get $x_{j_i}+y_{k_i}-z_i=0$,
 for all $1\le i\le n$, and this completes the proof.
 
\end{description}
\end{proof}
\end{theorem}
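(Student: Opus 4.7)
The plan is to reduce from the Numerical Matching with Target Sums (NMTS) problem, which is $\mathcal{NP}$-hard~\cite{GareyJohnson:ComputersAndIntractabilityAGuideToTheTheoryOfNPCompleteness}. Membership of DAPT in ${\cal NP}$ is immediate: an arrangement is a polynomial-size certificate whose objective value can be computed in polynomial time using Observation~\ref{observation:distance}. The overall strategy is to encode each triple $(x_i,y_i,z_i)$ of an NMTS instance by three disjoint star graphs whose sizes are engineered so that the three stars exactly fill a subtree of $l$-th order of the host if and only if $z_i = x_{j_i} + y_{k_i}$ for some pairing, and to then show that any optimal arrangement must respect this triple structure.

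Given an NMTS instance $(X,Y,Z)$ with $|X|=|Y|=|Z|=n$ and a fixed $d\ge 2$, I would build $G$ as follows. Choose exponents $l_y, l_x, l_z$ minimal with $y_i\le d^{l_y-4}$, $x_i+y_j+(d-1)d^{l_y-4}<d^{l_x-2}$, and $z_i\le d^{l_z}-(d-1)d^{l_z-4}-(d-1)d^{l_z-2}$, and set $l\defeq\max(l_x,l_y,l_z)$. For each $i$ introduce three vertex-disjoint stars $S^x_i, S^y_i, S^z_i$ of sizes $(d-1)d^{l-2}+x_i$, $(d-1)d^{l-4}+y_i$, and $d^l-(d-1)d^{l-2}-(d-1)d^{l-4}-z_i$. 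The three size ``scales'' are thereby cleanly separated: $(d-1)d^{l-4}<|V(S^y_i)|<d^{l-3}$, $(d-1)d^{l-2}<|V(S^x_i)|<d^{l-1}$, and $|V(S^z_i)|>(d-1)d^{l-1}$. Pick $L$ minimal with $nd^l\le d^{L-1}$; add a hub vertex $v_1$ joined to every star center, hang $\widehat n\defeq d^{L-1}-1$ pendant vertices $u_j$ off $v_1$, and pad with $n'\defeq (d-1)d^{L-1-l}-n$ further vertex-disjoint ``filler'' stars of size $d^l$, each of whose centers is also attached to $v_1$. The total vertex count is exactly $d^L$, so the host tree $T$ has height $L$.

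For the forward (``if'') direction, given witnessing permutations $(j_i),(k_i)$ with $z_i=x_{j_i}+y_{k_i}$, the identity $|V(S^x_{j_i})|+|V(S^y_{k_i})|+|V(S^z_i)|=d^l$ holds. I would place each triple into the leaves of a distinct subtree of $l$-th order optimally via Lemma~\ref{lemma:gConsistsOfThreeStars} (whose hypothesis $|V(S^z_i)|\ge d^{l-1}$ is exactly the scale-gap inequality), place each filler star optimally via Lemma~\ref{lemma:gIsAStar}, and arrange the hub-star $G'$ centered at $v_1$ by Lemma~\ref{lemma:gIsAStar}, putting its $\widehat n$ pendants $u_j$ into the leftmost basic subtree. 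Since $E(G)$ partitions into the edge sets of $G'$, of the fillers, and of the triples, the total objective value is the sum of the three locally optimal values, giving an explicit closed-form target.

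The hard part, and the main obstacle, is the converse: any arrangement attaining this target value must encode a valid NMTS solution. I would first invoke Lemma~\ref{lemma:gIsAStar} applied to $G'$ to show that the basic subtree containing $v_1$ must contain only the pendants $u_j$ (otherwise a star center landing there would prevent its own star from being optimally arranged), and to show that each filler $S'_i$ fills its own subtree of $l$-th order. This leaves exactly $n$ subtrees of $l$-th order to host the $3n$ triple stars. Here the scale-gap inequalities become decisive: since $|V(S^z_i)|>(d-1)d^{l-1}$ no subtree of $l$-th order can host two $z$-stars, so exactly one $S^z_i$ lands in each remaining subtree; the sandwich bounds on $|V(S^x_i)|$ and $|V(S^y_i)|$ similarly force exactly one $x$-star and one $y$-star per subtree. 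The volume equation $|V(S^x_{j_i})|+|V(S^y_{k_i})|+|V(S^z_i)|=d^l$ then simplifies to $z_i=x_{j_i}+y_{k_i}$, delivering the NMTS witness. The real technical work lies in choosing $l_x,l_y,l_z$ so that the three scales are unambiguously separated and no unintended combination of stars from different triples can sum to $d^l$; this is routine arithmetic but must be verified carefully for the argument to close.
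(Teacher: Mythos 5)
Your proposal follows essentially the same route as the paper's own proof: the same reduction from NMTS, the same choice of $l_x,l_y,l_z,l,L$, identical star sizes for $S^x_i$, $S^y_i$, $S^z_i$, the same hub-plus-pendants-plus-filler padding to reach $d^L$ vertices, and the same two-directional argument via Lemma~\ref{lemma:gIsAStar} and Lemma~\ref{lemma:gConsistsOfThreeStars} with the scale-gap inequalities forcing one star of each type per subtree of $l$-th order. The construction and both directions of the equivalence match the paper's argument, so the proposal is correct.
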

\section{Final notes, conclusions and outlook}
	\label{section:finalNotesConclusionsAndOutlook}
	This paper deals with a special case of the data arrangement problem on regular trees (DAPT), namely 
 with the case where both the guest and the host graph are binary regular trees of  heights $h_G$ and $h = h_G + 1$, 
respectively. 
Some basic properties of the problem are identified  and an approximation algorithm with approximation ratio $1.015$ 
is proposed. 
Moreover, we provide closed formulas for the arrangement generated by the approximation algorithm  mentioned above 
and for its corresponding objective  function value. 

The analysis of the approximation algorithm and the estimation of the   approximation ratio involve 
 a special case of  the {\em $k$-balanced partitioning problem} 
({\em $k$-BPP})  as an auxiliary problem. 
More precisely we consider the {\em $k$-BPP} where the input graph is  a binary regular tree 
for a particular choice of $k$, namely for $k$ being a power of $2$,  and give (a lower bound for 
the) optimal value its objective function.

Further  we investigate the relationship between the considered particular cases of the DAPT and the $k$-BPP
 and   derive a lower bound for  the value of the objective function of the $DAPT(G,2)$ by solving $h_G$ instances 
of the {\em $2^{k^{\prime}}$-BPP}, where $h_G$ is the height of the guest graph $G$ and  $k^{\prime}=1,2\ldots,h_G$.
This lower bound leads then to the above mentioned approximation ratio. 

It would be  interesting to investigate whether some alternative analysis of the proposed algorithm for the $DAPT(G,2)$
 could lead to a 
better approximation ratio.
Numerical results provide some evidence that the answer to this question might be ``yes'' and suggest an empirical 
approximation  ratio of $1.0098$.

 Finally we settle an open question from the literature and prove that the DAPT with a host graph being a $d$-regular tree,
 $d\ge 2$, $d\in \nz$, is 
${\cal NP}$-hard even if the guest graph is a tree. 

The complexity of  the DAPT in the case where both the guest and the host
graph are binary regular trees remains an open question for further research. 
Other open questions  concerns the more general cases of the DAPT where  the guest 
graph $G$ is a $d$-regular tree and the host graph $T$ is a $d'$-regular tree 
with $d=d'\ge 3$ or $d\neq d'$.

\section*{Acknowledgements}
	\label{section:acknowledgements}
	The research was funded by the Austrian Science Fund (FWF): P23829.
	
\medskip

\bibliographystyle{abbrv}
\bibliography{TheDataArrangementProblemOnBinaryTrees}

\newpage

\appendix

\section{Appendix}
	\label{section:appednix}
	
	\subsection{\boldmath Arrangements $\phi_A$ obtained from Algorithm~\ref{algorithm:arrangementPhiU} for different heights $h_G$ of the guest graph $G$\unboldmath}
		\label{subsection:section:appednix:arrangementsPhiUObtainedFromAlgorithmForDifferentHeightsHG}

		\begin{multicols}{2}
			\vspace*{-1.0cm}
			\begin{figurehere}
				\centering
				\begin{comment:appendix}
					\begin{tikzpictureGuestGraph}
						\node[circle, draw=white!50, fill=white!100, thick, inner sep=0pt, minimum size=3mm] (node1w) at (0.25, 1.00) {};
			
						\node[circle, draw=white!100, fill=white!50, thick, inner sep=0pt, minimum size=5mm] (node2w) at (0.00, 0.00) {\color{white} 1};
						\node[circle, draw=white!100, fill=white!100, thick, inner sep=0pt, minimum size=5mm] (node3w) at (0.50, 0.00) {};
			
						\draw [-, white!50] (node1w) to (node2w);
						\draw [-, white!50] (node1w) to (node3w);

						\node[nodeBlack] (node1) at (0.00 + 0.25, 0.00) {1};
					\end{tikzpictureGuestGraph}
				\end{comment:appendix}
				\caption{Guest graph $G = (V, E)$ (binary regular tree of height $h_G = 0$).}
				\label{figure:guestGraphGForHG0}
			\end{figurehere}
			
			$ $
			
			\begin{figurehere}
				\centering
				\begin{comment:appendix}
					\begin{tikzpictureHostGraph}
						\node[otherNode] (node1) at (0.25, 1.00) {};
			
						\node[leafUsedBlack] (node2) at (0.00, 0.00) {1};
						\node[leafUnused] (node3) at (0.50, 0.00) {};
			
						\draw [-, black!50] (node1) to (node2);
						\draw [-, black!50] (node1) to (node3);
					\end{tikzpictureHostGraph}
				\end{comment:appendix}
				\caption{Arrangement $\phi_A$ obtained from Algorithm~\ref{algorithm:arrangementPhiU} for the guest graph 
	of height $h_G=1$ depicted in Figure~\ref{figure:guestGraphGForHG0}. Its objective function value is $OV(G, 2, \phi_A) = 0$.}
				\label{figure:arrangementPhiUForHG0}
			\end{figurehere}
		\end{multicols}

		\begin{multicols}{2}
			\vspace*{-0.95cm}
			\begin{figurehere}
				\centering
				\begin{comment:appendix}
					\begin{tikzpictureGuestGraph}
						\node[circle, draw=white!50, fill=white!100, thick, inner sep=0pt, minimum size=3mm] (node1w) at (0.75, 2.00) {};
			
						\node[circle, draw=white!50, fill=white!100, thick, inner sep=0pt, minimum size=3mm] (node2w) at (0.25, 1.00) {};
						\node[circle, draw=white!50, fill=white!100, thick, inner sep=0pt, minimum size=3mm] (node3w) at (1.25, 1.00) {};
			
						\node[circle, draw=white!100, fill=white!50, thick, inner sep=0pt, minimum size=5mm] (node4w) at (0.00, 0.00) {\color{white} 2};
						\node[circle, draw=white!100, fill=white!50, thick, inner sep=0pt, minimum size=5mm] (node5w) at (0.50, 0.00) {\color{white} 1};
						\node[circle, draw=white!100, fill=white!50, thick, inner sep=0pt, minimum size=5mm] (node6w) at (1.00, 0.00) {\color{white} 3};
						\node[circle, draw=white!100, fill=white!100, thick, inner sep=0pt, minimum size=5mm] (node7w) at (1.50, 0.00) {};
			
						\draw [-, white!50] (node1w) to (node2w);
						\draw [-, white!50] (node1w) to (node3w);
			
						\draw [-, white!50] (node2w) to (node4w);
						\draw [-, white!50] (node2w) to (node5w);
						\draw [-, white!50] (node3w) to (node6w);
						\draw [-, white!50] (node3w) to (node7w);
						
						\draw [-, white] (node5w) to [out=-90,in=-90] (node4w);
						\draw [-, white] (node5w) to [out=-90,in=-90] (node6w);

						\node[nodeBlack] (node1) at (0.25 + 0.50, 1.00) {1};
			
						\node[nodeBlack] (node2) at (0.00 + 0.50, 0.00) {2};
						\node[nodeRed] (node3) at (0.50 + 0.50, 0.00) {3};
						
						\draw (node1) to (node2);
						\draw (node1) to (node3);
					\end{tikzpictureGuestGraph}
				\end{comment:appendix}
				\caption{Guest graph $G = (V, E)$ (binary regular tree of height $h_G = 1$).}
				\label{figure:guestGraphGForHG1}
			\end{figurehere}

			$ $
			
			\begin{figurehere}
				\centering
				\begin{comment:appendix}
					\begin{tikzpictureHostGraph}
						\node[otherNode] (node1) at (0.75, 2.00) {};
			
						\node[otherNode] (node2) at (0.25, 1.00) {};
						\node[otherNode] (node3) at (1.25, 1.00) {};
			
						\node[leafUsedBlack] (node4) at (0.00, 0.00) {2};
						\node[leafUsedBlack] (node5) at (0.50, 0.00) {1};
						\node[leafUsedRed] (node6) at (1.00, 0.00) {3};
						\node[leafUnused] (node7) at (1.50, 0.00) {};
			
						\draw [-, black!50] (node1) to (node2);
						\draw [-, black!50] (node1) to (node3);
			
						\draw [-, black!50] (node2) to (node4);
						\draw [-, black!50] (node2) to (node5);
						\draw [-, black!50] (node3) to (node6);
						\draw [-, black!50] (node3) to (node7);
						
						\draw [-] (node5) to [out=-90,in=-90] (node4);
						\draw [-] (node5) to [out=-90,in=-90] (node6);
					\end{tikzpictureHostGraph}
				\end{comment:appendix}
				\caption{Arrangement $\phi_A$ obtained from Algorithm~\ref{algorithm:arrangementPhiU} for the guest graph 
	of height $h_G=1$ depicted in Figure~\ref{figure:guestGraphGForHG1}. Its objective function value is $OV(G, 2, \phi_A) = 6$.}
				\label{figure:arrangementPhiUForHG1}
			\end{figurehere}
		\end{multicols}

		\begin{multicols}{2}
			\vspace*{-1.02cm}
			\begin{figurehere}
				\centering
				\begin{comment:appendix}
					\begin{tikzpictureGuestGraph}
						\node[circle, draw=white!50, fill=white!100, thick, inner sep=0pt, minimum size=3mm] (node1w) at (1.75, 3.00) {};
			
						\node[circle, draw=white!50, fill=white!100, thick, inner sep=0pt, minimum size=3mm] (node2w) at (0.75, 2.00) {};
						\node[circle, draw=white!50, fill=white!100, thick, inner sep=0pt, minimum size=3mm] (node3w) at (2.75, 2.00) {};
			
						\node[circle, draw=white!50, fill=white!100, thick, inner sep=0pt, minimum size=3mm] (node4w) at (0.25, 1.00) {};
						\node[circle, draw=white!50, fill=white!100, thick, inner sep=0pt, minimum size=3mm] (node5w) at (1.25, 1.00) {};
						\node[circle, draw=white!50, fill=white!100, thick, inner sep=0pt, minimum size=3mm] (node6w) at (2.25, 1.00) {};
						\node[circle, draw=white!50, fill=white!100, thick, inner sep=0pt, minimum size=3mm] (node7w) at (3.25, 1.00) {};
			
						\node[circle, draw=white!100, fill=white!50, thick, inner sep=0pt, minimum size=5mm] (node8w) at (0.00, 0.00) {\color{white} 4};
						\node[circle, draw=white!100, fill=white!50, thick, inner sep=0pt, minimum size=5mm] (node9w) at (0.50, 0.00) {\color{white} 2};
						\node[circle, draw=white!100, fill=white!50, thick, inner sep=0pt, minimum size=5mm] (node10w) at (1.00, 0.00) {\color{white} 5};
						\node[circle, draw=white!100, fill=white!50, thick, inner sep=0pt, minimum size=5mm] (node11w) at (1.50, 0.00) {\color{white} 1};
						\node[circle, draw=white!100, fill=white!50, thick, inner sep=0pt, minimum size=5mm] (node12w) at (2.00, 0.00) {\color{white} 6};
						\node[circle, draw=white!100, fill=white!50, thick, inner sep=0pt, minimum size=5mm] (node13w) at (2.50, 0.00) {\color{white} 3};
						\node[circle, draw=white!100, fill=white!50, thick, inner sep=0pt, minimum size=5mm] (node14w) at (3.00, 0.00) {\color{white} 7};
						\node[circle, draw=white!100, fill=white!100, thick, inner sep=0pt, minimum size=5mm] (node15w) at (3.50, 0.00) {};
			
						\draw [-, white!50] (node1w) to (node2w);
						\draw [-, white!50] (node1w) to (node3w);
			
						\draw [-, white!50] (node2w) to (node4w);
						\draw [-, white!50] (node2w) to (node5w);
						\draw [-, white!50] (node3w) to (node6w);
						\draw [-, white!50] (node3w) to (node7w);
			
						\draw [-, white!50] (node4w) to (node8w);
						\draw [-, white!50] (node4w) to (node9w);
						\draw [-, white!50] (node5w) to (node10w);
						\draw [-, white!50] (node5w) to (node11w);
						\draw [-, white!50] (node6w) to (node12w);
						\draw [-, white!50] (node6w) to (node13w);
						\draw [-, white!50] (node7w) to (node14w);
						\draw [-, white!50] (node7w) to (node15w);
						
						\draw [-, white] (node11w) to [out=-90,in=-90] (node9w);
						\draw [-, white] (node11w) to [out=-90,in=-90] (node13w);
			
						\draw [-, white] (node9w) to [out=-90,in=-90] (node8w);
						\draw [-, white] (node9w) to [out=-90,in=-90] (node10w);
						\draw [-, white] (node13w) to [out=-90,in=-90] (node12w);
						\draw [-, white] (node13w) to [out=-90,in=-90] (node14w);
					
						\node[nodeBlackDashed] (node1) at (0.75 + 1.00, 2.00) {1};
			
						\node[nodeBlack] (node2) at (0.25 + 1.00, 1.00) {2};
						\node[nodeRed] (node3) at (1.25 + 1.00, 1.00) {3};
			
						\node[nodeBlack] (node4) at (0.00 + 1.00, 0.00) {4};
						\node[nodeBlackDashed] (node5) at (0.50 + 1.00, 0.00) {5};
						\node[nodeRed] (node6) at (1.00 + 1.00, 0.00) {6};
						\node[nodeRedDashed] (node7) at (1.50 + 1.00, 0.00) {7};
						
						\draw (node1) to (node2);
						\draw (node1) to (node3);
			
						\draw (node2) to (node4);
						\draw (node2) to (node5);
						\draw (node3) to (node6);
						\draw (node3) to (node7);
					\end{tikzpictureGuestGraph}
				\end{comment:appendix}
				\caption{Guest graph $G = (V, E)$ (binary regular tree of height $h_G = 2$).}
				\label{figure:guestGraphGForHG2}
			\end{figurehere}
			
			$ $

			\begin{figurehere}
				\centering
				\begin{comment:appendix}
					\begin{tikzpictureHostGraph}
						\node[otherNode] (node1) at (1.75, 3.00) {};
			
						\node[otherNode] (node2) at (0.75, 2.00) {};
						\node[otherNode] (node3) at (2.75, 2.00) {};
			
						\node[otherNode] (node4) at (0.25, 1.00) {};
						\node[otherNode] (node5) at (1.25, 1.00) {};
						\node[otherNode] (node6) at (2.25, 1.00) {};
						\node[otherNode] (node7) at (3.25, 1.00) {};
			
						\node[leafUsedBlack] (node8) at (0.00, 0.00) {4};
						\node[leafUsedBlack] (node9) at (0.50, 0.00) {2};
						\node[leafUsedBlackDashed] (node10) at (1.00, 0.00) {5};
						\node[leafUsedBlackDashed] (node11) at (1.50, 0.00) {1};
						\node[leafUsedRed] (node12) at (2.00, 0.00) {6};
						\node[leafUsedRed] (node13) at (2.50, 0.00) {3};
						\node[leafUsedRedDashed] (node14) at (3.00, 0.00) {7};
						\node[leafUnused] (node15) at (3.50, 0.00) {};
			
						\draw [-, black!50] (node1) to (node2);
						\draw [-, black!50] (node1) to (node3);
			
						\draw [-, black!50] (node2) to (node4);
						\draw [-, black!50] (node2) to (node5);
						\draw [-, black!50] (node3) to (node6);
						\draw [-, black!50] (node3) to (node7);
			
						\draw [-, black!50] (node4) to (node8);
						\draw [-, black!50] (node4) to (node9);
						\draw [-, black!50] (node5) to (node10);
						\draw [-, black!50] (node5) to (node11);
						\draw [-, black!50] (node6) to (node12);
						\draw [-, black!50] (node6) to (node13);
						\draw [-, black!50] (node7) to (node14);
						\draw [-, black!50] (node7) to (node15);
						
						\draw [-] (node11) to [out=-90,in=-90] (node9);
						\draw [-] (node11) to [out=-90,in=-90] (node13);
			
						\draw [-] (node9) to [out=-90,in=-90] (node8);
						\draw [-] (node9) to [out=-90,in=-90] (node10);
						\draw [-] (node13) to [out=-90,in=-90] (node12);
						\draw [-] (node13) to [out=-90,in=-90] (node14);
					\end{tikzpictureHostGraph}
				\end{comment:appendix}
				\caption{Arrangement $\phi_A$ obtained from Algorithm~\ref{algorithm:arrangementPhiU} for the guest graph 
	of height $h_G=2$ depicted in Figure~\ref{figure:guestGraphGForHG2}. Its objective function value is $OV(G, 2, \phi_A) = 22$.}
				\label{figure:arrangementPhiUForHG2}
			\end{figurehere}
		\end{multicols}
\end{document}